\newtheorem{thm}{Theorem}[section]
\newtheorem{lem}[thm]{Lemma}
\newtheorem{cor}[thm]{Corollary}
\theoremstyle{definition}
\theoremstyle{remark}
\newtheorem{rem}[thm]{Remark}
\theoremstyle{Conjecture}
\newtheorem{conj}[thm]{Conjecture}
\numberwithin{equation}{section}
\newcommand{\R}{\mathbb{R}}
\begin{document}

\title{Energy measures of harmonic functions on the Sierpi\'nski Gasket}
\author[Renee Bell]{Renee Bell$^1$}
\address{Renee Bell\\
         Department of Mathematics\\
         University of California, Berkeley\\
         Evans Hall\\
         Berkeley, CA 94720-3840\\
         U.S.A}
\email{rbell916@berkeley.edu}
\thanks{$^1$Research supported by the National Science Foundation through the Research Experiences for Undergraduates(REU) Program at Cornell.}

\author[Ching-Wei Ho]{Ching-Wei Ho}
\address{Ching-Wei Ho\\
         Department of Mathematics\\
         Lady Shaw Building\\
         The Chinese University of Hong Kong\\
         Shatin, Hong Kong}
\email{s0962318@cuhk.edu.hk}

\author[Robert S. Strichartz]{Robert S. Strichartz$^2$}
\address{Robert S. Strichartz\\
         Department of Mathematics\\
         Malott Hall\\
         Cornell University\\
         Ithaca, NY 14853\\
         U.S.A}
\email{str@math.cornell.edu}
\thanks{$^2$Research supported by the National Science Foundation grant DMS-1162045}

\subjclass[2010]{28A80}
\keywords{Sierpi\'nski gasket, energy measures, Kusuoka measure, energy Laplacian, Radon-Nikodym derivatives}

%\date{Sept 6,2012}

\maketitle
%\markboth{}{}

\begin{abstract}
We study energy measures on SG based on harmonic functions. We characterize the positive energy measures through studying the bounds of Radon-Nikodym derivatives with respect to the Kusuoka measure. We prove a limited continuity of the derivative on the graph $V_*$
and express the average value of the derivative on a whole cell as a weighted average of the values on the boundary vertices.
We also prove some characterizations and properties of the weights.
%For a function $u$ on the Sierpinski gasket, whenever its standard Laplacian $\Delta u$ exists as a function, $\Delta (u^2)$ does not exist as a function. The energy Laplacian defined through energy measures seems to behave better from this point of view. We characterize the positive energy measures through studying the bounds of Radon-Nikodym derivatives with respect to the Kusuoka measure. We show that the derivatives restricted to the graph $V_*$ are bounded and continuous on the edges of every cell. We also relate the average value of each derivative on a whole cell to the value on the boundary vertices by showing that the former is a weighted average of the latter, and we prove some characterizations and properties of the weights.
\end{abstract}

\section{introduction}
In the development of analysis on fractals, there are three basic concepts: energy, measure and Laplacian. For $SG$, which may be regarded as the ``poster child" for the theory, Kigami has developed an elegant construction of a self-similar Laplacian based on a self-similar energy $\mathcal{E}$ and self-similar measure $\mu$ via an analog of integration by parts. This theory is briefly sketched below, and is described in complete detail in the book \cite{Bob}. This standard Laplacian $\Delta$ is not the only Laplacian on $SG$ that has been widely studied. A competing energy Laplacian $\Delta_\nu$, based on the same energy but using the Kusuoka measure $\nu$ in place of $\mu$, has received a lot of attention in recent years(\cite{Kajino, Kajino2, Kigami, Teplyaev, Teplyaev2}). It has two striking advantages over $\Delta$, namely the existence of a Leibniz-type product formula and the fact that the associated heat kernel satisfies Gaussian type estimates. The main drawback so far has been that the Kusuoka measure is not algorithmically transparent. The purpose of this paper is to eliminate this drawback. We will provide algorithms that describe the nature of this measure and more generally the family of energy measures $\nu_h$ based on harmonic functions $h$, and the Radon-Nikodym derivatives $\frac{d\nu_h}{d\nu}$. Some of the results of this paper, especially Theorem \ref{thm:LimitedContinuity}, may also be derived from \cite{Kigami}.

%?? We must include some basics about SG and harmonic functions and the graph $V_*$. We also have to introduce E matrices and diagonalization, limit. definition of laplacian, energy measure(and absolute continuity). $\nu_h + \nu_h^\perp=c\nu$ for some $c > 0$. Introduce the sections.\\
The Sierpi\'nski gasket, or simply SG, is the unique nonempty compact set satisfying
$$SG=\bigcup_{i=0}^2 F_i SG$$
where $F_i=\frac{1}{2}(x+q_i)$ and $\{q_i\}_{i=0}^2$ are vertices of an equilateral triangle in counter-clockwise direction. $\{q_i\}_{i=1}^2$ are called the boundary points of SG. If $w=(w_1,\cdots, w_n)$ is a finite word, we define the mapping $F_w = F_{w_1}\circ \cdots \circ F_{w_n}$ where $|w|$ is defined to be the length of the word. Define $V_0=\{q_i\}$, $V_n = \bigcup F_i \left(V_{n-1}\right)$ and $V_*=\bigcup_{n=1}^\infty V_n$. For the boundary of a cell $C=F_w SG$ , denoted by $\partial C$, we mean the three vertices $\{F_w(q_i)\}_{i=0}^2$ of the cell.\\

For functions $u$, $v$ defined on SG, we define the energy on level $m$ by
\begin{equation}\
\label{eq:EnergyOnLvlm}
\mathcal{E}_m(u,v)=\left(\frac{5}{3}\right)^m\sum_{|w|=m}\sum_{i<j}(u\circ F_w(q_i)-u\circ F_w(q_j))(v\circ F_w(q_i)-v\circ F_w(q_j)).
\end{equation}
and energy $\mathcal{E}(u,v)=\lim\limits_{m\to\infty}\mathcal{E}_m(u,v)$ provided that $\mathcal{E}(u,u)$ and $\mathcal{E}(v,v)$ are finite. For each cell $F_w SG$, we define the energy of $u$, $v$ on this cell by
$$\mathcal{E}_{F_w SG}(u,v)=\left(\frac{5}{3}\right)^{|w|}\mathcal{E}(u\circ F_w, v\circ F_w).$$
Observe that the energy is non-negative if $u=v$, but in general it is not. In the case $u=v$, we write $\mathcal{E}(u)$ instead of $\mathcal{E}(u,u)$. We define a function $h$ to be harmonic if it minimizes the energy from level $m$ to level $m+1$, as defined in \eqref{eq:EnergyOnLvlm}. A simple computation shows that this minimization occurs when $\mathcal{E}_m(h)=\mathcal{E}_{m+1}(h)$ at which to each cell $F_w SG$, $h(F_{w0}(q_1))=\frac{2}{5}h(F_w(q_0))+\frac{2}{5}h(F_w(q_1))+\frac{1}{5}h(F_w(q_2))$. Equivalently, a function $h$ on SG is harmonic if the sequence, taking $u=h=v$ in \eqref{eq:EnergyOnLvlm}, is a constant sequence, i.e. constantly $\sum_{i<j}(h(q_i)-h(q_j))^2$. Given three values on the boundary, in order to have a constant sequence in \eqref{eq:EnergyOnLvlm}, all the other values are completely determined. Consequently, all the harmonic functions form a three-dimensional space and hence any values on the boundary of a cell can extend to a unique harmonic function on SG. A harmonic function $h$ is said to be symmetric(resp. skew-symmetric) if $h(q_i)=h(q_{i+1})$(resp. $h(q_i)=-h(q_{i+1})$) for some $i$. Harmonic functions characterized by $h_i(q_j)=\delta_{ij}$ are three symmetric functions and the notation $h_i$ will be used throughout this paper. On the space $\mathcal{H}$ of harmonic functions modulo constants, $(\mathcal{H},\mathcal{E}$) is a two-dimensional inner product space. A harmonic function $h$ is said to be symmetric (resp. skew-symmetric) in $\mathcal{H}$ if there exists a constant $c$ and a symmetric (resp. skew-symmetric) harmonic function $f$ such that $h=c+f$. Explicitly, for a symmetric harmonic function $h$, we mean $h=ah_i+c$ for some $a,c\in \R$ and some $h_i$; and for a skew-symmetric harmonic function $h$, we mean $a(h_i-h_j)+c$ for some $a>0$ and $i\not=j$. To each harmonic $h$, we write $h^\perp$ the harmonic function having the same energy as $h$ and orthogonal to $h$ unless we specify $h^\perp$ ``orthonormal to $h$" in which case we mean having energy $1$ and orthogonal to $h$.\\

%The introduction seems to be too long. See whether this part is needed.
%%%%%%%%%%%%%%%%%%%%%%%%%%%%%%%%%%%%%%%%%%%%%%%%%%%%%%%%%%%%%%%%%%%%%%%%%%%%%
The standard measure $\mu$ is a self-similar measure characterized by $\mu(F_w F_i SG)=\frac{1}{3}\mu(F_w SG)$ for every cell $C$. The standard Laplacian $\Delta u$ of $u$ is a continuous function satisfying
\begin{equation}
\label{eq:StdLap}
-\mathcal{E}(u,v)=\int \Delta u v d\mu
\end{equation}
for every $v$ vanishing at the boundary with $\mathcal{E}(v)<\infty$. Harmonic functions on SG are precisely those whose Laplacian is zero. However, for a function $u$ whenever $\Delta u$ is defined as a function, $\Delta (u^2)$ is not defined as a function. We can view $\Delta u$ as a measure $m = \Delta u d\mu$ and $\Delta u$ exists as a function if and only if this measure is absolutely continuous with respect to the standard measure. The carr\'e du champs formula given by, for all $f, u, v$ of finite energy, $\int_{SG} f d\nu_{u,v}=\frac{1}{2}\mathcal{E}(fu,v)+\frac{1}{2}\mathcal{E}(u,fv)-\frac{1}{2}\mathcal{E}(f,uv)$ shows that $$\mathcal{E}(u^2,v)=-2\int (u\Delta u)vd\mu - 2\int vd\nu_u$$
for all $u, v$ with $\Delta u$ as a function. Thus if $\Delta u$ exists as a function and if we first view $\Delta(u^2)$ as a measure,
$$\Delta(u^2)=2u\Delta u d\mu+2d\nu_u$$
then $\Delta(u^2)$ would exist as a function if this measure were absolutely continuous with respect to the standard measure $\mu$, but this is almost never the case \cite{Bassat}.
%%%%%%%%%%%%%%%%%%%%%%%%%%%%%%%%%%%%%%%%%%%%%%%%%%%%%%%%%%%%%%%%%%%%%%%%%%%%%%%

Suppose that $u$, $v$ are functions of finite energy. We can define a signed measure $\nu_{u,v}$ by, on each cell,
$$\nu_{u,v}(C)=\mathcal{E}_C(u,v).$$
As usual, if it happens that $u=v$, we write $\nu_u$ instead. In this paper, we will restrict the attention to both $u$, $v$ being harmonic. The energy measures of harmonic functions form a three-dimensional space. The Kusuoka measure $\nu$ is defined to be $\nu_0+\nu_1+\nu_2$. An easy computation shows that for any harmonic function $h$, we have $\nu_h+\nu_{h^\perp}=c\nu$ where $c=\frac{1}{3}\mathcal{E}(h)$. Now, we define the energy Laplacian $\Delta_\nu u$ of $u$ by, for every finite energy $v$ vanishing on the boundary,
\begin{equation}
\label{eq:EnLap}
-\mathcal{E}(u,v)=\int (\Delta_\nu u) v d\nu.
\end{equation}
By Theorem 5.3.1 of \cite{Bob}, every energy measure of harmonic functions is absolutely continuous with respect to the Kusuoka measure $\nu$ and it makes sense to consider the Radon-Nikodym derivatives. For the energy measures of the symmetric harmonic functions, we will denote these by $\nu_i$ instead of $\nu_{h_i}$. $\{\nu_i\}$ forms a basis of the space of all energy measures of harmonic functions. Since $h_0+h_1+h_2=1$, we have $\nu_{h_0,h_j}+\nu_{h_1,h_j}+\nu_{h_2,h_j}=\nu_{1,h_j}=0$ for $j=0,1,2$. Since $\nu_{f,g}=\nu_{g,f}$ for all $f$, $g$, this gives us three equations that we solve to obtain
\begin{equation}
\label{eq:EnMeaRep}
\begin{split}
\nu_{h_0,h_1}&=\frac{1}{2}(-\nu_0-\nu_1+\nu_2);\\
\nu_{h_0,h_2}&=\frac{1}{2}(-\nu_0+\nu_1-\nu_2);\\
\nu_{h_1,h_2}&=\frac{1}{2}(\hspace{8pt}\nu_0-\nu_1-\nu_2).
\end{split}
\end{equation}
We will keep using the notation $\nu_{f,g}$ for any energy measure of harmonic functions without mentioning $f$ and $g$.

By \cite{EMatrix}, we have that there exist matrices $E_i$ such that for every cell $C$,
\begin{equation*}
\left(\begin{array}{c}
\nu_{f,g}(F_{i0}C)\\
\nu_{f,g}(F_{i1}C)\\
\nu_{f,g}(F_{i2}C)
\end{array}\right)
= E_i
\left(\begin{array}{c}
\nu_{f,g}(F_{0}C)\\
\nu_{f,g}(F_{1}C)\\
\nu_{f,g}(F_{2}C)
\end{array}\right).
\end{equation*}
The $E_i$'s are all diagonalizable, so it is easy to calculate $E_i^m$. Explicitly, after diagonalization, we have\\
%Please show E_i's here.
\begin{equation}
\label{eq:DiagE}
\begin{split}
%E_0
E_0 &= \left(\begin{array}{ccc}
\frac{1}{7}&3&0\\[1pt]  1&1&-1\\[1pt]  1&1&1
\end{array}\right)
\left(\begin{array}{ccc}
\frac{1}{15}&0&0\\[1pt]  0&\frac{3}{5}&0\\[1pt]  0&0&\frac{1}{5}
\end{array}\right)
\left(\begin{array}{ccc}
\frac{-7}{20}&\frac{21}{40}&\frac{21}{40}\\[1pt]
\frac{7}{20}&\frac{-1}{40}&\frac{-1}{40}\\[1pt]
0&-\frac{1}{2}&\frac{1}{2}
\end{array}\right);\\
%E_1
E_1 &= \left(\begin{array}{ccc}
1&1&-1\\[1pt]  \frac{1}{7}&3&0\\[1pt]  1&1&1
\end{array}\right)
\left(\begin{array}{ccc}
\frac{1}{15}&0&0\\[1pt]  0&\frac{3}{5}&0\\[1pt]  0&0&\frac{1}{5}
\end{array}\right)
\left(\begin{array}{ccc}
\frac{21}{40}&\frac{-7}{20}&\frac{21}{40}\\[1pt]
\frac{-1}{40}&\frac{7}{20}&\frac{-1}{40}\\[1pt]
-\frac{1}{2}&0&\frac{1}{2}
\end{array}\right);\\
%E_2
E_2 &= %\left(\begin{array}{ccc}
%7&\frac{1}{3}&-1\\[1pt]  7&\frac{1}{3}&1\\[1pt]  1&1&0
%\end{array}\right)
%\left(\begin{array}{ccc}
%\frac{1}{15}&0&0\\[1pt]  0&\frac{3}{5}&0\\[1pt]  0&0&\frac{1}{5}
%\end{array}\right)
%\left(\begin{array}{ccc}
%\frac{3}{40}&\frac{3}{40}&\frac{-1}{20}\\[1pt]
%\frac{-3}{40}&\frac{-3}{40}&\frac{21}{20}\\[1pt]
%-\frac{1}{2}&\frac{1}{2}&0
%\end{array}\right).\\
\left(\begin{array}{ccc}
1&1&-1\\[1pt]  1&1&1\\[1pt]  \frac{1}{7}&3&0
\end{array}\right)
\left(\begin{array}{ccc}
\frac{1}{15}&0&0\\[1pt]  0&\frac{3}{5}&0\\[1pt]  0&0&\frac{1}{5}
\end{array}\right)
\left(\begin{array}{ccc}
\frac{21}{40}&\frac{21}{40}&\frac{-7}{20}\\[1pt]
\frac{-1}{40}&\frac{-1}{40}&\frac{7}{20}\\[1pt]
-\frac{1}{2}&\frac{1}{2}&0
\end{array}\right).\\
\end{split}
\end{equation}
It is easy to calculate that the limits
of quotients which are used to compute derivatives are as follows:
$$\lim_m \frac{\left(\begin{array}{ccc}
1&1&1
\end{array}\right)E_0^m E_1\left(
\begin{array}{c}
x\\y\\z
\end{array}\right)}{
\left(\begin{array}{ccc}
1&1&1
\end{array}\right)E_0^m E_1
\left(\begin{array}{c}
\alpha\\ \beta\\ \gamma
\end{array}\right)
}
%%%%%%%%% LHS = RHS
= \lim_m \frac{\left(\begin{array}{ccc}
1&1&1
\end{array}\right)E_1^m E_0\left(
\begin{array}{c}
x\\y\\z
\end{array}\right)}{
\left(\begin{array}{ccc}
1&1&1
\end{array}\right)E_1^m E_0
\left(\begin{array}{c}
\alpha\\ \beta\\ \gamma
\end{array}\right)
}$$
where
$$\left(\begin{array}{c} x\\y\\z\end{array}\right)=\left(\begin{array}{c}\nu_{f,g}(F_0 C)\\ \nu_{f,g}(F_1 C)\\ \nu_{f,g}(F_2 C)\end{array}\right), \left(\begin{array}{c} \alpha\\ \beta\\ \gamma\end{array}\right)=\left(\begin{array}{c}\nu(F_0 C)\\ \nu(F_1 C)\\ \nu(F_2 C)\end{array}\right).$$
%if the introduction is not too long, maybe we can add the more explicit calculation
For each infinite word $w$, let $w_m$ be the word consisting of the first $m$ letters of $w$; then $\lim\limits_{m\to \infty} E_{w_m} = 0$. It follows that every energy measure is continuous.

It is natural to ask whether the energy Laplacian behaves in a similar manner to the standard Laplacian; in Section 2, we express the ``self-similarity" of the energy Laplacian. In Section 3, we will study the decay rates of energy measure from a cell to its subcells. Through studying this, we give sharp bounds for the Radon-Nikodym derivatives for each $\nu_h$. Being in general a signed measure, it is natural to ask when it is a (positive) measure; in Section 4, we will characterize all the positive energy measures. It is well-known that the derivative is not continuous; nevertheless, in Section 5, we show that a certain restriction of the derivative is continuous. In Section 6, we will express, on each cell, the average value of derivatives by a weighted average of values on vertices of the cell. How the energy distributes is mysterious; in Section 7, we will provide some graphs and hypothesis about this question.

\section{Self-similarity of energy Laplacian}
In this section we discuss the self-similarity identities for energy measures and the energy Laplacian. We will see that the Radon-Nikodym derivatives $R_i = \frac{d\nu_i}{d\nu}$ play a crucial role in these identities. Individually, none of the $\nu_i$ is self-similar; nevertheless, together they form a self-similar family, in the sense of Mauldin-Williams \cite{MW}. We begin with considering the symmetric function $h_0$. We have the following equalities,
\begin{equation}
\label{eq:h_0Decompose}
\begin{split}
&h_0 \circ F_0 = h_0 + \frac{2}{5}h_1 + \frac{2}{5}h_2 = \frac{2}{5} + \frac{3}{5}h_0;\\
&h_0 \circ F_1 = \frac{2}{5}h_0 + \frac{1}{5}h_2;\\
&h_0 \circ F_2 = \frac{2}{5}h_0 + \frac{1}{5}h_1.
\end{split}
\end{equation}
Next we establish the relation of $\nu_i$'s from cells to subcells.
\begin{thm}
\label{thm:MMatrices}
Let
$$M_0 = \frac{1}{15}\left(
\begin{array}{ccc}
9&0&0\\
2&2&-1\\
2&-1&2
\end{array}
\right),
M_1 = \frac{1}{15}\left(
\begin{array}{ccc}
2&2&-1\\
0&9&0\\
-1&2&2
\end{array}
\right),
M_2 = \frac{1}{15}\left(
\begin{array}{ccc}
2&-1&2\\
-1&2&2\\
0&0&9
\end{array}
\right),
$$
then
$$
\left(
\begin{array}{c}
\nu_0(F_i C)\\ \nu_1(F_i C) \\ \nu_2(F_i C)
\end{array}
\right) =
M_i
\left(
\begin{array}{c}
\nu_0(C)\\ \nu_1(C) \\ \nu_2(C)
\end{array}
\right)
$$
for every cell C. In other words,
$$
\left(
\begin{array}{c}
\nu_0\\ \nu_1\\ \nu_2
\end{array}
\right) =
M_0
\left(
\begin{array}{c}
\nu_0\\ \nu_1 \\ \nu_2
\end{array}
\right)\circ F_0^{-1} +
M_1
\left(
\begin{array}{c}
\nu_0\\ \nu_1 \\ \nu_2
\end{array}
\right)\circ F_1^{-1} +
M_2
\left(
\begin{array}{c}
\nu_0\\ \nu_1 \\ \nu_2
\end{array}
\right)\circ F_2^{-1}.
$$
\end{thm}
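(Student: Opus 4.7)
The plan is to reduce the matrix identity to a direct computation using energy self-similarity, and then expand in the basis $\{\nu_0,\nu_1,\nu_2\}$ via the bilinearity of $\nu_{\cdot,\cdot}$ together with the cross-term formulas \eqref{eq:EnMeaRep}. First I would record the scalar identity
\[
\nu_{u,v}(F_iC) \;=\; \frac{5}{3}\,\nu_{u\circ F_i,\,v\circ F_i}(C),
\]
valid for any finite-energy $u,v$ and any cell $C$. This is immediate from $\nu_{u,v}(C)=\mathcal{E}_C(u,v)$ combined with the one-step scaling $\mathcal{E}_{F_iD}(u,v)=\frac{5}{3}\mathcal{E}_D(u\circ F_i,v\circ F_i)$. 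Taking $u=v=h_j$ reduces each entry of the target identity to computing $\nu_{h_j\circ F_i}(C)$ expressed in $\{\nu_0(C),\nu_1(C),\nu_2(C)\}$.

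Next I would decompose each $h_j\circ F_i$ into harmonic basis functions. The case $j=0$ is already given by \eqref{eq:h_0Decompose}; the remaining six decompositions follow from the same midpoint harmonic-extension formula, or equivalently from the $S_3$-symmetry of SG under permutation of the boundary indices. For instance, $h_1\circ F_0 = \frac{2}{5}h_1 + \frac{1}{5}h_2$ and $h_2\circ F_0 = \frac{1}{5}h_1 + \frac{2}{5}h_2$, and analogously for $F_1,F_2$. Additive constants do not contribute to energy and can be discarded. For a harmonic $h=\alpha h_0+\beta h_1+\gamma h_2$, bilinearity gives
\[
\nu_h \;=\; \alpha^2\nu_0 + \beta^2\nu_1 + \gamma^2\nu_2 + 2\alpha\beta\,\nu_{h_0,h_1} + 2\alpha\gamma\,\nu_{h_0,h_2} + 2\beta\gamma\,\nu_{h_1,h_2},
\]
and substituting the cross-term expressions from \eqref{eq:EnMeaRep} yields a combination of $\nu_0,\nu_1,\nu_2$ alone. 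Performing this expansion for the nine pairs $(i,j)$ and multiplying by $\frac{5}{3}$ reproduces the rows of the $M_i$ as claimed; for example, $h_1\circ F_0=\frac{2}{5}h_1+\frac{1}{5}h_2$ unpacks to $\frac{2}{25}\nu_0+\frac{2}{25}\nu_1-\frac{1}{25}\nu_2$, which after the $\frac{5}{3}$ factor is precisely the second row of $M_0$.

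The second displayed identity of the theorem is then a measure-theoretic restatement of the first. Because the cells $F_iSG$ cover SG with pairwise intersections equal to finite vertex sets, which are null for every energy measure (each $\nu_{h}$ being continuous, as noted after \eqref{eq:DiagE}), one has $\nu_j=\sum_i \nu_j|_{F_iSG}$ for each $j$. The per-cell identity just proved then extends by a standard monotone class argument to arbitrary Borel sets, producing $\nu_j(A)=\sum_i\sum_k (M_i)_{jk}\,(\nu_k\circ F_i^{-1})(A)$, which is the claimed pushforward form. The only real obstacle is arithmetic: the nine expansions must be carried out without sign or fraction errors to recover exactly the $\frac{1}{15}$-normalization of the $M_i$'s, but no new idea is required beyond the scaling identity, the nine harmonic-extension decompositions, and the substitutions from \eqref{eq:EnMeaRep}.
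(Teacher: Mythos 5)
Your proposal is correct and follows essentially the same route as the paper: both rest on the one-step energy scaling $\nu_{u,v}(F_iC)=\frac{5}{3}\nu_{u\circ F_i,v\circ F_i}(C)$, the harmonic decompositions of $h_j\circ F_i$ as in \eqref{eq:h_0Decompose}, and the substitution of the cross terms via \eqref{eq:EnMeaRep} (the paper merely phrases the measure identity by integrating a continuous test function and invokes symmetry rather than writing all nine expansions). Your sample computation for $h_1\circ F_0$ checks out against the second row of $M_0$, so no gap remains.
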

\begin{proof}
Let $f$ be a contintuous function on SG, by \eqref{eq:EnMeaRep}, \eqref{eq:h_0Decompose},
\begin{equation*}
\begin{split}
\int_{F_0 SG}{f d\nu_0}
= &\frac{5}{3} \int_{SG}f\circ F_0 \,d\nu_{\frac{2}{5} + \frac{3}{5}h_0}\\
= &\frac{5}{3}\left(\frac{3}{5}\right)^2\int_{SG}f\circ F_0\,d\nu_0\\
= &\frac{3}{5}\int_{SG}f\circ F_0\,d\nu_0.
\end{split}
\end{equation*}
On the other hand,
\begin{equation*}
\begin{split}
&\int_{F_1 SG}f\, d\nu_0 \\
= &\frac{5}{3}\int_{SG}f\circ F_1 \,d\nu_{\frac{2}{5}h_0+\frac{1}{5}h_2}\\
=&\frac{2}{15}\int_{SG}{f \circ F_1 \,d\nu_0} + \frac{2}{15}\int_{SG}{f\circ F_1 \,d \nu_1} - \frac{1}{15}\int_{SG}{f\circ F_1 \,d\nu_2}
\end{split}
\end{equation*}
using \eqref{eq:EnMeaRep} again. Similarly,
\begin{equation*}
\begin{split}
&\int_{F_2 SG}f \,d\nu_0 \\
= &\frac{5}{3}\int_{SG}f\circ F_1 \,d\nu_{\frac{2}{5}h_0+\frac{1}{5}h_2}\\
=&\frac{2}{15}\int_{SG}{f \circ F_2 \,d\nu_0} - \frac{1}{15}\int_{SG}{f\circ F_2 \,d \nu_1} + \frac{2}{15}\int_{SG}{f\circ F_2 \,d\nu_2}.\\
\end{split}
\end{equation*}
Since this is true for arbitrary continuous function $f$, we have the following identity
\begin{equation*}
\begin{split}
\nu_0 = &\hspace{12
pt} \frac{9}{15} \nu_0 \circ F_0^{-1}\\
&+ \frac{2}{15}\nu_0 \circ F_1^{-1} + \frac{2}{15}\nu_1 \circ F_1^{-1} - \frac{1}{15}\nu_2 \circ F_1^{-1}\\
&+ \frac{2}{15}\nu_0 \circ F_2^{-1} - \frac{1}{15}\nu_1 \circ F_2^{-1} + \frac{2}{15}\nu_2 \circ F_2^{-1}.\\
\end{split}
\end{equation*}
By symmetry, we get similar expressions for $\nu_1$ and $\nu_2$, and we have the relation
\begin{equation*}
\left(
\begin{array}{c}
\nu_0\\ \nu_1 \\ \nu_2
\end{array}
\right)
=
M_0 \left(
\begin{array}{c}
\nu_0\\ \nu_1 \\ \nu_2
\end{array}
\right) \circ F_0^{-1} +
M_1 \left(
\begin{array}{c}
\nu_0\\ \nu_1 \\ \nu_2
\end{array}
\right) \circ F_1^{-1} +
M_2 \left(
\begin{array}{c}
\nu_0\\ \nu_1 \\ \nu_2
\end{array}
\right) \circ F_2^{-1}.
\end{equation*}
\end{proof}
\begin{cor}
The Kusuoka measure satisfies the variable weight self-similar identity
\begin{equation}
\label{eq:nuSelfSimilar}
\nu=\sum_{i=0}^2 \left(\left(\frac{1}{15}+\frac{12}{15}R_i\right)\nu \right)\circ F_i^{-1}.%+\left(\left(\frac{1}{15}+\frac{12}{15}R_1\right)\nu \right)\circ F_1^{-1}+\left(\left(\frac{1}{15}+\frac{12}{15}R_2\right)\nu \right)\circ F_2^{-1}.
\end{equation}
\end{cor}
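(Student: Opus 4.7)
The plan is to obtain this identity by summing the three coordinate identities from Theorem~\ref{thm:MMatrices}. Since $\nu=\nu_0+\nu_1+\nu_2$, adding the three equations
$$\nu_j = \sum_{i=0}^2 \bigl[(\text{row } j \text{ of } M_i)\cdot(\nu_0,\nu_1,\nu_2)^T\bigr]\circ F_i^{-1}, \qquad j=0,1,2,$$
reduces everything to computing the column sums of each $M_i$, i.e.\ the row vector $\mathbf{1}^T M_i$.

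A direct inspection of the matrices gives $\mathbf{1}^T M_0=\tfrac{1}{15}(13,1,1)$, and by the cyclic symmetry of the construction (evident in the definitions of $M_1,M_2$) one gets $\mathbf{1}^T M_i=\tfrac{1}{15}(13\,e_i+\mathbf 1^T-e_i\cdot\text{offset})$; concretely, each $\mathbf{1}^T M_i$ has entry $13/15$ in the $i$-th slot and $1/15$ in the other two slots. I would then write this compactly as
$$\mathbf{1}^T M_i = \tfrac{1}{15}(1,1,1) + \tfrac{12}{15}\,e_i^T,$$
so that applying $\mathbf 1^T$ to the vector identity of Theorem~\ref{thm:MMatrices} yields
$$\nu = \sum_{i=0}^2 \Bigl[\tfrac{1}{15}(\nu_0+\nu_1+\nu_2)+\tfrac{12}{15}\nu_i\Bigr]\circ F_i^{-1}
= \sum_{i=0}^2 \Bigl[\tfrac{1}{15}\nu+\tfrac{12}{15}\nu_i\Bigr]\circ F_i^{-1}.$$

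Finally, using the definition $R_i=d\nu_i/d\nu$, we substitute $\nu_i = R_i\,\nu$ inside each bracket to reach the claimed identity \eqref{eq:nuSelfSimilar}. There is no real obstacle: the only thing to verify is the uniform splitting of $\mathbf{1}^T M_i$ into a constant $\tfrac{1}{15}\mathbf{1}^T$ part plus a $\tfrac{12}{15}e_i^T$ part, which is the algebraic content that makes $\nu$, although not self-similar in the classical sense, satisfy a \emph{variable weight} self-similar identity governed by the Radon--Nikodym derivatives $R_i$.
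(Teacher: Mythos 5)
Your proposal is correct and follows essentially the same route as the paper: both apply $\left(\begin{array}{ccc}1&1&1\end{array}\right)$ to the vector identity of Theorem~\ref{thm:MMatrices}, use the column sums $\mathbf{1}^T M_i=\frac{1}{15}\mathbf{1}^T+\frac{12}{15}e_i^T$, and then rewrite $\nu_i$ as $R_i\nu$. The computation of the column sums checks out, so nothing further is needed.
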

\begin{proof}
If we consider $\nu = \left( \begin{array}{ccc} 1&1&1 \end{array}\right)\left( \begin{array}{c} \nu_0\\ \nu_1\\ \nu_2 \end{array}\right)$, where $R_j = \frac{d\nu_j}{d\nu}$, the Radon-Nikodym Derivative of $\nu_j$ w.r.t. $\nu$, we get
\begin{equation}
\begin{split}
\nu = &\frac{12}{15}\nu_0 \circ F_0^{-1} + \frac{1}{15}\nu \circ F_0^{-1}+\frac{12}{15}\nu_1 \circ F_1^{-1} + \frac{1}{15}\nu \circ F_1^{-1}+\frac{12}{15}\nu_2 \circ F_2^{-1} + \frac{1}{15}\nu \circ F_2^{-1}\\
=&\sum_{i=0}^2\left(\left(\frac{1}{15}+\frac{12}{15}R_i\right)\nu \right)\circ F_i^{-1}.%+\left(\left(\frac{1}{15}+\frac{12}{15}R_1\right)\nu \right)\circ F_1^{-1}+\left(\left(\frac{1}{15}+\frac{12}{15}R_2\right)\nu \right)\circ F_2^{-1}
\end{split}
\end{equation}
\end{proof}
We now give the ``self-similarity" of the energy Laplacian $\Delta_\nu$.\\
\begin{thm}
Let $Q_j = \frac{1}{15} + \frac{12}{25}R_j$. Then the energy Laplacian $\Delta_\nu$ satisfies
$$
\Delta_\nu (u \circ F_j) = Q_j (\Delta_\nu u) \circ F_j
$$
\end{thm}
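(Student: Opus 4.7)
The plan is to unfold the definition of $\Delta_\nu$ via integration-by-parts, transport test functions through $F_j$, and match coefficients using the Kusuoka self-similarity from the preceding corollary.

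More precisely, fix a function $v$ on $SG$ of finite energy that vanishes on the boundary $V_0$, and extend it to $\tilde v : SG \to \R$ by
\[
\tilde v (x) = \begin{cases} v(F_j^{-1}(x)) & \text{if } x \in F_j SG, \\ 0 & \text{otherwise.} \end{cases}
\]
The first step is to check that $\tilde v$ is a legitimate test function: it is continuous (since $v$ vanishes on $V_0$, which maps under $F_j$ to the boundary $\partial(F_j SG)$, matching the zero value outside), it vanishes on $V_0$, and it has finite energy, because $\tilde v \equiv 0$ on every $1$-cell $F_i SG$ with $i \neq j$ and by the scaling relation
\[
\mathcal{E}_{F_j SG}(\tilde v, \tilde v) = \tfrac{5}{3}\, \mathcal{E}(v, v).
\]

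The second step is the key energy identity. Because $\tilde v \equiv 0$ on each $F_i SG$ with $i \neq j$, the cell decomposition of $\mathcal{E}$ gives $\mathcal{E}(u, \tilde v) = \mathcal{E}_{F_j SG}(u, \tilde v)$, and the energy-scaling formula yields
\[
\mathcal{E}(u, \tilde v) = \tfrac{5}{3}\, \mathcal{E}(u \circ F_j,\, \tilde v \circ F_j) = \tfrac{5}{3}\, \mathcal{E}(u \circ F_j, v).
\]
So $\mathcal{E}(u \circ F_j, v) = \tfrac{3}{5}\,\mathcal{E}(u, \tilde v)$. Applying \eqref{eq:EnLap} to $\tilde v$ gives $\mathcal{E}(u, \tilde v) = -\int (\Delta_\nu u)\, \tilde v\, d\nu$, so
\[
-\mathcal{E}(u \circ F_j, v) = \tfrac{3}{5}\int (\Delta_\nu u)\, \tilde v\, d\nu = \tfrac{3}{5}\int_{F_j SG}(\Delta_\nu u)\,\tilde v\, d\nu,
\]
since $\tilde v$ is supported on $F_j SG$.

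The third step is to transport this integral back to $SG$ via $F_j$. The corollary \eqref{eq:nuSelfSimilar} says that the restriction of $\nu$ to $F_j SG$ is the pushforward under $F_j$ of the measure $(\tfrac{1}{15} + \tfrac{12}{15} R_j)\, \nu$; single-vertex overlaps between subcells are negligible because every energy measure of harmonic functions is continuous. Change of variables therefore gives
\[
\int_{F_j SG} (\Delta_\nu u)\,\tilde v\, d\nu
= \int_{SG} \bigl((\Delta_\nu u)\circ F_j\bigr)\, v\, \Bigl(\tfrac{1}{15} + \tfrac{12}{15}R_j\Bigr)\, d\nu,
\]
using $\tilde v \circ F_j = v$. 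Combining this with the preceding display gives
\[
-\mathcal{E}(u \circ F_j, v) = \int_{SG} v\, \bigl((\Delta_\nu u)\circ F_j\bigr)\, Q_j\, d\nu,
\]
with $Q_j = \tfrac{3}{5}\bigl(\tfrac{1}{15} + \tfrac{12}{15}R_j\bigr)$. Since this holds for every test function $v$, the defining identity \eqref{eq:EnLap} for $\Delta_\nu(u\circ F_j)$ forces $\Delta_\nu(u \circ F_j) = Q_j\,(\Delta_\nu u)\circ F_j$, as claimed.

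The only real obstacle is bookkeeping in step one — making sure $\tilde v$ is a valid test function and that the extension-by-zero interacts cleanly with the cell decomposition of $\mathcal{E}$; everything else is a direct application of the corollary and change of variables.
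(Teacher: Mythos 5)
Your proof is correct and follows essentially the same route as the paper: both arguments combine the weak formulation \eqref{eq:EnLap}, the self-similarity of $\mathcal{E}$, and the variable-weight identity \eqref{eq:nuSelfSimilar}; your explicit extension-by-zero of the test function just makes rigorous the paper's ``since $v\circ F_j$ is arbitrary'' step. Note that your constant $Q_j=\tfrac{3}{5}\left(\tfrac{1}{15}+\tfrac{12}{15}R_j\right)=\tfrac{1}{25}+\tfrac{12}{25}R_j$ disagrees with the $\tfrac{1}{15}+\tfrac{12}{25}R_j$ in the theorem statement, but yours is the correct value --- the statement's first term is a typo, as the paper's own proof also ends with $\tfrac{3}{5}\left(\tfrac{1}{15}+\tfrac{12}{15}R_j\right)(\Delta_\nu u)\circ F_j=\Delta_\nu(u\circ F_j)$.
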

\begin{proof}
By the definition of the energy Laplacian and by \eqref{eq:nuSelfSimilar},
\begin{equation*}
\begin{split}
-\mathcal{E}(u,v) &= \int_{SG}{(\Delta_{\nu}u)v d\nu}\\
&=\sum_{j=0}^2{\int_{SG}{(\frac{1}{15}+\frac{12}{15}R_j)(\Delta_{\nu}u)\circ F_j\hspace{6pt} v \circ F_j}\hspace{6pt}d\nu}.
\end{split}
\end{equation*}
On the other hand, the self-similarity of $\mathcal{E}$ also gives
$$
-\mathcal{E}(u,v) = -\frac{5}{3}\sum_{j=0}^2{\mathcal{E}(u\circ F_j, v\circ F_j)}.
$$
Together we have
$$
\int_{SG}{\sum_{j=0}^2{(\frac{1}{15}+\frac{2}{15}R_j)(\Delta_\nu u)\circ F_j\hspace{6pt}v\circ F_j\hspace{6pt}d\nu}}=\frac{5}{3}\sum_{j=0}^2{\int_{SG}{\Delta_\nu(u\circ F_j)\hspace{6pt} v\circ F_j\hspace{6pt} d\nu}}.
$$
Since $v$ is arbitary, and so $v\circ F_j$ is arbitary, we must have the self-similarity of $\Delta_\nu$
$$
\frac{3}{5}Q_j(\Delta_\nu u)\circ F_j = \Delta_\nu (u\circ F_j).
$$
\end{proof}
\begin{cor}
Let $w = (w_1,...,w_m)$ be a finite word of length $m$ and $F_w = F_{w_1}\circ....\circ F_{w_m}$. Define
$$
Q_w = Q_{w_m} \cdot (Q_{w_{m-1}}\circ F_{w_m}) \cdot (Q_{w_{m-2}}\circ F_{w_{m-1}}\circ F_{w_m})\cdots (Q_{w_1}\circ F_{w_2}\circ \cdots \circ F_{w_m})
$$
Then
$$
\Delta_\nu (u\circ F_w) = Q_w (\Delta_\nu u) \circ F_w.
$$
\end{cor}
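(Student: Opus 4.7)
The plan is a straightforward induction on $m = |w|$. The base case $m=1$ is literally the preceding theorem, since then $Q_w$ collapses to $Q_{w_1}$.

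For the inductive step I would peel off the last letter of $w$: set $w'' = (w_1,\ldots,w_{m-1})$, so that $F_w = F_{w''}\circ F_{w_m}$ and $u\circ F_w = (u\circ F_{w''})\circ F_{w_m}$. Applying the preceding theorem to the function $u\circ F_{w''}$ with index $j = w_m$ yields
$$\Delta_\nu(u\circ F_w) = Q_{w_m}\cdot(\Delta_\nu(u\circ F_{w''}))\circ F_{w_m}.$$
The inductive hypothesis then substitutes $Q_{w''}\cdot(\Delta_\nu u)\circ F_{w''}$ for $\Delta_\nu(u\circ F_{w''})$, and since precomposition with $F_{w_m}$ respects products of functions, the right-hand side becomes $Q_{w_m}\cdot(Q_{w''}\circ F_{w_m})\cdot(\Delta_\nu u)\circ F_w$.

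What remains is pure bookkeeping: verifying that $Q_{w_m}\cdot(Q_{w''}\circ F_{w_m})$ equals $Q_w$. Expanding $Q_{w''}$ by its definition and distributing $\circ F_{w_m}$ across its product yields factors $(Q_{w_{m-1}}\circ F_{w_m}),\,(Q_{w_{m-2}}\circ F_{w_{m-1}}\circ F_{w_m}),\,\ldots,\,(Q_{w_1}\circ F_{w_2}\circ\cdots\circ F_{w_m})$, which are precisely the trailing $m-1$ factors appearing in $Q_w$; prepending $Q_{w_m}$ completes the match. There is no genuine analytic obstacle here---the heavy lifting was already done by the preceding theorem---and the only care required is keeping the indices and nested compositions aligned with the telescoping definition of $Q_w$.
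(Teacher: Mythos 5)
Your proposal is correct and is exactly the paper's argument: the paper's entire proof is ``Use the theorem and iterate,'' and your induction on $|w|$, peeling off the last letter and checking that $Q_{w_m}\cdot(Q_{w''}\circ F_{w_m})=Q_w$, is precisely that iteration spelled out.
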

\begin{proof}
Use the theorem and iterate.
\end{proof}

To compute $\Delta_\nu u$ on the cell $F_w SG$, zoom in by looking at $u \circ F_w$ as a function on $SG$, compute its Laplacian $\Delta_\nu(u\circ F_w)$ and multiply by $\frac{1}{Q_w}$,
$$
(\Delta_\nu u)\circ F_w = \frac{1}{Q_w} \Delta_\nu(u\circ F_w),
$$
and then zoom out via $F_w^{-1}$
$$
\Delta_\nu u |_{F_w SG} = \left( \frac{1}{Q_w}\Delta_\nu(u\circ F_w)\right)\circ F_w^{-1} = \frac{1}{Q_w\circ F_w^{-1}}\left(\Delta_\nu(u\circ F_w)\right)\circ F_w^{-1}.
$$

\section{Bounds of derivatives}
By the definition of energy measures, we see that every energy measure is continuous; by the self similarity of SG, we see that it makes sense to consider the measure from cells to subcells. It is natural to study how the measure varies for a sequence of cells converging to a point.
\begin{lem}
\label{lemma3.1}
For a harmonic function $h$, if we let $x=\nu_h(F_w F_0 SG)$, $y=\nu_h(F_w F_1 SG)$, $z=\nu_h(F_w F_2 SG)$, then $14x-y-z\geq 0$ with equality if and only if $h$ is skew symmetric (with respect to $F_w q_0$) on the cell $F_w SG$.
\end{lem}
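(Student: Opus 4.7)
The plan is a direct computation via the harmonic-extension algorithm. By the self-similar scaling of energy, $\nu_h(F_w F_i SG) = (5/3)^{|w|+1}\mathcal{E}(\tilde h\circ F_i)$, where $\tilde h := h \circ F_w$ is the harmonic function on $SG$ with boundary values $a := h(F_w q_0)$, $b := h(F_w q_1)$, $c := h(F_w q_2)$. Pulling out the common positive factor $(5/3)^{|w|+1}$, the claim reduces to showing, for an arbitrary harmonic $\tilde h$ on $SG$ with boundary values $a,b,c$, that
$$14\mathcal{E}(\tilde h\circ F_0) - \mathcal{E}(\tilde h\circ F_1) - \mathcal{E}(\tilde h\circ F_2) \ge 0,$$
with equality iff $2a = b+c$. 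This last condition $\tilde h(q_1) - \tilde h(q_0) = -(\tilde h(q_2) - \tilde h(q_0))$ is exactly skew-symmetry of $\tilde h$ with respect to the pivot $q_0$, equivalent to skew-symmetry of $h$ on $F_w SG$ with respect to $F_w q_0$.

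Next I would invoke the $\tfrac{2}{5}$-$\tfrac{2}{5}$-$\tfrac{1}{5}$ averaging rule recalled in the introduction to write down, as affine functions of $a,b,c$, the three $V_1$-vertex values of $\tilde h$. For each $i$, the function $\tilde h\circ F_i$ is again harmonic on $SG$, so its energy equals the constant $\sum_{k<\ell}\bigl(\tilde h\circ F_i(q_k) - \tilde h\circ F_i(q_\ell)\bigr)^2$, i.e.\ a sum of three squared differences, each with denominator $5$. Substituting into the displayed combination produces a homogeneous quadratic form in $a,b,c$ with denominator $25$; since energy is translation-invariant (adding a constant to $\tilde h$ does not alter energy), I may normalize $a=0$ and reduce to a binary quadratic form in $b$ and $c$.

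The content of the lemma is that this binary form is a perfect square: careful expansion should yield
$$14\mathcal{E}(\tilde h\circ F_0) - \mathcal{E}(\tilde h\circ F_1) - \mathcal{E}(\tilde h\circ F_2) = \tfrac{12}{5}(2a - b - c)^2,$$
which is manifestly nonnegative and vanishes exactly on the claimed skew-symmetric locus. The inequality then follows after restoring the factor $(5/3)^{|w|+1}$. The only real obstacle is algebraic bookkeeping — the identity works because $(14,-1,-1)$ is, up to positive scaling, the unique linear combination of the three subcell-energy polynomials that annihilates their indefinite part — so a careless expansion risks producing the wrong coefficient or, worse, an indefinite form. No deeper ingredient beyond this calculation is required.
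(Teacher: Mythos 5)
Your proposal is correct and follows essentially the same route as the paper: compute the three subcell energies from the $\tfrac{2}{5}$--$\tfrac{2}{5}$--$\tfrac{1}{5}$ harmonic extension rule and observe that the combination $14x-y-z$ is a perfect square vanishing exactly on the skew-symmetric locus (your claimed identity $\tfrac{12}{5}(2a-b-c)^2$ does check out and agrees with the paper's $10\cdot\tfrac{6}{25}(a-2)^2$ under its normalization). The only difference is cosmetic: the paper normalizes two boundary values to reduce to a one-parameter quadratic $(a-2)^2$, while you keep a binary form, which in fact handles the degenerate case $h(F_wq_0)=h(F_wq_2)$ that the paper's WLOG silently skips.
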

\begin{proof}
WLOG, let $h(F_w q_0)=1$, $h(F_w q_1)=a$, $h(F_w q_2)=0$. Then, writing $m=|w|$ and a direct computation on the
energies on each cell,
\begin{equation*}
\begin{split}
x&=\left(\frac{5}{3}\right)^{m+1}\left( \frac{6}{25}\right)\left(a^2-3a+3 \right); \\
y&=\left(\frac{5}{3}\right)^{m+1}\left( \frac{6}{25}\right)\left(3a^2-3a+1 \right); \\
z&= \left(\frac{5}{3}\right)^{m+1}\left( \frac{6}{25}\right)\left(a^2+a+1 \right)
\end{split}
\end{equation*}
so
$14x-y-z= 10\left( \frac{5}{3} \right)^m \left( \frac{6}{25} \right) \left( a^2-4a+4 \right)$
and
$a^2-4a+4 = (a-2)^2 \geq 0$. Equality holds iff $a=2$, in which case $h$ is skew symmetric.
\end{proof}

\begin{lem} Let $w$ be a word. For
$$
\left(
\begin{array}{c}
x\\
y\\
z
\end{array}
\right)=
E_w
\left(
\begin{array}{c}
2\\
2\\
2
\end{array}
\right),
$$
we have
$$
14x-y-z>0.
$$
\end{lem}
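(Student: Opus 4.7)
The plan is to realize $(x,y,z)^T$ as a sum of three subcell-energy vectors, one for each basic symmetric harmonic $h_0, h_1, h_2$, and then apply Lemma~\ref{lemma3.1} term by term. First I would observe that $(2,2,2)^T$ is the vector of level-one subcell values of the Kusuoka measure on $SG$. Indeed, taking $a=0$ in Lemma~\ref{lemma3.1} with the empty word gives $\nu_{h_0}(F_0 SG)=6/5$ and $\nu_{h_0}(F_1 SG)=\nu_{h_0}(F_2 SG)=2/5$; cyclic symmetry of $SG$ yields the analogous formulas for $h_1$ and $h_2$; and summing, using $\nu=\nu_0+\nu_1+\nu_2$, produces $\nu(F_j SG)=2$ for each $j=0,1,2$.

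Next I would invoke the linearity of the $E_i$ in the measure argument: since each $E_i$ is the same matrix for every energy measure $\nu_{f,g}$ of harmonic functions, the decomposition $\nu=\sum_{i=0}^{2}\nu_{h_i}$ is preserved by iterated application of the $E_i$. Thus
$$E_w\begin{pmatrix}2\\2\\2\end{pmatrix}=\sum_{i=0}^{2}\begin{pmatrix}\nu_{h_i}(F_w F_0 SG)\\ \nu_{h_i}(F_w F_1 SG)\\ \nu_{h_i}(F_w F_2 SG)\end{pmatrix},$$
so
$$14x-y-z=\sum_{i=0}^{2}\bigl[14\nu_{h_i}(F_w F_0 SG)-\nu_{h_i}(F_w F_1 SG)-\nu_{h_i}(F_w F_2 SG)\bigr].$$
Each summand is $\geq 0$ by Lemma~\ref{lemma3.1}, which already yields the weaker $14x-y-z\geq 0$.

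The main obstacle is upgrading this to a strict inequality. By the equality case of Lemma~\ref{lemma3.1}, strict positivity fails only if every $h_i$ is skew-symmetric on $F_w SG$ with respect to $F_w q_0$, i.e.\ $2h_i(F_w q_0)=h_i(F_w q_1)+h_i(F_w q_2)$ for each $i=0,1,2$. Equivalently, the row vector $(2,-1,-1)$ annihilates the $3\times 3$ matrix $M_w$ whose $(j,i)$ entry is $h_i(F_w q_j)$. I would then show $M_w$ is invertible by noting that it is a product of one-step boundary-extension matrices $A_i\colon(h(q_j))_j\mapsto(h(F_i q_j))_j$, each of which has nonzero determinant (a direct computation gives $\det A_0=3/25$, and $A_1,A_2$ follow by rotational symmetry). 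Since $(2,-1,-1)\neq 0$ cannot annihilate an invertible matrix, at least one $h_i$ is not skew-symmetric on $F_w SG$, the corresponding summand is strictly positive, and the strict inequality $14x-y-z>0$ follows.
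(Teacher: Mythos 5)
Your proof is correct and follows essentially the same route as the paper: identify $(2,2,2)^T$ as the Kusuoka subcell vector, decompose it as $\sum_i \nu_i$, apply Lemma~\ref{lemma3.1} termwise to get $\geq 0$, and obtain strictness by ruling out simultaneous skew-symmetry of all three $h_i$ on $F_wSG$. The only difference is cosmetic: you phrase the final contradiction as the invertibility of the boundary-value matrix (a product of harmonic extension matrices with $\det = 3/25$), whereas the paper argues that three skew-symmetric boundary-value triples would all lie in the two-dimensional span of $(1,1,1)$ and $(0,1,-1)$, contradicting linear independence of the $h_i$ --- these are equivalent, and your version merely makes explicit the injectivity of restriction to a cell that the paper leaves implicit.
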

\begin{proof}
Since $\nu(SG)=6$, we have by symmetry $\nu(F_iSG)=2$ for $i=0,1,2$. Thus
$$\left(\begin{array}{c} x\\y\\z\end{array}\right)=
\left(\begin{array}{c} \nu(F_wF_0SG)\\ \nu(F_wF_1SG)\\ \nu(F_wF_2SG)\\\end{array}\right).$$
Now Lemma \ref{lemma3.1} implies $14\nu_i(F_wF_0SG)-\nu_i(F_wF_1SG)-\nu_i(F_wF_2SG)\geq0$ for all $i$, so adding these up yields $14x-y-z\geq 0$. To complete the proof, we show that $14\nu_i(F_w F_0SG)-\nu_i(F_w F_1SG)-\nu_i(F_w F_2SG) > 0$ for some $i$. Suppose, on the contrary, that each $h_i$ is skew symmetric with respect to $F_w q_0$ on the cell $F_w SG$. Then there exist $c_i, a_i$ such that $\left(h_i \circ F_w (q_0), h_i \circ F_w (q_1), h_i \circ F_w (q_2)\right) = \left(c_i, c_i + a_i, c_i-a_i\right) = c_i(1,1,1) + a_i(0,1,-1)$, but this shows $h_i$ lie in a two-dimensional subspace, contradicting ${h_i}$ being linearly independent.
\end{proof}

\begin{lem}
\label{lemma3.3}
For the sequence $\{F_w F_i^m SG\}_m$, which converges to the point $F_w(q_i)$, we have that $\nu(F_w F_i^m SG)=\Theta \left( \left( \frac{3}{5} \right)^{m} \right)$, (here $A_m=\Theta(B_m)$ means $A_m=O(B_m)$ and $B_m=O(A_m)$).
\end{lem}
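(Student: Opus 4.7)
The plan is to express $\nu(F_w F_i^m SG)$ through the transfer matrices $E_i$ from the introduction and read off the asymptotics from their spectral decomposition.

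Setting $v_w := (\nu(F_w F_0 SG), \nu(F_w F_1 SG), \nu(F_w F_2 SG))^T = E_w(2,2,2)^T$, iteration of the $E_i$-recursion shows that the vector of $\nu$-values on the three subcells of $F_w F_i^m SG$ equals $E_i^m v_w$, so summing the components yields
$$\nu(F_w F_i^m SG) = (1,1,1)\, E_i^m \, v_w.$$
Using the explicit diagonalization $E_i = V_i D V_i^{-1}$ with $D = \operatorname{diag}(1/15, 3/5, 1/5)$ from \eqref{eq:DiagE}, a direct check yields $(1,1,1) V_i = (15/7, 5, 0)$ for each $i \in \{0,1,2\}$; the vanishing third entry reflects that the $1/5$-eigenvector of $E_i$ is of the form $(0,-1,1)$ up to permutation, whose components sum to zero. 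Writing $(a_i, b_i, c_i)^T := V_i^{-1} v_w$, this gives
$$\nu(F_w F_i^m SG) = \frac{15}{7}\, a_i \left(\frac{1}{15}\right)^m + 5\, b_i \left(\frac{3}{5}\right)^m,$$
so only the two eigenvalues $1/15$ and $3/5$ contribute to the asymptotics.

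The crux is to prove $b_i > 0$ for each $i$. Reading off the second row of $V_i^{-1}$ from \eqref{eq:DiagE} and pairing with $v_w = (x,y,z)^T$, one computes $b_0 = (14x - y - z)/40$, which is strictly positive by Lemma 3.2. The analogous identities $b_1 = (14y - x - z)/40$ and $b_2 = (14z - x - y)/40$ are positive by the symmetric versions of Lemmas \ref{lemma3.1} and 3.2: their proofs used only that $q_0$ plays the role of a distinguished boundary vertex of $SG$, so the same arguments with $q_0$ replaced by $q_1$ or $q_2$ give the required strict inequalities.

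Once $b_i > 0$ is in hand, the $\Theta\bigl((3/5)^m\bigr)$ conclusion is routine: since $1/15 < 3/5$, we have $\nu(F_w F_i^m SG) \le (15|a_i|/7 + 5 b_i)(3/5)^m$ for all $m$, and for $m$ large enough that $(15|a_i|/7)(1/15)^m \le (5b_i/2)(3/5)^m$, we obtain $\nu(F_w F_i^m SG) \ge (5 b_i/2)(3/5)^m$; positivity of $\nu$ on the finitely many remaining small-$m$ cells extends the lower bound uniformly. The main obstacle throughout is the strict positivity of $b_i$, which is precisely what Lemma 3.2 (and its relabelings) provides; the rest is linear-algebra bookkeeping with the eigendata of $E_i$.
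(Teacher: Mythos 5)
Your proof is correct and follows essentially the same route as the paper: both express $\nu(F_wF_i^mSG)$ as $(1,1,1)E_i^mE_w(2,2,2)^T$, diagonalize $E_i$ to isolate the $(1/15)^m$ and $(3/5)^m$ contributions, and invoke Lemma 3.2 (via Lemma \ref{lemma3.1}) to get strict positivity of the $(3/5)^m$ coefficient $\frac{1}{40}(14x-y-z)$. Your explicit treatment of $i=1,2$ by relabeling and of the finitely many small $m$ only spells out details the paper dismisses with ``the other cases are similar.''
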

\begin{proof}
We prove this statement for $i=0$; the other cases are similar.

We know that
\begin{equation*}
\begin{split}
\nu(F_w F_0^m SG)&=\nu_0(F_w F_0^m SG)+\nu_1(F_w F_0^m SG)+\nu_2(F_w F_0^m SG)\\
&=
\left(
\begin{array}{ccc}
1&1&1
\end{array}
\right)
E_0^m E_w
\left( \left(
\begin{array}{c}
6/5\\
2/5\\
2/5
\end{array}
\right)+
\left(
\begin{array}{c}
2/5\\
6/5\\
2/5
\end{array}
\right)+
\left(
\begin{array}{c}
2/5\\
2/5\\
6/5
\end{array}
\right)\right)\\
&=
\left(
\begin{array}{ccc}
1&1&1
\end{array}
\right)
E_0^m E_w
\left(
\begin{array}{c}
2\\2\\2
\end{array}
\right).\\
\end{split}
\end{equation*}

Now we let
$$
E_w
\left(
\begin{array}{c}
2\\
2\\
2
\end{array}
\right)=
\left(
\begin{array}{c}
x\\
y\\
z
\end{array}
\right)
$$
and see from the diagonalization of $E_0^m$ that
$$
\nu(F_w F_0^m SG)=
\left(
-\frac{3}{4}x+ \frac{9}{8}y+\frac{9}{8}z
\right)
\left(
\frac{1}{15}
\right)^m
+
\frac{1}{8}
\left(
14x-y-z
\right)
\left(
\frac{3}{5}
\right)^m
=
\Theta\left(
\frac{3}{5}
\right)^m
$$
since $14x-y-z>0$ by Lemma 3.2.
\end{proof}

We have proven the decay rates of the measures of a sequence of cells converging to a point which is not equivalent to a skew-symmetric cell. Surprisingly, in the case which the cell is equivalent to skew-symmetric at that point, we have the first condition for the Radon-Nikodym derivatives to be $0$.

\begin{lem}
\label{lemma3.4}
If $h$ is symmetric on the cell $F_w SG$ with respect to the point $F_w q_0$, then $\nu_h(F_2 F_1^m SG)=\Theta\left(\left(\frac{1}{15}\right)^m\right)$ and hence $\frac{d \nu_h}{d \nu}(F_w F_1 (q_2))=0$. Conversely, if $\frac{d \nu_h}{d \nu}(F_w F_1 (q_2))=0$, then $h$ is symmetric on the cell $F_w SG$ with respect to the point $F_w q_0$.
\end{lem}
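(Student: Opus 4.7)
The plan is to compute $\nu_h(F_2F_1^m SG)$ exactly as a function of the boundary values of $h$, identify which mode of the $F_1$-iteration the symmetry condition annihilates, and then compare with Lemma~\ref{lemma3.3}.

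First I would reduce to the case $w=\emptyset$. By the standard self-similarity $\nu_h(F_w A)=(5/3)^{|w|}\nu_{h\circ F_w}(A)$ together with the observation that $h$ is symmetric on $F_w SG$ with respect to $F_w q_0$ if and only if $h\circ F_w$ is symmetric on $SG$ with respect to $q_0$, the full statement reduces to the $w=\emptyset$ case applied to $A=F_2F_1^m SG$. The denominator $\nu(F_wF_2F_1^m SG)=\Theta((3/5)^m)$ follows from Lemma~\ref{lemma3.3} applied with the word $w\cdot 2$ and $i=1$, since $F_{w\cdot 2}F_1^m SG$ converges to $F_wF_2q_1=F_wF_1q_2$.

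In the reduced setting, since $h\circ F_{21^m}$ is harmonic, $\mathcal{E}(h\circ F_{21^m})=\sum_{i<j}\bigl(h(F_{21^m}q_i)-h(F_{21^m}q_j)\bigr)^2$. Tracking the pair of differences $u_k=h(F_{21^k}q_0)-h(F_{21^k}q_1)$ and $v_k=h(F_{21^k}q_1)-h(F_{21^k}q_2)$ via the harmonic extension rule $(\tfrac25,\tfrac25,\tfrac15)$ at edge midpoints, a direct computation shows that $F_2$ sends $(u_0,v_0)$ to $(u_0/5,(u_0+3v_0)/5)$ and that each further $F_1$ applies the linear map $(u,v)\mapsto\tfrac15(2u-v,\,-u+2v)$, whose eigenvalues are $3/5$ on the eigenvector $(1,-1)$ and $1/5$ on $(1,1)$. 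Decomposing the post-$F_2$ vector in this eigenbasis gives scalars $\alpha=-3v_0/10$ and $\beta=(2u_0+3v_0)/10$, so
\[
\nu_h(F_2F_1^m SG)=(5/3)^{m+1}\mathcal{E}(h\circ F_{21^m})=\tfrac{10}{3}\alpha^2(3/5)^m+10\beta^2(1/15)^m.
\]

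The forward direction then reads off immediately: the hypothesis $h(q_1)=h(q_2)$ is exactly $v_0=0$, which kills $\alpha$ and leaves only the $\Theta((1/15)^m)$ term, so dividing by $\nu(F_2F_1^m SG)=\Theta((3/5)^m)$ gives a ratio of order $(1/9)^m\to 0$, establishing $\tfrac{d\nu_h}{d\nu}(F_wF_1(q_2))=0$. For the converse, if $h$ is not symmetric with respect to $q_0$ on $F_w SG$ then $v_0\neq 0$, hence $\alpha\neq 0$, the $(3/5)^m$ term dominates the numerator, and the ratio tends to a strictly positive limit, contradicting the vanishing of the derivative. The main obstacle is simply the linear-algebra bookkeeping needed to carry the iteration through a composition of $F_2$ followed by $F_1^m$; the structural observation driving everything is that the $3/5$-eigenvector direction $(1,-1)$ is precisely what the symmetry condition $v_0=0$ annihilates, so the symmetric hypothesis is exactly what is needed to suppress the otherwise generic $(3/5)^m$ decay rate down to the subdominant $(1/15)^m$.
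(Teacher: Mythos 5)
Your proof is correct, and it reaches the paper's two-term asymptotic expansion by a more self-contained route. The paper gets the expansion $\nu_h(\text{cell})=c_1\left(\tfrac{1}{15}\right)^m+c_2\left(\tfrac{3}{5}\right)^m$ from the diagonalization of the matrices $E_i$ (as in the proof of Lemma~\ref{lemma3.3}, applied to $\nu_h$ in place of $\nu$), and then identifies the vanishing of the leading coefficient $c_2=\tfrac18(14x-y-z)$ with skew-symmetry of $h$ on the subcell via Lemma~\ref{lemma3.1}, translating that into symmetry of $h$ on the big cell. You instead diagonalize the $2\times 2$ harmonic-extension map on the difference coordinates $(u,v)$, which yields the same two decay modes $\left(\tfrac{3}{5}\right)^m$ and $\left(\tfrac{1}{15}\right)^m$ with fully explicit coefficients $\tfrac{10}{3}\alpha^2$ and $10\beta^2$, $\alpha=-3v_0/10$; the equivalence ``leading mode vanishes iff $h(q_1)=h(q_2)$'' then falls out of the formula for $\alpha$ rather than being imported from Lemma~\ref{lemma3.1}. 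Both arguments use Lemma~\ref{lemma3.3} identically for the denominator, and your explicit version actually makes transparent a step the paper states rather tersely (and with a notational slip between $F_1F_2^m$ and $F_2F_1^m$). Two small points: your indexing of $u_k,v_k$ is off by one relative to your own definition (the pair you feed into the $F_2$ step is the difference vector on $SG$, not on $F_2SG$), and, like the paper, you should set aside the degenerate case $u_0=v_0=0$ (constant $h$), where $\nu_h\equiv 0$ is not $\Theta\left(\left(\tfrac{1}{15}\right)^m\right)$ although the conclusion about the derivative still holds trivially.
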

\begin{proof}
We know that $\nu_h(F_w SG)=\left(\frac{5}{3}\right)^{|w|} \nu_{h \circ F_w}$, so we may consider the case when $w$ is the empty word. We see from Lemmas \ref{lemma3.1} and \ref{lemma3.3} that the term $\left(\frac{5}{3}\right)^m$ in $\nu_h (F_1F_2^m)$ vanishes if and only if $h$ is skew-symmetric on $F_1 SG$ with respect to the point $F_1 F_2^m (q_2)= F_1 (q_2)$,
%the original point was $F_w F_1 (q_2)$, i think it should be this one ^
 which is true if and only if $h$ is symmetric on $SG$ with respect to the point $q_0$,
 in which case $\nu_h(F_1 F_2^m SG)=\Theta\left(\left(\frac{1}{15}\right)^m\right)$ and the denominator of $\frac{d \nu_h}{d \nu}(x)=\lim \frac{\nu_h(F_2 F_1^m SG)}{\nu(F_2 F_1^m SG)}$ dominates. It follows that symmetry of $h$ around $q_0$ is sufficient and necessary to have $\frac{d \nu_h}{d \nu}(F_1 (q_2))=0$.
%\\ The following is OLD VERSION of proof
%WLOG, we let $h=h_0$. Observe that $h_0 \circ F_2$ is skew symmetric with respect to $x$. A similar computation from %previous lemma and lemma 2.1
%labeling
% shows that the $\left( \frac{3}{5} \right)^m$ term vanishes. So for $\frac{d \nu_h}{d \nu}(x)=\lim \frac{\nu_h(F_2 F_1^m SG)}{\nu(F_2 F_1^m SG)}$, the denominator, which is order of $\left( \frac{3}{5} \right)^m$ by lemma
 %cite lemma
%, dominates, so $\frac{d \nu_h}{d \nu}(x)=0$.
\end{proof}

\begin{thm}
Let $h$ be a harmonic function on SG with $\nu_h=a\nu_0+b\nu_1+c\nu_2$ and $h^\perp$ be a harmonic function orthonormal to $h$ under the energy inner product. Also let $C$ be a cell in $SG$. Then\\
a) $$\inf_{x \in C} \frac{d \nu_h}{d \nu}(x)=0;$$ \\
b)
$$
\sup_{x \in C} \frac{d \nu_h}{d \nu}=\frac{2}{3}(a+b+c).
$$
and if the maximum of $\frac{d\nu_h}{d \nu}$ is attained, then the minimum of $\frac{d\nu_{h^\perp}}{d \nu}$ is attained at the same point.
\end{thm}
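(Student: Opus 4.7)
The plan hinges on a single identity plus the characterization of zero derivatives from Lemma 3.4 (midpoints) and a corner analogue of it derived from Lemma 3.1. Starting from the earlier observation $\nu_h + \nu_{h'} = \tfrac{\mathcal{E}(h)}{3}\nu$ whenever $h'$ is orthogonal to $h$ with the \emph{same} energy, applied to $h' = \sqrt{\mathcal{E}(h)}\,h^\perp$, one obtains $\nu_h + \mathcal{E}(h)\nu_{h^\perp} = \tfrac{\mathcal{E}(h)}{3}\nu$; dividing by $\nu$ and using $\mathcal{E}(h) = \nu_h(SG) = 2(a+b+c)$ yields the pointwise identity
\[
\frac{d\nu_h}{d\nu}(x) + \mathcal{E}(h)\,\frac{d\nu_{h^\perp}}{d\nu}(x) = \frac{2}{3}(a+b+c).
\]
Since both Radon-Nikodym derivatives are nonnegative (energy measures are positive), this immediately produces the lower bound in (a) and the upper bound in (b). The identity also yields the conditional last statement directly: if $\tfrac{d\nu_h}{d\nu}(x) = \tfrac{2}{3}(a+b+c)$, then $\tfrac{d\nu_{h^\perp}}{d\nu}(x) = 0$, which is its minimum.

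\emph{Attainment of sup/inf in every cell.} The same identity shows that (b) is equivalent to $\inf_C\tfrac{d\nu_{h^\perp}}{d\nu} = 0$, so both (a) and (b) reduce to one single claim: for every non-constant harmonic $g$ and every cell $C = F_\sigma SG$, $\inf_{x\in C}\tfrac{d\nu_g}{d\nu}(x) = 0$. To prove it, I parametrize $g$ modulo constants by $(\alpha,\beta) := (g(F_\sigma q_1) - g(F_\sigma q_0),\, g(F_\sigma q_2) - g(F_\sigma q_0)) \ne 0$, so that subdivision $F_i$ acts on $(\alpha,\beta)$ by an explicit $2\times 2$ matrix with eigenvalues $\tfrac{1}{5}$ and $\tfrac{3}{5}$, whose dominant eigenvectors $(1,1), (1,0), (0,1)$ for $i = 0,1,2$ coincide with three of the six ``sym/skew-sym about a vertex'' directions. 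Lemma 3.4 combined with the corner analogue of Lemma 3.1 shows that at each of six canonical points of a subcell $F_\sigma F_w SG$ (the three corners and three edge midpoints), the derivative equals a ratio $L(\alpha_w,\beta_w)^2 / \sum_{i=0}^{2} L(\alpha_{w,i},\beta_{w,i})^2$, where $L$ is one of six fixed linear functionals (whose kernels are the six special lines), $(\alpha_w,\beta_w)$ is the image of $(\alpha,\beta)$ under the composition of subdivision matrices associated to the word $w$, and $(\alpha_{w,i},\beta_{w,i})$ the analogous image for the basis harmonic $h_i$. Iterating the three subdivision matrices drives $[\alpha_w:\beta_w]$ arbitrarily close to each of the three attracting fixed points, which lie on the six special lines, making the numerator arbitrarily small.

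\emph{Main obstacle.} The delicate part is to ensure the denominator $\sum_i L(\alpha_{w,i},\beta_{w,i})^2$ does not vanish in tandem with the numerator. This holds because $(\alpha_0,\beta_0), (\alpha_1,\beta_1), (\alpha_2,\beta_2)$ span $\mathbb{R}^2$ and remain in general position under the invertible subdivision matrices, so cannot all projectively align with $(\alpha,\beta)$ as $w$ varies. Making this intuition quantitative --- by carefully tracking the $\tfrac{3}{5}$-eigenvalue components of the four vectors across the word $w$ --- is the technical heart of the proof.
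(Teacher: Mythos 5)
Your reduction of part (b) and of the final attainment claim to part (a), via the identity $\tfrac{d\nu_h}{d\nu}+\mathcal{E}(h)\tfrac{d\nu_{h^\perp}}{d\nu}=\tfrac{2}{3}(a+b+c)$, is exactly what the paper does and is correct. The gap is in your proof of (a), which is the real content of the theorem. Your proposed mechanism --- iterate the subdivision matrices so that $[\alpha_w:\beta_w]$ approaches an attracting fixed point lying on a ``special line,'' thereby making the numerator $L(\alpha_w,\beta_w)^2$ small --- does not work, because the denominator shrinks at exactly the same rate. Concretely, decompose $g\equiv c_1h_0+c_2(h_1-h_2)$ modulo constants into the $\tfrac{3}{5}$- and $\tfrac{1}{5}$-eigendirections of composition with $F_0$, and take $L(\alpha,\beta)=\alpha-\beta$ (the functional vanishing on the symmetric-about-$q_0$ line, which is the attracting direction $[1:1]$). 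Then for $w=0^m$ one gets $L(\alpha_w,\beta_w)=2c_2 5^{-m}$ while $\sum_i L(\alpha_{w,i},\beta_{w,i})^2=2\cdot 25^{-m}$, so the derivative at the distinguished midpoint of $F_0^mSG$ equals $2c_2^2$ for \emph{every} $m$: it is constant along the orbit, not small. (This is consistent with the theorem: for skew-symmetric $g$, $c_2^2=1$ and $2c_2^2=\tfrac{2}{3}(a+b+c)$ is the supremum, attained, not the infimum.) Your stated resolution of the denominator issue --- that the $(\alpha_{w,i},\beta_{w,i})$ ``cannot all projectively align with $(\alpha_w,\beta_w)$'' --- is false in the limit: under $A_0^m$ all four vectors converge projectively to $[1:1]$, and what matters is the relative rate, which your computation of cross-ratios would show is a projective invariant that does not decay. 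What you would actually need is that the dual orbit $\{[L\circ T_w]\}$ accumulates on the annihilator of $(\alpha,\beta)$ for every choice of $(\alpha,\beta)$, a density statement for the transpose IFS on the dual projective line that you neither state nor prove.

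The paper's route to (a) is entirely different and supplies precisely the ingredient your sketch is missing: a mechanism for locating points where the derivative is exactly (or almost) zero. It invokes the result of Dalrymple--Strichartz--Vinson that the location $M(a)$ of the maximum of $h$ along an edge is a continuous, strictly decreasing function of the third boundary value $a$, with range reaching the midpoint only in the limit. If the max along a side of a subcell sits at the midpoint of that side, the function is forced to be symmetric on that subcell, and Lemma 3.4 then gives an exact zero of the derivative; for non-dyadic max locations one perturbs $a$ to a nearby $a_\varepsilon$ whose max location is dyadic and uses $\bigl|\tfrac{d\nu_i}{d\nu}\bigr|\le 1$ to conclude $\tfrac{d\nu_h}{d\nu}(x_\varepsilon)<\varepsilon$. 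You should either import that intermediate-value/monotonicity argument or genuinely prove the dual density statement; as written, the heart of part (a) is missing.
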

\begin{proof}
We first prove a). If two of the values of $h$ on the boundary of $C$ are the same then $h$ is symmetric and the result follows from Lemma \ref{lemma3.4}, so we assume all three values are distinct. Since we consider $h$ modulo constants and the conclusion is unchanged by replacing $h$ with a constant multiple $ch$, we may assume without loss of generality that $h$ is harmonic with $h(q_0)=0$, $h(q_1)=1$, $h(q_2)=a\geq2$. Let $x$ be the point on the edge from $q_0$ to $q_1$ where $h$ achieves its maximum. It was shown in \cite{MaxOnEdge} that if we identify this edge in the obvious fashion with the unit interval $[0,1]$ oriented so $q_0$ corresponds to $0$ and $q_1$ to $1$, and define a map $M:[2,\infty)\to[0,1]$ by $M(a)=x$, then $M$ is continuous and strictly decreasing, and that $\lim_{a\to\infty}M(a)$ gives the midpoint $F_1q_0$. It follows that if we have a harmonic function on a cell such that the maximum along a side occurs at the midpoint of that side then the function is symmetric under the reflection fixing the vertex opposite the side. This is applicable in the case when the location of our maximum is $x\in V_*$ (equivalently a dyadic point of $[0,1]$ under our identification), because then there is a word $w$ such that $x=F_wF_1(q_0)$, and the function is harmonic on $F_w(SG)$ with its maximum along $F_w[q_0,q_1]$ at the midpoint of this side. The conclusion that $h$ is symmetric on $F_w(SG)$ allows us to apply Lemma \ref{lemma3.4} to deduce that $\frac{d\nu_h}{d\nu}(x)=0$.

Now suppose that $x$ is not a dyadic point. Since dyadic points are dense in the interval [0,1], we pick a increasing sequence $x_n$ of dyadic points converging to $x$. Because $M$ is decreasing and continuous, there exists a decreasing sequence of $a_n$ converging to $a$ satisfying $M(a_n) = x_n$.

Let $\varepsilon > 0$ be given. A simple computation shows that the energy measure $\nu_h = a\nu_0 + (1-a)\nu_1 + (a^2-a)\nu_2$. From the previous discussion, we can pick an $a_n$($x_n$, resp.), which will be denoted as $a_\varepsilon$($x_\varepsilon$, resp.), satisfying $|a-a_\varepsilon|+|(1-a)-(1-a_\varepsilon)|+|(a^2-a)-(a_\varepsilon^2-a_\varepsilon)|<\varepsilon$. We also denote the harmonic function with boundary values $0, 1, a_\varepsilon$ as $h_\varepsilon$. The previous discussion shows that $\frac{d\nu_{h_\varepsilon}}{d\nu}(x_\varepsilon) = 0$. Now,

\begin{equation*}
\begin{split}
&\frac{d\nu_h}{d\nu}(x_\varepsilon)\\
=& \frac{d(\nu_h-\nu_{h_\varepsilon})}{d\nu}(x_\varepsilon) + \frac{d\nu_{h_\varepsilon}}{d\nu}(x_\varepsilon)\\
=&\frac{d(\nu_h-\nu_{h_\varepsilon})}{d\nu}(x_\varepsilon)\\
=&\big{(}a-a_\varepsilon\big{)}\frac{d\nu_0}{d\nu}(x_\varepsilon)+\big{(}(1-a)-(1-a_\varepsilon)\big{)}\frac{d\nu_1}{d\nu}(x_\varepsilon)+\big{(}(a^2-a)-(a_\varepsilon^2-a_\varepsilon)\big{)}\frac{d\nu_2}{d\nu}(x_\varepsilon)\\
\leq & |a-a_\varepsilon|+|(1-a)-(1-a_\varepsilon)|+|(a^2-a)-(a_\varepsilon^2-a_\varepsilon)|\\
< & \varepsilon
\end{split}
\end{equation*}
where the penultimate inequality comes from the fact that $\frac{d\nu_i}{d\nu}\leq 1$ since $\nu_i(C)\leq \nu(C)$ for any cell $C$.
\\
And consequently,
$$\inf_{x\in SG} \frac{d\nu_h}{d\nu} = 0.$$
On a general cell $C$, $h|_C$ is harmonic, so the same argument applies.

We then show b). An easy computation shows $\frac{1}{3} \nu=\nu_{rh}+\nu_{h^\perp}$, where $r^2= \frac{1}{\nu_h(SG)}$. Since the total measure of $\nu_h$ on $SG$ is $2(a+b+c)$, we have
$$\frac{3}{2(a+b+c)}\frac{d\nu_h}{d\nu} + 3 \frac{d\nu_{h^\perp}}{d\nu} = 1.$$
It follows that
$$\frac{3}{2(a+b+c)}\sup_{x\in C} \frac{d\nu_h}{d\nu} = 1 - 3 \inf_{x \in C} \frac{d\nu_{h^\perp}}{d\nu} = 1.$$
Whence,
$$\sup_{x\in SG} \frac{d\nu_h}{d\nu} = \frac{2(a+b+c)}{3}.$$
Further, if the derivative attains its supremum, then the maximum occurs where $0$ occurs in $\frac{d\nu_{h^\perp}}{d\nu}$, that is, where the local extremum occurs in the smallest edge of the harmonic function $h^\perp$.
\end{proof}

As we can see, on every cell of SG, the supremum and infimum are distance $\frac{2(a+b+c)}{3}$ from each other, so the derivative is far from continuous. However, we will later recover limited continuity when we restrict to the edges of cells.

\section{Characterzation of Positive Energy Measures}
Within the 3-dimensional space of signed energy measures which measures are positive? The following result gives the answer.

\begin{thm}
Let $P$ be the set of $(a,b,c)\in \mathbb{R}^3$ such that  $a\nu_0+b\nu_1+c\nu_2$ is a positive energy measure. Then $P$ is the solid cone $S=\{(a,b,c)\in \mathbb{R}^3: ab+bc+ca\geq 0 \}$, and the boundary of $P$ is the set of $(a,b,c)$ such that $a\nu_0+b\nu_1+c\nu_2=\nu_h$ for some harmonic function $h$.
\end{thm}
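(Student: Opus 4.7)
The plan is to identify the boundary of $P$ with the image of the parametrization $h\mapsto\nu_h$, and then close up the cone using convexity. I first compute $\nu_h$ in coordinates. Writing $h=\alpha_0h_0+\alpha_1h_1+\alpha_2h_2$ and expanding $\nu_h=\sum_{i,j}\alpha_i\alpha_j\nu_{h_i,h_j}$ via \eqref{eq:EnMeaRep}, one finds
\[
\nu_h=(\alpha_0-\alpha_1)(\alpha_0-\alpha_2)\nu_0+(\alpha_1-\alpha_0)(\alpha_1-\alpha_2)\nu_1+(\alpha_2-\alpha_0)(\alpha_2-\alpha_1)\nu_2.
\]
Setting $x=\alpha_0-\alpha_1$ and $y=\alpha_1-\alpha_2$, direct algebra gives $ab+bc+ca=0$ and $a+b+c=x^2+xy+y^2\geq 0$, so $\nu_h$ lies on the single nappe of the quadric cone $\{ab+bc+ca=0\}$ where $a+b+c\geq 0$. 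Conversely, given $(a,b,c)$ on this nappe, one has $a+c=(x+y)^2\geq 0$; taking $s=\sqrt{a+c}$ and setting $x=a/s$, $y=c/s$ (or handling the degenerate case $s=0$ separately) one checks $xy=-b$, recovering a harmonic $h$ with $\nu_h=a\nu_0+b\nu_1+c\nu_2$. Hence $\{\nu_h\}$ is exactly one nappe of $\partial S$.

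For $S\subseteq P$, I use convexity. In rotated coordinates with $u=(a+b+c)/\sqrt 3$ and $v,w$ orthonormal in $\{a+b+c=0\}$, one has $ab+bc+ca=u^2-\tfrac{1}{2}(v^2+w^2)$, so the positive nappe of $S$ is the standard Lorentz cone $u\geq\sqrt{(v^2+w^2)/2}$, which is convex. Every point in it is a nonnegative combination $\lambda_1\nu_{h_1}+\lambda_2\nu_{h_2}$ of two boundary elements, and so is a positive measure. The opposite nappe consists of negatives of such measures, so strictly $P$ is only one nappe and the theorem should be read in this sense. For the easy half of the converse, $\nu_A\geq 0$ forces $\nu_A(SG)=2(a+b+c)\geq 0$, placing $\nu_A$ in the correct nappe.

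The remaining inclusion $\nu_A\geq 0\Rightarrow ab+bc+ca\geq 0$ is the heart of the proof. I would pass to the basis $e_1=h_0-h_2$, $e_2=h_1-h_2$ of $\mathcal{H}/\mathbb{R}$ and use \eqref{eq:EnMeaRep} to identify the space of harmonic energy measures with $2\times 2$ symmetric matrices; a direct expansion shows that $(a,b,c)$ corresponds to the matrix
\[
A=\frac{1}{9}\begin{pmatrix}4a+b+c & c-2a-2b\\ c-2a-2b & a+4b+c\end{pmatrix},
\]
with $\det A=(ab+bc+ca)/9$. If $ab+bc+ca<0$ then $A$ is indefinite, with $\mathcal{E}$-orthogonal eigenvectors $h_\pm$ of opposite eigenvalue sign, giving $\nu_A=\lambda_+\nu_{h_+}-\lambda_-\nu_{h_-}$ with $\lambda_\pm>0$. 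Rescaling so $\mathcal{E}(h_+)=\mathcal{E}(h_-)=E$, the identity $\nu_{h_+}+\nu_{h_-}=(E/3)\nu$ from the introduction yields $R_{h_+}+R_{h_-}=E/3$ $\nu$-a.e. By Theorem 3.5(a), $\inf R_{h_+}=0$ on any cell, so one can find cells $C$ on which $\nu_{h_+}(C)/\nu(C)\to 0$ and consequently $\nu_{h_-}(C)/\nu(C)\to E/3$; on such $C$, $\nu_A(C)/\nu(C)\to -\lambda_-E/3<0$, contradicting $\nu_A(C)\geq 0$.

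The main obstacle is converting the statement ``$\inf R_{h_+}=0$'' into the existence of a concrete cell $C$ on which $\nu_{h_+}(C)/\nu(C)$ is arbitrarily small while $\nu_{h_-}(C)/\nu(C)$ stays bounded below by $E/3-\varepsilon$. This reduces to a routine application of Lebesgue differentiation for the Kusuoka measure along shrinking cells, combined with the rigid pointwise identity $R_{h_+}+R_{h_-}=E/3$; it is routine in spirit, but this is where the genuine measure-theoretic content of the argument sits.
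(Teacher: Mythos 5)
Your proposal is correct, and for the hard inclusion it takes a genuinely different route from the paper. The paper handles $P\subset S$ only indirectly: it shows $S\subset P$ (writing a point of $S$ as $\nu_h+\sum\delta_j\nu_j$ with $\delta_j\geq 0$), then shows that each $\nu_h$ lies on $\partial P$ by observing that $\nu_h-\varepsilon\nu$ cannot be positive since $\inf_C\nu_h(C)/\nu(C)=0$ by Theorem 3.5(a), and concludes ``since the two boundaries coincide, $S=P$'' --- a step that silently uses convexity of $P$ and ignores that $\{ab+bc+ca\geq 0\}$ has two nappes. You instead prove $P\subset S$ directly: identifying $a\nu_0+b\nu_1+c\nu_2$ with a symmetric form $A$ on $\mathcal{H}/\R$ with $\det A=(ab+bc+ca)/9$ (your matrix and determinant check out), diagonalizing against $\mathcal{E}$, and using $\nu_{h_+}+\nu_{h_-}=\frac{E}{3}\nu$ together with Theorem 3.5(a) to exhibit cells where an indefinite combination goes negative. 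Both arguments ultimately rest on the same key input ($\inf R_h=0$), but yours is more self-contained (you derive the parametrization $a=(\alpha_0-\alpha_1)(\alpha_0-\alpha_2)$, etc., rather than citing \cite{EMatrix}), cleanly isolates the one-nappe issue that the paper's statement glosses over, and avoids the implicit topological step. One remark: the final step you flag as the ``main obstacle'' is easier than you fear and needs no Lebesgue differentiation --- the identity $\nu_{h_+}+\nu_{h_-}=\frac{E}{3}\nu$ holds exactly as measures, hence on every cell, and the proof of Theorem 3.5(a) produces explicit points $x_\varepsilon$ with $\frac{d\nu_{h_+}}{d\nu}(x_\varepsilon)<\varepsilon$, hence (by the definition of the derivative as a limit of cell ratios) explicit cells $C$ with $\nu_{h_+}(C)/\nu(C)<\varepsilon$ and therefore $\nu_{h_-}(C)/\nu(C)>\frac{E}{3}-\varepsilon$, which is all you need.
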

\begin{proof}
We first show that $S \subset P$. We know from \cite{EMatrix} that the set of $(x,y,z)$ such that $x=\nu_h(F_0 SG),y=\nu_h(F_1 SG),z=\nu_h(F_2 SG)$ form the cone
$$
\frac{11}{25}(x+y+z)^2= x^2+y^2+z^2.
$$
%check this later
A simple computation shows that
$$
\frac{2}{5}
\left  (
\begin{array}{ccc}
3&1&1\\
1&3&1\\
1&1&3
\end{array}
\right)
$$
transforms $(a,b,c)$ to the corresponding coefficients $(x,y,z)$. Hence, the set of $(a,b,c)$ that corresponds to some $\nu_h$ is the cone
$$
ab+bc+ca=0,
$$
which is the boundary of $S$. Then a general point in $S$ has the form $(a,b,c)+(\delta_0,\delta_1,\delta_2)$ with all $\delta_j\geq0$, so it yields a positive measure $\nu_h+\delta_0\nu_0+\delta_1\nu_1+\delta_2\nu_2$ and so belongs to $P$.

Now we show that the coefficients that come from harmonic functions (the boundary of $S$) are on the boundary of $P$. Consider the energy measure $\nu_h-\varepsilon \nu$ for $\varepsilon>0$. Suppose, for contradiction, that this measure is positive. Then for any cell $C$, $(\nu_h-\varepsilon\nu)(C)>0$, so $\frac{\nu_h(C)}{\nu(C)}>\varepsilon>0$ for all $C$, but we know the infimum should be zero, so this is a contradiction. Therefore $(a-\varepsilon, b-\varepsilon, c-\varepsilon)\notin P$ and $(a,b,c)$ is on the boundary of $P$. Since the two boundaries coincide, $S=P$.
\end{proof}

Immediately, since the solid cone is convex, every positive energy measure is precisely a convex combination of two energy measures, each coming from a single harmonic function. But we can prove a little bit more.
\begin{cor}
Every positive energy measure of harmonic functions is a convex combination of positive energy measures of $h$ and $h^\perp$ for some harmonic function $h$, and vice versa.
\end{cor}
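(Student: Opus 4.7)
The converse (``and vice versa'') is immediate: $\nu_h$ and $\nu_{h^\perp}$ are individually positive energy measures, so by convexity of $P$ every convex combination lies in $P$.

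For the forward direction, I identify $a\nu_0+b\nu_1+c\nu_2$ with $(a,b,c) \in \R^3$ and use two inputs. By the preceding theorem, $P = S = \{ab+bc+ca\geq 0\}$, with boundary equal to the set of measures of the form $\nu_h$ for some harmonic $h$. Second, $\nu_h+\nu_{h^\perp} = \tfrac{\mathcal{E}(h)}{3}\nu$ together with $\mathcal{E}(h)=\nu_h(SG)=2(a+b+c)$ shows that, in coefficients, the pair $(\nu_h,\nu_{h^\perp})$ is symmetric about the axial point $m := \tfrac{\sigma}{3}(1,1,1)$, where $\sigma := a+b+c$.

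Given a nonzero $(a,b,c) \in S$ not proportional to $(1,1,1)$ (necessarily $\sigma > 0$, else $(a,b,c)=0$), I pass the line $\ell$ through $(a,b,c)$ and $m$, parametrized as $m+t\vec{v}$ with $\vec{v}:=(a,b,c)-m$, and show its two intersections with the boundary conic give the desired pair $(\nu_h,\nu_{h^\perp})$ with $(a,b,c)$ between them. Because $\vec{v}\cdot(1,1,1)=0$, the restriction of $xy+yz+zx$ to $\ell$ simplifies to $\tfrac{\sigma^2}{3} - \tfrac{t^2}{2}(v_0^2+v_1^2+v_2^2)$, which vanishes at $t = \pm t_0$ for some $t_0 > 0$. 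These two boundary points have coordinate sum $\sigma$ (as every point of $\ell$ does) and sum to $2m = \tfrac{2\sigma}{3}(1,1,1)$, so by the second input above they realize $(\nu_h,\nu_{h^\perp})$ for a common harmonic $h$.

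It remains to show $(a,b,c) = m + \vec{v}$ lies in the segment, equivalently $t_0 \geq 1$. A direct computation yields $v_0^2+v_1^2+v_2^2 = \tfrac{2\sigma^2}{3} - 2\tau$ with $\tau := ab+bc+ca$, whence $t_0^2 = \sigma^2/(\sigma^2 - 3\tau)$, and $t_0 \geq 1$ is equivalent to $\tau \geq 0$, our hypothesis (with equality exactly when $(a,b,c)$ is itself on the boundary). The degenerate case $(a,b,c) = \lambda(1,1,1)$ is handled by taking any harmonic $h$ with $\mathcal{E}(h) = 6\lambda$ and convex weight $\tfrac{1}{2}$. No genuine obstacle arises --- once the geometric picture of a chord of the boundary conic through the axial midpoint is in place, the algebra is routine.
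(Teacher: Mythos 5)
Your proof is correct, and it follows the same basic geometry as the paper's --- both arguments work in the planar section of the cone $S$ spanned by $\nu$ and the given measure, use the theorem's identification of $\partial P$ with the single-function measures $\nu_h$, and invoke the identity $\nu_h+\nu_{h^\perp}=\tfrac{1}{3}\mathcal{E}(h)\nu$. The difference is in execution. The paper argues qualitatively: the plane meets $\partial S$ in two lines, one carrying multiples of some $\nu_h$ and the other multiples of $\nu_{h^\perp}$, and $V\cap S$ is the convex hull of these lines; this produces a convex combination of $\nu_{ah}$ and $\nu_{bh^\perp}$ with the wrong normalization, which the paper then repairs by a final rescaling ($\nu_{rh}=r^2\nu_h$, choosing $r,s$ with $tr^{-2}+(1-t)s^{-2}=1$ and $ra=sb$). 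You instead choose the specific chord through the axial point $m=\tfrac{\sigma}{3}(1,1,1)$; since the two intersections with the boundary conic are symmetric about $m$ and share the coordinate sum $\sigma$, they are automatically an equal-energy pair $(\nu_h,\nu_{h^\perp})$, so no rescaling is needed, and the betweenness condition $t_0\ge 1\Leftrightarrow ab+bc+ca\ge 0$ is an explicit computation rather than a convex-hull assertion. Your version is more quantitative and arguably cleaner at the normalization step; the paper's is shorter on computation but needs the extra scaling argument. One small caution: your parenthetical ``necessarily $\sigma>0$'' does not follow from membership in the double cone $\{ab+bc+ca\ge 0\}$ alone (which contains $-(1,1,1)$); it follows because a \emph{positive} measure has $\sigma=\tfrac12\nu_{f,g}(SG)\ge 0$, and then $\sigma=0$ together with $\tau\ge 0$ forces $(a,b,c)=0$ --- worth saying explicitly, since the theorem's statement $P=S$ is itself slightly loose on this point.
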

\begin{proof}
For every harmonic function $h$, $\nu_h+\nu_{h^\perp}=c\nu$ for some $c>0$. This shows $\nu_h$, $\nu_{h^\perp}$, $\nu$ lie on the same two-dimensional subspace.

Suppose a two-dimensional subspace $W$ containing $\nu$ is given. $W$ intersects the boundary of the cone $S$ stated in the theorem on two and only two lines which contain energy measures of a single harmonic function. The above paragraph proves that one of the two lines contains $\nu_h$ for some $h$ while the other contains $\nu_{h^\perp}$.

Now, given any positive energy measure $\sigma$ which is not a multiple of $\nu$ (in which case, the conclusion is trivial), consider the two-dimensional subspace $V$ containing $\sigma$ and $\nu$. By the previous reasoning and the fact that $V\cap S$ is the closed convex hull of $V\cap \partial S$, there are harmonic functions $h$ and $h^{\perp}$ such that $\sigma$ is the convex combination of  $\nu_{ah}$ and $\nu_{bh^\perp}$ for some positive constants $a$ and $b$. It follows that for some $0\leq t\leq 1$, for any $r,s\geq0$,
$$\sigma = tr^{-2}\nu_{rah}+(1-t)s^{-2}\nu_{sbh^\perp}.$$
Choosing $r,s$ such that $tr^{-2}+(1-t)s^{-2}=1$ and $ra=sb$ concludes the proof.
%Now, given any non-zero positive energy measure $\sigma$(the zero case is trivial), consider the two-dimensional  subspace $V$ containing $\sigma$ and $\nu$. Let $P$ be the plane passing through $\sigma$ normal to $(1,1,1)$(in the sense of embedding the energy measure space into $\R^3$). And there are distinct functions $h$, $f$ lying in the intersection between $P$, $V$ and $S$. Assume $\nu_h=\sum a_i \nu_i$ and $\nu_f=\sum b_i \nu_i$. Since both $f$, $g$ lie on $P$, $\mathcal{E}(h)=2\sum a_i=2\sum b_i=\mathcal{E}(f)$. And the above discussion shows $f=h^\perp$ modulo constants. The converse is obviously true.
\end{proof}

%Old Version Cor
%%%%%%%%%%%%%%%%%%%%%%%%%%%%%%%%%%%%%%%%%%%%%%%%%%%%%%%%%%%%%%%%%%
%\begin{cor}
%Every positive energy measure of harmonic functions is a convex combination of two positive energy measures of a %single harmonic function, and vice versa.
%\end{cor}
%\begin{proof}
%The result follows directly from the fact that the solid circular cone mentioned in the theorem is a convex %object.
%\end{proof}
%%%%%%%%%%%%%%%%%%%%%%%%%%%%%%%%%%%%%%%%%%%%%%%%%%%%%%%%%%%%%%%%%%%

\section{Limited Continuity}
We have seen from the previous section that the derivative of an energy measure is not continuous; however, if we restrict the  derivative to the set of vertices $V_*$, it is continuous on the edges of every triangle. This can be seen in Figure \ref{cornerview}.

\begin{figure}
        \centering
        \includegraphics[height=4in]{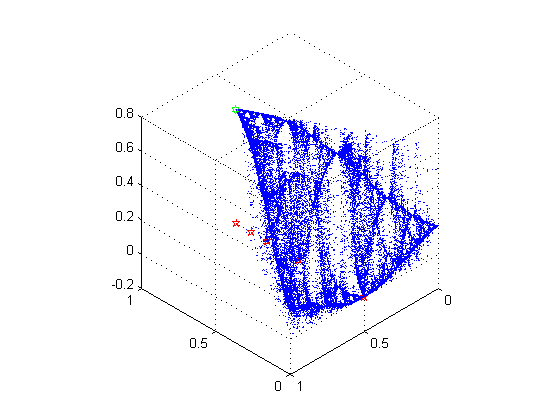}
        \caption{Radon-Nikodym Derivative of $\nu_0$ with red dots indicating minimum values and green dot indicating maximum value.}
        \label{cornerview}
 \end{figure}
 
By \eqref{eq:DiagE}, a simple computation shows that
\begin{equation}
\label{eq:LimitE}
\begin{split}
\lim_{m\to \infty} \left(\frac{5}{3}\right)^m E_0^m &= \frac{1}{40}
\left(\begin{array}{ccc}
42&-3&-3\\
14&-1&-1\\
14&-1&-1
\end{array}\right);\\
\lim_{m\to \infty} \left(\frac{5}{3}\right)^m E_1^m &= \frac{1}{40}
\left(\begin{array}{ccc}
-1&14&-1\\
-3&42&-3\\
-1&14&-1
\end{array}\right);\\
\lim_{m\to \infty} \left(\frac{5}{3}\right)^m E_2^m &= \frac{1}{40}
\left(\begin{array}{ccc}
-1&-1&14\\
-1&-1&14\\
-3&-3&42
\end{array}\right).\\
\end{split}
\end{equation}
Notice that all of the above are rank-one matrices.

%We know from \eqref{eq:LimitE} that
%$$
%40 \left(\lim_m
%\left(
%\frac{5}{3}
%\right)^m
%E_2^m\right)=
%\left(
%\begin{array}{ccc}
%-1&-1&14\\
%-1&-1&14\\
%-3&-3&42
%\end{array}
%\right)
%=:A.
%$$
We denote $A:=\lim_{m\to \infty} \left(\frac{5}{3}\right)^m E_2^m$ and $A_m:=\left( \frac{5}{3} \right)^m E_2^m, D_{m}:=A-A_m$.
We also know
$$E_wA = E_w\left(\begin{array}{c} 1\\1\\3
\end{array}\right)
\left(\begin{array}{ccc} -1&-1&14
\end{array}\right)$$
so
$$
\left(
\begin{array}{ccc}
-1&-1&14
\end{array}
\right)
E_w A
=
c_w\left(
\begin{array}{ccc}
-1 &-1 &14
\end{array}
 \right).
$$
In fact
$$
c_w=\left(
\begin{array}{ccc}
-1& -1& 14
\end{array}
 \right) E_w
 \left(
 \begin{array}{c}
 1\\
 1\\
 3
 \end{array}
 \right).
$$
We are going to prove, for any finite word $w$ consisting only $1$ and $2$, that $\left(\frac{5}{3}\right)^{|w|}c_w$\hspace{2pt} is bounded away from zero; to do this, we need the following lemmas.

%lemma 1
\begin{lem}
\label{lemma5.1}
Given any finite word $w$ consists of letters $1$ and $2$ only, and let
$$\left(
\begin{array}{c}
x\\y\\z
\end{array}
\right)
=
\frac{5}{2}
E_{w}
\left(
\begin{array}{c}
\nu_2(F_0 SG)\\ \nu_2(F_1 SG)\\ \nu_2(F_2 SG)
\end{array}
\right)
=
E_{w}
\left(
\begin{array}{c}
1\\1\\3
\end{array}
\right).$$
Then
$$
\left(
\begin{array}{ccc}
-1&-1&14
\end{array}
\right)E_2
\left(
\begin{array}{c}
x\\y\\z
\end{array}
\right) \geq
\left(
\begin{array}{ccc}
-1&-1&14
\end{array}
\right)E_1
\left(
\begin{array}{c}
x\\y\\z
\end{array}
\right).$$
\end{lem}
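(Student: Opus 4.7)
The plan is to reduce the matrix inequality to a single scalar polynomial inequality $x-4y+11z\geq 0$ by exploiting the eigenvector structure of $E_2$, and then to establish the reduced inequality via a monotonicity invariant for the boundary values of $h_2\circ F_w$ that is preserved under appending the letters $1$ and $2$.

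First, I would observe from the diagonalization \eqref{eq:DiagE} that the row $(-1,-1,14)$ is (up to the factor $40$) the second row of $P_2^{-1}$, hence a left eigenvector of $E_2$ with eigenvalue $\tfrac35$; thus $(-1,-1,14)E_2=\tfrac35(-1,-1,14)$. A direct row-by-matrix computation also yields $(-1,-1,14)E_1=(-\tfrac65,\tfrac95,\tfrac95)$. Subtracting, the desired inequality becomes $\tfrac35(x-4y+11z)\geq 0$, so it suffices to show $x-4y+11z\geq 0$.

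Next, I would interpret $(x,y,z)$ through the harmonic function $g:=h_2\circ F_w$ with boundary values $(a,b,c):=(g(q_0),g(q_1),g(q_2))$. Since $(1,1,3)=\tfrac52(\nu_2(F_0SG),\nu_2(F_1SG),\nu_2(F_2SG))^T$ and $E_w$ passes to the three main subcells of $F_wSG$, one has $(x,y,z)=\tfrac52(5/3)^{|w|}(\nu_g(F_0SG),\nu_g(F_1SG),\nu_g(F_2SG))^T$. Each $\nu_g(F_iSG)$ is an explicit quadratic in $(a,b,c)$, obtained by restricting $g$ to the subcell via the analogs of \eqref{eq:h_0Decompose} for $h_2$ and summing the squared differences of its new boundary values. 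Collecting the $a^2,b^2,c^2,ab,ac,bc$ coefficients in the combination yields a clean factorization,
$$x-4y+11z \;=\; 10\left(\tfrac{5}{3}\right)^{|w|}(a-c)(a+2b-3c).$$

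It therefore suffices to prove the invariant $b\leq a\leq c$ for the boundary values of $h_2\circ F_w$ whenever $w\in\{1,2\}^*$: then $a-c\leq 0$ and $a+2b-3c=(a-c)+2(b-c)\leq 0$, so the product is non-negative. I would prove the invariant by induction on $|w|$; the base case $w=\emptyset$ gives $(0,0,1)$, and the inductive step uses the explicit recursions
\begin{align*}
(a,b,c) &\longmapsto \bigl(\tfrac{2a+2b+c}{5},\,b,\,\tfrac{a+2b+2c}{5}\bigr) \qquad\text{(appending $1$),}\\
(a,b,c) &\longmapsto \bigl(\tfrac{2a+b+2c}{5},\,\tfrac{a+2b+2c}{5},\,c\bigr) \qquad\text{(appending $2$),}
\end{align*}
in which the new differences $a'-b'$ and $c'-a'$ are non-negative $\tfrac15$-linear combinations of $a-b$, $c-a$, and $c-b$, so the invariant persists. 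The main obstacle is the bookkeeping that produces the factored form $10(5/3)^{|w|}(a-c)(a+2b-3c)$, requiring expansion of nine squared linear forms and careful collection of six monomial coefficients; once that factorization is visible, the restriction $w\in\{1,2\}^*$ is precisely what is needed to keep $a$ and $b$ below $c$, and the conclusion follows.
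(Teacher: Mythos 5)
Your proposal is correct, and I verified the two computations it hinges on: $(-1,-1,14)$ is indeed a left eigenvector of $E_2$ with eigenvalue $\tfrac35$, so $(-1,-1,14)(E_2-E_1)=\tfrac35(1,-4,11)$, and expanding the three cell energies of a harmonic function with boundary values $(a,b,c)$ gives $\nu_g(F_0SG)-4\nu_g(F_1SG)+11\nu_g(F_2SG)=4(a-c)(a+2b-3c)$, which yields your factorization $x-4y+11z=10(5/3)^{|w|}(a-c)(a+2b-3c)$ exactly. The reduction to $x-4y+11z\geq 0$ is the same as the paper's (the paper states this equivalence explicitly and uses the same identity $(-1,-1,14)(E_2-E_1)=\tfrac35(1,-4,11)$ in its inductive step), but from there the two arguments genuinely diverge. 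The paper stays on the measure side: it runs an induction on $|w|$ carrying the pair of invariants $x-4y+11z\geq0$ and $z-x\geq0$ (together with positivity of $y,z$ coming from $\nu_2$ being a positive measure), verifying both by direct matrix arithmetic for each appended letter. You instead move to the function side, factor the quadratic form into two linear forms in the boundary values of $h_2\circ F_w$, and carry the single, more transparent invariant $b\leq a\leq c$, which persists under the $\tfrac15$-averaging recursions for appending $1$ or $2$. What your route buys is structural visibility — the non-negativity is manifest once the form factors, and the invariant has a clear meaning (monotone ordering of $h_2\circ F_w$ along the $[q_1,q_2]$ edge) rather than emerging from coefficient bookkeeping; the cost is the one-time expansion of the nine squared linear forms. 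Note also that your invariant is insensitive to the order of the letters in $w$, so the argument is robust to the paper's slightly ambiguous convention for the order of the factors in $E_w$. This function-side viewpoint is in the same spirit as the paper's own proof of Lemma 5.2, which likewise passes to boundary values of $h_2$ on subcells.
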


\begin{proof}
We prove this by induction. It is trivial for $w$ being the empty word. Suppose that the conclusion holds for $|w|=m$, which is equivalent to saying $x-4y+11z\geq0$.
Also assume for induction that $z \geq x$.
Consider first the word $w1$. We have
\begin{equation*}
\begin{split}
\left(
\begin{array}{ccc}
-1&-1&14
\end{array}
\right)(E_2 - E_1)E_1
\left(
\begin{array}{c}
x\\y\\z
\end{array}
\right)&=
\frac{3}{5}\left(
\begin{array}{ccc}
1&-4&11
\end{array}
\right)E_1
\left(
\begin{array}{c}
x\\y\\z
\end{array}
\right)\\
&= \frac{1}{25}(-9x-4y+21z)\\
&= \frac{1}{25}\big{(}(x-4y+11z)+10(z-x)\big{)}\\
&\geq 0.
\end{split}
\end{equation*}
Furthermore, the third entry of $E_1 \left( \begin{array}{c}x\\y\\z\end{array} \right)$ minus the first entry is
$$15(z-x) \geq 0.$$
Now we turn to the second case concerning the word $w2$.
\begin{equation*}
\begin{split}
\left(
\begin{array}{ccc}
-1&-1&14
\end{array}
\right)(E_2 - E_1)E_2
\left(
\begin{array}{c}
x\\y\\z
\end{array}
\right)&=
\frac{3}{5}\left(
\begin{array}{ccc}
1&-4&11
\end{array}
\right)E_2
\left(
\begin{array}{c}
x\\y\\z
\end{array}
\right)\\
&= \frac{1}{5}(-3y+19z)\\
&= \frac{1}{5}\big{(}(x-4y+11z)+(z-x)+y+7z\big{)}\\
&\geq 0.
\end{split}
\end{equation*}\\
Also, the third entry of $E_2 \left( \begin{array}{c}x\\y\\z\end{array} \right)$ minus the first entry is
$$\frac{1}{25}\big{(}-4x+y+11z\big{)}=\frac{1}{25}\big{(}y+4(z-x)+7z\big{)}\geq 0.$$
The positivity of $y$ and $z$ comes from the fact that
$$
E_w \left(
\begin{array}{c}
1\\1\\3
\end{array}
\right) =
\frac{2}{5}
\left(
\begin{array}{c}
\nu_2(F_{w0} SG)\\ \nu_2(F_{w1} SG)\\ \nu_2(F_{w2} SG)
\end{array}
\right).
$$
This completes the induction.
\end{proof}

\begin{lem}
\label{lemma5.2}
Under the same hypothesis and notations as in lemma \ref{lemma5.1}, we have
$$
\left(
\begin{array}{ccc}
-1&-1&14
\end{array}
\right)
E_1 E_2 E_w
\left(
\begin{array}{c}
1\\1\\3
\end{array}
\right) \geq
\left(
\begin{array}{ccc}
-1&-1&14
\end{array}
\right)
E_2 E_1 E_w
\left(
\begin{array}{c}
1\\1\\3
\end{array}
\right).
$$
\end{lem}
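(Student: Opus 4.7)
The plan is induction on $|w|$, paralleling the argument used to prove Lemma~\ref{lemma5.1}. Set $\vec u = E_w(1,1,3)^T = (x,y,z)^T$ and
$$\Delta(w) \;:=\; (-1,-1,14)\bigl(E_1 E_2 - E_2 E_1\bigr)\vec u,$$
so the claim is the statement about the sign of $\Delta(w)$ for every $w\in\{1,2\}^*$. A very useful preliminary simplification is visible from the diagonalization in \eqref{eq:DiagE}: the right $\tfrac{3}{5}$-eigenvector of $E_2$ is $(1,1,3)^T$, while the row vector $(-1,-1,14)$ annihilates the other two right eigenvectors $(1,1,\tfrac{1}{7})^T$ and $(-1,1,0)^T$. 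Hence $(-1,-1,14)\,E_2 = \tfrac{3}{5}(-1,-1,14)$, and an analogous short calculation gives $(-1,-1,14)\,E_1 = \tfrac{3}{5}(-2,3,3)$. Substituting,
$$\Delta(w) \;=\; (-1,-1,14)\,E_1\!\left(E_2 - \tfrac{3}{5}I\right)\vec u \;=\; \tfrac{3}{5}(-2,3,3)\!\left(E_2 - \tfrac{3}{5}I\right)\vec u,$$
and one matrix multiplication $(-2,3,3)(E_2 - \tfrac{3}{5}I)$ expresses $\Delta(w)$ as an explicit linear form $\alpha x + \beta y + \gamma z$ in the entries of $\vec u$. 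In particular, $\Delta(\varnothing) = 0$ because $(1,1,3)^T\in\ker(E_2-\tfrac{3}{5}I)$, giving the base case of the induction.

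For the inductive step I would replace $w$ by $1w$ or $2w$ — equivalently, $\vec u$ by $E_1 \vec u$ or $E_2 \vec u$ — and attempt to exhibit the new value $\alpha x' + \beta y' + \gamma z'$ as a nonnegative combination of the inductive quantity $\alpha x + \beta y + \gamma z$ together with the auxiliary invariants $x - 4y + 11z \geq 0$, $z - x \geq 0$, $y\geq 0$, $z\geq 0$ already verified during the proof of Lemma~\ref{lemma5.1}. Each of the two cases reduces to an explicit algebraic check of a small linear system, exactly in the spirit of the case analysis $w\mapsto w1$, $w\mapsto w2$ in that proof.

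The main obstacle is genuinely the existence of the required conic combination: it is a small linear-feasibility question whose solvability is not automatic from the eigenvalue data. If it fails for one of $E_1$ or $E_2$, the backup is a cleaner approach through the eigenbasis of $E_2$: decompose $\vec u = u_{1/15}\,e_{1/15} + u_{3/5}\,e_{3/5} + u_{1/5}\,e_{1/5}$, observe that $\Delta(w)$ depends only on $u_{1/15}$ and $u_{1/5}$ (because $E_2 - \tfrac{3}{5}I$ kills the $\tfrac{3}{5}$-component), and verify — by computing the images under $E_1$ of the three $E_2$-eigenvectors — that the cone $\{u_{1/15}\geq 0,\ u_{1/5}\geq 0\}$ in the $E_2$-eigenbasis is preserved by both $E_1$ and $E_2$. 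Since the initial vector $(1,1,3)^T$ lies on the boundary of this cone (all other eigencomponents vanish), iterating $E_1$ and $E_2$ keeps $\vec u$ in the cone, and the sign of $\Delta(w)$ follows by linearity for every $w\in\{1,2\}^*$.
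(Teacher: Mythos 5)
Your reduction is correct and is already sharp enough to settle the question. Indeed $(-1,-1,14)$ is the left $\tfrac{3}{5}$-eigenvector of $E_2$, a direct check gives $(-1,-1,14)E_1=\tfrac{3}{5}(-2,3,3)$, and hence the difference of the two sides of the lemma equals $\tfrac{3}{5}(-2,3,3)\bigl(E_2-\tfrac{3}{5}I\bigr)\vec u=\tfrac{1}{25}(9x-21y+4z)$ for $\vec u=(x,y,z)^T=E_w(1,1,3)^T$. But the obstacle you flagged is not a removable technicality: the required conic combination does not exist because the inequality itself fails. Take $w=(1)$, so that $\vec u=E_1(1,1,3)^T=\tfrac{1}{15}(1,7,7)^T$ (equivalently $\tfrac{5}{2}(\nu_2(F_1F_jSG))_j=\tfrac{5}{2}(\tfrac{2}{75},\tfrac{14}{75},\tfrac{14}{75})$). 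Then $9x-21y+4z=\tfrac{1}{15}(9-147+28)<0$; concretely, $(-1,-1,14)E_1E_2E_1(1,1,3)^T=\tfrac{2}{3}$ while $(-1,-1,14)E_2E_1E_1(1,1,3)^T=\tfrac{24}{25}$. In the measure form used in the paper's own proof this says $15\nu_2(F_{1212}SG)-\nu_2(F_{121}SG)=\tfrac{4500}{16875}$ is \emph{smaller} than $15\nu_2(F_{1122}SG)-\nu_2(F_{112}SG)=\tfrac{6480}{16875}$, which you can confirm from the harmonic extension algorithm: modulo constants, $h_2\circ F_{121}=\tfrac{1}{125}(10h_0+3h_1+14h_2)$ and $h_2\circ F_{112}=\tfrac{1}{125}(8h_0+4h_1+15h_2)$. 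Note also that your backup route, if the cone invariance holds, would certify the \emph{reverse} inequality: in the $E_2$-eigenbasis the difference is $-\tfrac{16}{35}u_{1/15}-\tfrac{6}{5}u_{1/5}$, and at the example above both components are positive, not negative.

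So the gap is not in your method but in the statement: for $|w|=1$ there is no ambiguity in the order of the product, and the lemma is false as written; the monotonicity of $w\mapsto\nu_2(F_wSG)$ along the bottom edge asserted just after it also fails already at level $3$ ($\nu_2(F_{111}SG)=\tfrac{122}{1125}>\tfrac{62}{1125}=\nu_2(F_{112}SG)$). The paper's proof is genuinely different from yours: it rewrites both sides as $\tfrac{5}{2}\bigl(15\nu_2(\text{child})-\nu_2(\text{cell})\bigr)$ for the cells $F_wF_2F_1SG$ and $F_wF_1F_2SG$ and evaluates the difference as the manifestly nonnegative quadratic form $\tfrac{4}{625}(\tfrac{5}{3})^m(6053a^2+852ab+144b^2)$ in the boundary data of $h_2$ on $F_wSG$. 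That output is irreconcilable with the direct evaluation above, so the error lies in that computation (or in the orientation of the two words being compared), not in your analysis. What Theorem \ref{thm:LimitedContinuity} actually needs from this subsection is only that $(\tfrac{5}{3})^{|w|}(-1,-1,14)E_w(1,1,3)^T$ be bounded below away from zero over $w\in\{1,2\}^*$; that statement still appears to be true (the value along $w=1^m$ tends to $\tfrac{5}{2}$ and numerically dominates the minimum), but it must be established by a different argument rather than deduced from this lemma.
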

\begin{proof}
Fix an $(m-2)$-length word $w$ consisting only letters $1$ and $2$. The column vector (1, 1, 3) corresponds to the energy measure $\nu_2$ obtained from only $h_2$.
By modulo a constant, we have, as shown in the figures of $F_w SG$, the values of $h_2$ and energies on the cells of $\nu_2$. Note that $0\leq a\leq b$.

\begin{center}
\includegraphics[scale=.5]{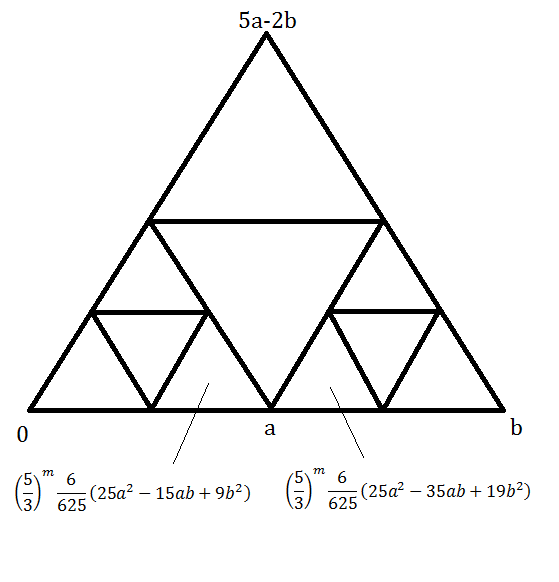}\\
\includegraphics[scale=.5]{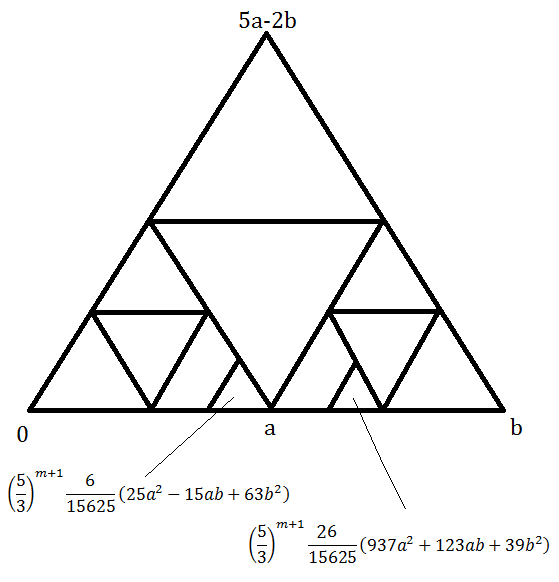}
\end{center}

A simple manipulation on matrices shows that
\begin{equation*}
\begin{split}
&\hspace{2pt} \left(
\begin{array}{ccc}
-1&-1&14
\end{array}
\right)
E_2 E_1 E_w
\left(
\begin{array}{c}
1\\1\\3
\end{array}
\right)\\
= & \hspace{2pt}
15
\left(
\begin{array}{ccc}
1&1&1
\end{array}
\right)
E_2 E_2 E_1 E_w
\left(
\begin{array}{c}
1\\1\\3
\end{array}
\right)
-
\left(
\begin{array}{ccc}
1&1&1
\end{array}
\right)
E_2 E_1 E_w
\left(
\begin{array}{c}
1\\1\\3
\end{array}
\right)\\
=& \hspace{2pt}\frac{5}{2} \left(15\nu_2(F_w F_1 F_2 F_2 SG) - \nu_2(F_w F_1 F_2 SG)\right).
\end{split}
\end{equation*}

Similarly,
\begin{equation*}
\begin{split}
&\hspace{2pt} \left(
\begin{array}{ccc}
-1&-1&14
\end{array}
\right)
E_1 E_2 E_w
\left(
\begin{array}{c}
1\\1\\3
\end{array}
\right)\\
= & \hspace{2pt}
15
\left(
\begin{array}{ccc}
1&1&1
\end{array}
\right)
E_2 E_1 E_2 E_w
\left(
\begin{array}{c}
1\\1\\3
\end{array}
\right)
-
\left(
\begin{array}{ccc}
1&1&1
\end{array}
\right)
E_1 E_2 E_w
\left(
\begin{array}{c}
1\\1\\3
\end{array}
\right)\\
=& \hspace{2pt}\frac{5}{2} \left(15\nu_2( F_w F_2 F_1 F_2 SG) - \nu_2( F_w F_2 F_1 SG)\right).
\end{split}
\end{equation*}
Whence, from the calculation in the figures,
\begin{equation*}
\begin{split}
&\left(
\begin{array}{ccc}
-1&-1&14
\end{array}
\right)
E_1 E_2 E_w
\left(
\begin{array}{c}
1\\1\\3
\end{array}
\right)
-
\left(
\begin{array}{ccc}
-1&-1&14
\end{array}
\right)
E_2 E_1 E_w
\left(
\begin{array}{c}
1\\1\\3
\end{array}
\right)\\
=
&\frac{4}{625} \left(\frac{5}{3}\right)^m(6053a^2 + 852ab + 144b^2)\geq 0.
\end{split}
\end{equation*}
\end{proof}

What we have proven in the above lemmas is that, for two level-$m$ cells $F_w SG$ and $F_{w'} SG$ whose words consist only of the letters $1$ and $2$, we have $\nu_2(F_w SG) \leq \nu_2(F_{w'} SG)$, provided that $F_w SG$ is to the left of $F_{w'}$, in the orientation in which $q_1$ is the left endpoint of $[q_1,q_2]$. %or orientation proposed in the introduction
It also follows immediately that for those word $w$ of length $m$, $$c_w \geq c_{1,...,1} = \frac{15}{5^m}+\frac{99}{4 \cdot 15^m}+\frac{5}{2} \cdot \left( \frac{3}{5}\right)^m$$
which implies $\left(\frac{5}{3}\right)^{|w|} c_w$ is bounded away from zero, for every such $w$.\\
We now turn to an upper bound for $\left( \frac{5}{3} \right)^{|w|} \|E_w\|$, where $\| \cdot \|$ is the operator $1-$norm.

\vspace{15pt}
\begin{lem}
For any harmonic function $f$ and word $w$ with $|w|=m$, $$\mathcal{E}_{F_w}(f) \leq 2 \hspace{1pt} \textrm{Osc}(f,SG)^2 \left( \frac{3}{5}\right)^m.$$
\end{lem}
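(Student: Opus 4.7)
The plan is to combine two ingredients: a universal energy-vs-oscillation bound $\mathcal{E}(g) \le 2\,\mathrm{Osc}(g,SG)^2$ for any harmonic $g$ on $SG$, and a geometric contraction $\mathrm{Osc}(f, F_w SG) \le (3/5)^m \mathrm{Osc}(f, SG)$ for any harmonic $f$. Since $f$ is harmonic, so is $f \circ F_w$, and the constancy of the sequence defined by \eqref{eq:EnergyOnLvlm} yields the identity
\begin{equation*}
\mathcal{E}_{F_w SG}(f) \;=\; \left(\tfrac{5}{3}\right)^m \mathcal{E}(f \circ F_w) \;=\; \left(\tfrac{5}{3}\right)^m \sum_{i<j}\bigl(f(F_w q_i)-f(F_w q_j)\bigr)^2,
\end{equation*}
so everything reduces to controlling the three vertex values of $f$ on the cell.

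For the first ingredient, I would order the boundary values of $g$ as $x \le y \le z$ and set $a=y-x,\ b=z-y\ge 0$. The three squared differences sum to $a^2+b^2+(a+b)^2 = 2(a^2+ab+b^2)$, while $\mathrm{Osc}(g,SG)^2=(a+b)^2$, and the inequality is equivalent to $ab\ge 0$. Applied to $g=f\circ F_w$, together with $\mathrm{Osc}(f\circ F_w, SG)=\mathrm{Osc}(f, F_w SG)$ (since $F_w$ maps onto the cell), this yields $\mathcal{E}(f\circ F_w)\le 2\,\mathrm{Osc}(f, F_w SG)^2$.

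For the second ingredient, I would induct on $m$, using the factorization $f\circ F_w = (f\circ F_{w_1\cdots w_{m-1}})\circ F_{w_m}$ to reduce to the single-step claim $\mathrm{Osc}(g\circ F_i, SG)\le (3/5)\,\mathrm{Osc}(g,SG)$ for any harmonic $g$. By the maximum principle, the oscillation of a harmonic function on $SG$ is attained on $V_0$, so it suffices to bound the three pairwise differences of $g\circ F_i$ at the vertices of $V_0$. Setting $u = g(q_0)-g(q_1)$ and $v = g(q_0)-g(q_2)$, the $\tfrac{2}{5},\tfrac{2}{5},\tfrac{1}{5}$ harmonic-extension rule makes the three pairwise differences of $g\circ F_0$ equal to $\tfrac{1}{5}(2u+v)$, $\tfrac{1}{5}(u+2v)$, and $\tfrac{1}{5}(v-u)$; each has modulus at most $\tfrac{3}{5}\max(|u|,|v|) \le \tfrac{3}{5}\,\mathrm{Osc}(g,SG)$, since each of the coefficient vectors $(2,1)$, $(1,2)$, $(-1,1)$ has $\ell^1$-norm $3$. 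The cases $i=1,2$ follow from the symmetry of the construction under permutations of $\{q_0,q_1,q_2\}$.

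Combining the two estimates gives
\begin{equation*}
\mathcal{E}_{F_w}(f) \;\le\; \left(\tfrac{5}{3}\right)^m \cdot 2 \left(\tfrac{3}{5}\right)^{2m} \mathrm{Osc}(f,SG)^2 \;=\; 2 \left(\tfrac{3}{5}\right)^m \mathrm{Osc}(f,SG)^2,
\end{equation*}
as claimed. The only mildly delicate step is the single-step oscillation contraction, but once one expresses the new pairwise differences in terms of the old, it is a routine linear computation. Both the constant $2$ and the rate $(3/5)^m$ are sharp, as witnessed by $f = h_0$ and $w = 0^m$: the recursion $a_{m+1}=(2+3a_m)/5$ with $a_0 = 0$ gives $a_m = 1-(3/5)^m$, so that $\mathcal{E}_{F_0^m SG}(h_0) = 2(3/5)^m = 2(3/5)^m\, \mathrm{Osc}(h_0, SG)^2$.
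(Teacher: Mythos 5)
Your proof is correct and follows essentially the same route as the paper: oscillation contraction $\mathrm{Osc}(f,F_wSG)\le(3/5)^m\mathrm{Osc}(f,SG)$ combined with the fact that a harmonic function's energy is at most $2\,\mathrm{Osc}^2$ (with equality exactly in the symmetric case $ab=0$, matching the paper's remark that the extremizer is symmetric). The only difference is that you supply elementary proofs of both ingredients, which the paper respectively cites from the literature and asserts without computation.
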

\begin{proof} By \cite{Bob},
$$\textrm{Osc}(f, F_w SG) \leq \left( \frac{3}{5} \right)^m \textrm{Osc}(f,SG).$$
Since for fixed oscillation, the greatest energy is attained by a function which is symmetric on $F_w SG$, which has energy $\left(\frac{5}{3} \right)^m 2\left(\mathrm{Osc}(f, F_w SG \right)^2$, we have
$$\mathcal{E}_{F_w}(f) \leq 2 \hspace{1pt} \textrm{Osc}(f,SG)^2 \left( \frac{3}{5}\right)^m.$$
\end{proof}

\begin{lem}
\label{lemma5.4}
There exists a constant $C$, independent of $m$, such that $$\| E_w \| \leq C \left( \frac{3}{5}\right)^m$$
for all $|w| = m$
\end{lem}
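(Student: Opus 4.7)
The plan is to use the previous lemma (the oscillation bound $\mathcal{E}_{F_w}(f)\le 2\,\mathrm{Osc}(f,SG)^2(3/5)^m$) to control $E_w$ acting on the three ``basis'' vectors that come from energy measures, and then pay a bounded change-of-basis cost to get the same control on the standard basis.

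First I would observe that the operator $1$-norm of $E_w$ is the maximum of $\|E_w e_j\|_1$ over the three standard basis vectors $e_j$, so it suffices to bound $\|E_w e_j\|_1$ uniformly by a constant times $(3/5)^m$. Next, define
\[
 v_i=\bigl(\nu_i(F_0 SG),\,\nu_i(F_1 SG),\,\nu_i(F_2 SG)\bigr)^{T},\qquad i=0,1,2.
\]
A direct computation (or the symmetry of SG together with $\nu_i(SG)=2$) gives $v_i=\frac{2}{5}(1,1,1)^{T}+\frac{4}{5}e_i$, and these three vectors are linearly independent. By the definition of $E_w$ and the fact that each $\nu_i$ is a positive measure,
\[
 \|E_w v_i\|_1=\sum_{j=0}^{2}\nu_i(F_w F_j SG)=\nu_i(F_w SG)=\mathcal{E}_{F_w SG}(h_i).
\]
Since $\mathrm{Osc}(h_i,SG)=1$, the previous lemma yields $\|E_w v_i\|_1\le 2(3/5)^m$ for each $i=0,1,2$.

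Now I would expand each $e_j$ in the basis $\{v_0,v_1,v_2\}$. Writing $e_j=\sum_i a_i^{(j)}v_i$ and solving the resulting linear system (using the explicit formula $v_i=\frac{2}{5}(1,1,1)^T+\frac{4}{5}e_i$) yields $a_j^{(j)}=1$ and $a_k^{(j)}=-\tfrac14$ for $k\ne j$, so $\sum_i|a_i^{(j)}|=\tfrac32$ uniformly in $j$. Then the triangle inequality gives
\[
 \|E_w e_j\|_1\le\sum_{i}|a_i^{(j)}|\,\|E_w v_i\|_1\le\tfrac{3}{2}\cdot 2\left(\tfrac{3}{5}\right)^{m}=3\left(\tfrac{3}{5}\right)^{m},
\]
so $\|E_w\|\le 3(3/5)^m$ and we may take $C=3$.

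There is really no obstacle here once the previous lemma is in hand; the only thing to keep straight is that although $E_w$ acts on all of $\mathbb{R}^3$, we only have a direct probabilistic/energy interpretation for inputs that are genuine energy-measure vectors, which is why the change of basis from $\{v_i\}$ to $\{e_j\}$ is needed and why the constant $C$ must absorb the $\ell^1$ norm of the inverse change-of-basis matrix.
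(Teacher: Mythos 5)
Your proof is correct and follows essentially the same route as the paper: both arguments reduce a general input vector to the energy-measure basis $\{v_i\}$, invoke the oscillation bound of the preceding lemma to get the $(3/5)^m$ decay, and absorb the change-of-basis cost into the constant $C$. The only difference is cosmetic --- you work columnwise with the explicit coefficients $a_j^{(j)}=1$, $a_k^{(j)}=-\tfrac14$ and additivity of $\nu_i$ over subcells, which yields the concrete value $C=3$, whereas the paper bounds the coefficients by an unspecified constant $B$ and estimates each $\nu_j(F_{wi}SG)$ separately.
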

\begin{proof} Consider $\|(x,y,z)\| = 1$. If we view $x,y,z$ as the energies of the level 1 cells, we know that $(x,y,z)$ corresponds to an energy measure $a_0 \nu_0 + a_1 \nu_1 + a_2 \nu_2$.
Since $(x,y,z)$ and $(a_0, a_1, a_2)$ differ only by a linear transformation, $\|(a_0, a_1, a_2)\| \leq B$ for some constant $B$ for all such triples $(a_0,a_1,a_2)$.
%which depends only on the norm of $(x,y,z)$.
Let a word $w$ be fixed, with $|w| = m$. Since from Lemma 5.3 we know that $\nu_j(F_{wi}SG)\leq 2 \left( \frac{3}{5} \right)^{m+1}$, we have
\begin{equation*}
\begin{split}
\bigg{\|} E_w \left( \begin{array}{c} x\\y\\z \end{array} \right) \bigg{\|} &= \sum_{i=0}^2 \bigg{|}\sum_{j=0}^2 a_j \nu_j (F_{wi} SG) \bigg{|}\\
&\leq \sum_{i=0}^2 \sum_{j=0}^2 \big{|} a_j \big{|} \big{|} \nu_j (F_{wi} SG) \big{|}\\
&\leq \sum_{i=0}^2 \sum_{j=0}^2 \big{|} a_j \big{|} \dot{} 2 \left( \frac{3}{5} \right)^{m+1}\\
&\leq 6B \left( \frac{3}{5} \right)^{m+1}.\\
\end{split}
\end{equation*}
Taking supremum over all $\|(x,y,z)\| = 1$ and letting $C = 6B \left( \frac{3}{5} \right)$ give
$$
\|E_w\| \leq C \left( \frac{3}{5} \right)^{m}.
$$
\end{proof}

We now give the main result of this section, the continuity on the three edges of a cell.
\begin{thm}
\label{thm:LimitedContinuity}
Given an energy measure $\nu_{f,g}$ and a cell $F_w SG$, the restriction of the derivative $\frac{d\nu_{f,g}}{d\nu}$ on the three edges restricted to the vertices $V_*$ is continuous.
\end{thm}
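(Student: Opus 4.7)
My plan is to express the Radon--Nikodym derivative at each edge vertex as an explicit matrix ratio and then use the common spectral gap of $E_1,E_2$ to show the oscillation of this ratio on level-$k$ subcells of the edge decays geometrically in $k$; this will give the desired continuity.

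WLOG consider the edge $[q_1,q_2]$. Every $x\in V_*\cap[q_1,q_2]$ is $F_w(q_j)$ with $w\in\{1,2\}^*$ and $j\in\{1,2\}$; the cells $F_wF_j^m SG$ shrink along the edge to $x$. Multiplying numerator and denominator of $\nu_{f,g}(F_wF_j^m SG)/\nu(F_wF_j^m SG)$ by $(5/3)^m$ and using the rank-one limits in \eqref{eq:LimitE} gives
\[
\frac{d\nu_{f,g}}{d\nu}(x)=\frac{L_j E_w \vec{v}_{f,g}(SG)}{L_j E_w \vec{v}_\nu(SG)},\qquad L_1=(-1,14,-1),\ L_2=(-1,-1,14),
\]
where $\vec{v}_\mu(SG)=(\mu(F_0SG),\mu(F_1SG),\mu(F_2SG))^T$. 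A direct computation gives $L_1E_2=L_2E_1$, so the two possible word representations of a midpoint vertex yield the same value. The denominator is bounded below by $c_0(3/5)^{|w|}$: Lemmas \ref{lemma5.1}--\ref{lemma5.2} give $(5/3)^{|w|}L_2E_w(1,1,3)^T\ge 5/2$, and Lemma \ref{lemma3.1} shows $L_2E_w\vec{v}_i\ge 0$ for each $i$, so $L_2E_w\vec{v}_\nu\ge L_2E_w\vec{v}_2\ge c_0(3/5)^{|w|}$ (the analogous bound for $L_1$ follows by symmetry).

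For continuity, let $x,y$ lie in a common level-$k$ cell $F_{w^*}SG$ with $w^*\in\{1,2\}^k$. Write $x=F_{w^*u_x}q_{j_x}$, $y=F_{w^*u_y}q_{j_y}$, and set $\vec{\alpha}=E_{w^*}\vec{v}_{f,g}$, $\vec{\beta}=E_{w^*}\vec{v}_\nu$, $R=(1,1,1)\vec{\alpha}/(1,1,1)\vec{\beta}$, and $\vec{\epsilon}=\vec{\alpha}-R\vec{\beta}$ (so $(1,1,1)\vec{\epsilon}=0$). Then
\[
\frac{d\nu_{f,g}}{d\nu}(x)-R=\frac{L_{j_x}E_{u_x}\vec{\epsilon}}{L_{j_x}E_{u_x}\vec{\beta}},
\]
with denominator $\ge c_0(3/5)^{k+|u_x|}$ by the previous paragraph applied to the concatenated word, and numerator $\le C(3/5)^{|u_x|}\|\vec{\epsilon}\|$ by Lemma \ref{lemma5.4}. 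To bound $\|\vec{\epsilon}\|$ I would invoke the spectral gap: $E_1$ and $E_2$ share the spectrum $\{1/15,3/5,1/5\}$ with a strict gap at the top, so the noncommuting product $E_{w^*}$ is asymptotically rank one in the sense that $E_{w^*}\vec{v}=a(\vec{v})\eta_{w^*}+\vec{r}(\vec{v})$ with $\|\vec{r}(\vec{v})\|\le C(1/5)^k\|\vec{v}\|$ for some unit vector $\eta_{w^*}$ and linear functional $a$. Applied to $\vec{v}_{f,g}$ and $\vec{v}_\nu$, the sum-zero constraint pins $R$ to $a(\vec{v}_{f,g})/a(\vec{v}_\nu)+O((1/3)^k)$, giving $\|\vec{\epsilon}\|=O((1/5)^k)$. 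Combining the bounds, $|D(x)-R|,|D(y)-R|=O((1/3)^k)$, whence $|D(x)-D(y)|=O((1/3)^k)\to 0$ as $k\to\infty$, proving continuity.

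The main obstacle is the quantitative rank-one approximation for $E_{w^*_k}\cdots E_{w^*_1}$: iterating a single matrix $E_i$ in its own eigenbasis cleanly gives the needed $(1/5)^k$ decay of subdominant modes, but the mixed noncommuting product forces one to track subdominant eigencomponents through factors that express two distinct eigenbases. A workable approach is to fix a common invariant cone (e.g.\ the positive orthant, which all $E_i$'s preserve) and apply Birkhoff's contraction theorem; the uniform projective contraction rate then comes out to the ratio $(1/5)/(3/5)=1/3$ of the two largest eigenvalues, which is exactly what the estimate above needs.
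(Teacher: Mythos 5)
Your overall strategy is viable and shares the paper's two key ingredients (the lower bound on $\left(\tfrac{5}{3}\right)^{|w|}(-1,-1,14)E_w(1,1,3)^T$ from Lemmas \ref{lemma5.1}--\ref{lemma5.2}, and the norm bound of Lemma \ref{lemma5.4}), but it is organized differently: you estimate the oscillation of the derivative over all vertices in a common cell $F_{w^*}SG$, which forces you to prove a quantitative rank-one approximation for the \emph{mixed} product $E_{w^*}$. That is exactly the step you leave open, and your proposed fix does not work. First, the $E_i$ do not preserve the positive orthant (e.g.\ the first row of $E_0$ is $(47/75,\,-1/25,\,-1/25)$), so Birkhoff's theorem cannot be applied there. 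The correct invariant cone is the cone of subcell-measure vectors of positive energy measures, $\frac{11}{25}(x+y+z)^2\geq x^2+y^2+z^2$, but each $E_i$ maps the extreme rays of that cone (measures $\nu_h$ of single harmonic functions) to extreme rays, so the projective diameter of $E_i(K)$ in $K$ is infinite and Birkhoff's contraction coefficient is $1$: no uniform contraction follows. Indeed the claimed uniform projective rate $1/3$ per step is false --- the induced boundary maps are exactly the circle maps of Section 7, with $g_j'$ ranging over $[1/3,3]$, so each map \emph{expands} near its repelling fixed point.

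The needed estimate $\|\vec\epsilon\|=O((1/5)^k)$ is in fact true for $w^*\in\{1,2\}^k$, but the honest route to it is not projective contraction: one uses that $E_{w^*}$ is conjugate to $Q\mapsto A_{w^*}^TQA_{w^*}$ on quadratic forms with $|\det A_{w^*}|=(1/5)^k$, so that the two-sided bound $\|E_{w^*}\|\asymp(3/5)^k$ (upper bound from Lemma \ref{lemma5.4}, lower bound from Lemmas \ref{lemma5.1}--\ref{lemma5.2}) forces the second singular value of $E_{w^*}$ to be $\asymp(1/5)^k$; one must then also check that the sum-zero constraint $(1,1,1)E_{w^*}\vec\epsilon=0$ kills the top singular component uniformly. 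All of this is doable but is precisely the work the paper's proof is designed to avoid: by writing a nearby vertex as $F_wF_2^nF_{w'}q_2$ and splitting $A_n=\left(\tfrac{5}{3}\right)^nE_2^n=A-D_n$, the exactly rank-one matrix $A$ coming from the \emph{pure} power $E_2^n$ sits between $E_{w'}$ and the data, so $E_{w'}A$ cancels identically in the ratio and only $\|E_{w'}D_n\|\leq\|E_{w'}\|\,\|D_n\|$ needs control --- no spectral analysis of mixed products is ever required. As written, your proof has a genuine gap at its central step; either supply the singular-value argument above or restructure along the paper's lines.
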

\begin{proof}
Since every point $x$ of $V_*$ that is on the edge of a cell is the junction point of two cells (boundary points excepted)
%Because of symmetry and possibility to replace $C$ by another cell,
it suffices to show that the restriction of the derivative $\frac{d\nu_{f,g}}{d\nu}$ on the edge $F_w [q_1, q_2]$ is continuous at $F_w q_2$, where $[q_1, q_2]$ is the edge connecting $q_1$ and $q_2$. Continuity on the other side of $F_w q_2$ follows from symmetry and arbitrary choice of cell. Let

$$X=\left(
\begin{array}{c}
\nu_{f,g}(F_{w0}SG)\\ \nu_{f,g}(F_{w1}SG)\\ \nu_{f,g}(F_{w2}SG)\\
\end{array}\right),\qquad
\Xi=\left(\begin{array}{c} \alpha\\ \beta\\ \gamma\end{array}\right)=\left(
\begin{array}{c}
\nu(F_{w0}SG)\\ \nu(F_{w1}SG)\\ \nu(F_{w2}SG)\\
\end{array}\right).
$$
The points of $V_*$ in the interval $F_wF_2^n[q_1,q_2]$ with the exception of $F_wF_2^n(q_1)$ are of the form $F_wF_2^nF_{w'}q_2$. We know that the Radon-Nikodym derivative at these points converges to $\frac{d\nu_{f,g}}{d\nu}(F_wq_2)$ uniformly in $w'$, establishing continuity of the derivative at $F_wq_2$. To do so, fix $w'$ with $|w'|=m$ and observe that
$$\frac{d\nu_{f,g}}{d\nu}(F_wF_2^nF_{w'}q_2)=\frac{
\left(\begin{array}{ccc}-1&-1&14\end{array}\right)E_{w'}A_nX
}{
\left(\begin{array}{ccc}-1&-1&14\end{array}\right)E_{w'}A_n\Xi
}=\frac{
\left(\begin{array}{ccc}-1&-1&14\end{array}\right)E_{w'}(A-D_n)X
}{
\left(\begin{array}{ccc}-1&-1&14\end{array}\right)E_{w'}(A-D_n)\Xi
}.$$
Since $A$ is rank $1$ we have
$$\frac{
\left(\begin{array}{ccc}-1&-1&14\end{array}\right)E_{w'}AX
}{
\left(\begin{array}{ccc}-1&-1&14\end{array}\right)E_{w'}A\Xi
}=\frac{
\left(\begin{array}{ccc}-1&-1&14\end{array}\right)X
}{
\left(\begin{array}{ccc}-1&-1&14\end{array}\right)\Xi
}=\frac{d\nu_{f,g}}{d\nu}(F_wq_2).$$
Also, from Lemmas \ref{lemma5.1} and \ref{lemma5.2} we know $\left(\frac{5}{3}\right)^m\left(\begin{array}{ccc}-1&-1&14\end{array}\right)E_{w'}A\Xi \geq \frac{5}{2}(14\gamma-\alpha-\beta)>0$, while for sufficiently large $n$ independent of $m$ we have $\|E_{w'}D_n\|\leq \delta\left(\frac{3}{5}\right)^m$ because of Lemma \ref{lemma5.4} and the fact that $A_n\to A$ in operator norm. The result follows.

\end{proof}

The above idea uses only the lower bound for
$\left(\frac{5}{3}\right)^m \left( \begin{array}{ccc} -1&-1&14 \end{array} \right) E_w \left( \begin{array}{c} 1\\1\\3 \end{array} \right)$ and upper bound for $\left(\frac{5}{3}\right)^m \|E_w\|$.
\section{Average Value of the Radon-Nikodym Derivative}
We have shown how to calculate the values of the derivative on the vertices in $V_*$; moreover, we have continuity on the edges of every triangle. We would like to relate the average of derivatives on the whole cell to a weighted average on three vertices of the cell; that is, to find, on a cell $C$, positive real numbers $\{b_j\}_{j=0}^2$ whose sum is $1$ satisfying
$$\frac{\nu_{f,g}(C)}{\nu(C)}=\frac{1}{\nu(C)}\int_C{\frac{d\nu_{f,g}}{d\nu}d\nu}=\sum_{r_j \in \partial C}{b_j \frac{d\nu_{f,g}}{d\nu}(r_j)}$$
for every energy measure of harmonic functions $\nu_{f,g}$. In this section, we will show that the coefficients $b_j$ exist and discover some characterizations of them.
\subsection{Existence}
Recall from Theorem \ref{thm:MMatrices}, that we have
$$
\left(
\begin{array}{c}
\nu_0\\ \nu_1\\ \nu_2
\end{array}
\right)=
M_0 \left(
\begin{array}{c}
\nu_0\\ \nu_1\\ \nu_2
\end{array}
\right) \circ F_0^{-1}
+
M_1 \left(
\begin{array}{c}
\nu_0\\ \nu_1\\ \nu_2
\end{array}
\right) \circ F_1^{-1}
+
M_2 \left(
\begin{array}{c}
\nu_0\\ \nu_1\\ \nu_2
\end{array}
\right) \circ F_2^{-1}
$$
and
$$
\left(
\begin{array}{c}
\nu_0 (F_j A)\\ \nu_1 (F_j A)\\ \nu_2 (F_j A)
\end{array}
\right) =
M_j \left(
\begin{array}{c}
\nu_0 ( A)\\ \nu_1 ( A)\\ \nu_2 (A)
\end{array}
\right).
$$
Letting $M_w=M_{w_1}M_{w_2}\cdots M_{w_m}$ we see that
$$
\left(
\begin{array}{c}
\nu_0 (F_w A)\\ \nu_1 (F_w A)\\ \nu_2 (F_w A)
\end{array}
\right) =
M_w \left(
\begin{array}{c}
\nu_0 ( A)\\ \nu_1 ( A)\\ \nu_2 (A)
\end{array}
\right).
$$

\begin{thm}
On each cell $C=F_w SG$, there exist unique $(b_0, b_1, b_2)\in \mathbb{R}^3$ with $b_0+b_1+b_2=1$ such that
$$\textrm{Avg}_{F_w SG}\frac{d\nu_{f,g}}{d\nu}=\frac{1}{\nu(C)}\int_C\frac{d\nu_{f,g}}{d\nu}d\nu=\sum_{j=0}^2{b_j\frac{d\nu_{f,g}}{d\nu}\big{(}F_w(q_j)\big{)}}$$
for all energy measure (not necessarily positive) $\nu_{f,g}$ on $SG$. Explicitly,
\begin{equation}
\label{eq:6.1}
b_j= \frac{1}{6}+ \frac{1}{2}\frac{\text{sum of column }j \text{ of }M_w}{\text{sum of all entries of } M_w}.
\end{equation}
\end{thm}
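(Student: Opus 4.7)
The plan is to reduce the problem to a $3\times 3$ linear system via linearity and solve it explicitly using the top eigenstructure of the matrices $M_i$. Since every energy measure $\nu_{f,g}$ is a linear combination of $\nu_0,\nu_1,\nu_2$ and both sides of the averaging identity are linear in $\nu_{f,g}$, it suffices to find $(b_0,b_1,b_2)$ making
$$\frac{\nu_i(F_wSG)}{\nu(F_wSG)}=\sum_{j=0}^{2}b_j\,R_i(F_wq_j),\qquad i=0,1,2,$$
hold, where $R_i:=d\nu_i/d\nu$. Summing over $i$ and using $\sum_i R_i\equiv 1$ forces $\sum_j b_j=1$ automatically, so only three equations in three unknowns remain.

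For the left-hand side, iterating Theorem \ref{thm:MMatrices} from the base values $\nu_i(SG)=2$ (which follow by symmetry from $\nu(SG)=6$) gives
$$\bigl(\nu_0(F_wSG),\,\nu_1(F_wSG),\,\nu_2(F_wSG)\bigr)^T=2\,M_w\mathbf{1},\qquad \nu(F_wSG)=2\,\mathbf{1}^TM_w\mathbf{1},$$
where $\mathbf{1}=(1,1,1)^T$. For the vertex values, I would approach $F_wq_j$ along the canonical sequence of subcells $F_wF_j^mSG$. A direct diagonalization of each $M_j$ yields the simple top eigenvalue $3/5$ with right eigenvector $e_j$, where $e_0=(4,1,1)^T$, $e_1=(1,4,1)^T$, $e_2=(1,1,4)^T$, while the remaining eigenvalues $1/5$ and $1/15$ are strictly smaller. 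Thus $M_j^m\cdot 2\mathbf{1}\sim c_j(3/5)^me_j$ for some nonzero $c_j$, and the common factor cancels in the ratio:
$$R_i(F_wq_j)=\lim_{m\to\infty}\frac{(M_wM_j^m\cdot 2\mathbf{1})_i}{\mathbf{1}^TM_wM_j^m\cdot 2\mathbf{1}}=\frac{(M_we_j)_i}{\mathbf{1}^TM_we_j}.$$

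The crucial algebraic observation is $e_0+e_1+e_2=6\mathbf{1}$, which lets me rewrite the left-hand side of the target identity as
$$\frac{(M_w\mathbf{1})_i}{\mathbf{1}^TM_w\mathbf{1}}=\frac{\sum_j(M_we_j)_i}{\sum_j\mathbf{1}^TM_we_j}.$$
Equating this with $\sum_j\tfrac{b_j}{\mathbf{1}^TM_we_j}(M_we_j)_i$ and exploiting that $\{M_we_j\}_{j=0,1,2}$ is a linearly independent set (each $M_i$ has nonzero determinant, so $M_w$ is invertible, and $e_0,e_1,e_2$ are manifestly independent) forces
$$b_j=\frac{\mathbf{1}^TM_we_j}{\sum_k\mathbf{1}^TM_we_k}=\frac{\mathbf{1}^TM_we_j}{6\,\mathbf{1}^TM_w\mathbf{1}}.$$
The same linear independence furnishes uniqueness. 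Finally, decomposing $e_j=3\hat{e}_j+\mathbf{1}$ (with $\hat{e}_j$ the $j$-th standard basis vector) yields $\mathbf{1}^TM_we_j=3\cdot(\text{sum of column }j\text{ of }M_w)+\mathbf{1}^TM_w\mathbf{1}$, which converts the displayed formula into the explicit expression \eqref{eq:6.1}.

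The main technical step is the limit computation for $R_i(F_wq_j)$: one must identify the dominant eigenvector of $M_j$ and ensure that the subleading contributions genuinely wash out in the ratio. Beyond this point, the proof is essentially linear algebra on a $3\times 3$ system together with the algebraic identity $e_0+e_1+e_2=6\mathbf{1}$.
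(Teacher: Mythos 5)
Your proposal is correct and follows essentially the same route as the paper: reduce to a $3\times 3$ linear system via the basis $\nu_0,\nu_1,\nu_2$, express cell measures through $M_w\mathbf{1}$ and vertex derivatives through the limit along $F_wF_j^m SG$, then solve and prove uniqueness via the linear independence of the images of three independent vectors under the invertible $M_w$ (your $M_we_j=3z_j+z$ are exactly the paper's $z_j+\tfrac13 z$ up to scale, and your identity $e_0+e_1+e_2=6\mathbf{1}$ is the same cancellation the paper uses in its substitution check). The only difference is that you derive the corner values $R_i(q_j)\in\{\tfrac23,\tfrac16\}$ from the top eigenvector $(4,1,1)$ of $M_0$ with eigenvalue $\tfrac35$, whereas the paper simply quotes these values as known; this is a welcome self-contained addition, not a different argument.
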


\begin{proof}
We first introduce some notation. We let

\begin{equation*}
\begin{split}
z_0=M_w \left( \begin{array}{c}
1\\ 0\\ 0
\end{array}
\right)
,
z_1&=M_w \left( \begin{array}{c}
0\\ 1\\ 0
\end{array}
\right)
,
z_2=M_w \left( \begin{array}{c}
0\\ 0\\ 1
\end{array}
\right)
,
z=z_0+z_1+z_2.
\end{split}
\end{equation*}

Now suppose $\nu_{f,g}=\alpha_0 \nu_0+\alpha_1 \nu_1+\alpha_2 \nu_2$. Then we have
$$
\nu_{f,g} (F_w SG)= 2\left(
\begin{array}{ccc}
\alpha_0&\alpha_1&\alpha_2
\end{array}
\right)
z,\hspace{10pt}
\nu (F_w SG)= 2\left(
\begin{array}{ccc}
1&1&1
\end{array}
\right)
z
$$
so the average value of the Radon-Nikodym derivative of $\nu_{f,g}$ on the cell $F_w SG$ is
$$
\frac{\int_{F_w SG} \frac{d \nu_{f,g}}{d\nu} d\nu}{\int_{F_w SG} d \nu}= \frac{\nu_{f,g} (F_w SG)}{\nu(F_w SG)}=\frac{\left(
\begin{array}{ccc}
\alpha_0&\alpha_1&\alpha_2
\end{array}
\right)
z}{\left(
\begin{array}{ccc}
1&1&1
\end{array}
\right)
z}.
$$

We also note that
\begin{equation*}
\begin{split}
\frac{d\nu_{f,g}}{d\nu}(F_w q_j)&= \lim_{m \to \infty} \frac{\nu_{f,g} (F_w F_j^m SG)}{\nu (F_w F_j^m SG)}\\
&= \lim_{m \to \infty}
\frac{\left(
\begin{array}{ccc}
\alpha_0&\alpha_1&\alpha_2
\end{array}
\right)
M_w
\left(
\begin{array}{c}
\frac{d\nu_0}{d\nu}(q_j)\\\frac{d\nu_1}{d\nu}(q_j)\\\frac{d\nu_2}{d\nu}(q_j)
\end{array}
\right)}{\left(
\begin{array}{ccc}
1&1&1
\end{array}
\right)
M_w
\left(
\begin{array}{c}
\frac{d\nu_0}{d\nu}(q_j)\\\frac{d\nu_1}{d\nu}(q_j)\\\frac{d\nu_2}{d\nu}(q_j)
\end{array}
\right)}
\end{split}
\end{equation*}
and we know that
$\frac{d\nu_{j}}{d\nu}(q_k)$ is equal to $\frac{2}{3}$ if $j=k$ and $\frac{1}{6}$ otherwise.
%perhaps prove this

So
\begin{equation*}
\begin{split}
\frac{d\nu_{f,g}}{d\nu}(F_w q_0)&=
\frac{\left(
\begin{array}{ccc}
\alpha_0&\alpha_1&\alpha_2
\end{array}
\right)
M_w
\left(
\begin{array}{c}
\frac{2}{3}\\\frac{1}{6}\\\frac{1}{6}
\end{array}
\right)}{\left(
\begin{array}{ccc}
1&1&1
\end{array}
\right)
M_w
\left(
\begin{array}{c}
\frac{2}{3}\\\frac{1}{6}\\\frac{1}{6}
\end{array}
\right)} \\
&= \frac{
\left(
\begin{array}{ccc}
\alpha_0&\alpha_1&\alpha_2
\end{array}
\right)
\left(
z_0 + \frac{1}{3}z
\right)
}{
\left(
\begin{array}{ccc}
1&1&1
\end{array}
\right)
\left(
z_0 + \frac{1}{3}z
\right)
};
\end{split}
\end{equation*}
%%%%%%%%%%%%%%%%%%%%%%
and similarly
\begin{equation*}
\begin{split}
\frac{d\nu_{f,g}}{d\nu}(F_w q_1)
&= \frac{
\left(
\begin{array}{ccc}
\alpha_0&\alpha_1&\alpha_2
\end{array}
\right)
\left(
z_1 +\frac{1}{3} z
\right)
}{
\left(
\begin{array}{ccc}
1&1&1
\end{array}
\right)
\left(
z_1 + \frac{1}{3}z
\right)
};\\
%%%%%%%%%%%%%%%%%%%%%%%
\frac{d\nu_{f,g}}{d\nu}(F_w q_2)
&= \frac{
\left(
\begin{array}{ccc}
\alpha_0&\alpha_1&\alpha_2
\end{array}
\right)
\left(
z_2 + \frac{1}{3} z
\right)
}{
\left(
\begin{array}{ccc}
1&1&1
\end{array}
\right)
\left(
z_2 + \frac{1}{3}z
\right)
}
.
\end{split}
\end{equation*}

Now we suppose that we can write the average of the derivative on the whole cell as a weighted average of the value of the derivative on the boundary points; that is, we write

\begin{equation*}
\begin{split}
\textrm{Avg}_{F_w SG}\frac{d\nu_{f,g}}{d\nu}
&=\frac{\left(
\begin{array}{ccc}
\alpha_0&\alpha_1&\alpha_2
\end{array}
\right)z
}{\left(
\begin{array}{ccc}
1&1&1
\end{array}
\right)z} \\
&= b_0 \frac{d\nu_{f,g}}{d\nu}(F_w q_0) + b_1 \frac{d\nu_{f,g}}{d\nu}(F_w q_1) + b_2 \frac{d\nu_{f,g}}{d\nu}(F_w q_2)\\
&= \left(
\begin{array}{ccc}
\alpha_0&\alpha_1&\alpha_2
\end{array}
\right)
\left(
\sum_{j=0}^2{ \frac{b_j\left(
z_j +\frac{1}{3} z
\right)
}{
\left(
\begin{array}{ccc}
1&1&1
\end{array}
\right)
\left(
z_j + \frac{1}{3}z
\right)}}
\right).
%.
\end{split}
\end{equation*}
Then it is implied that
\begin{equation}
\label{eq:6.2}
\sum_{j=0}^2{ \frac{b_j\left(
z_j +\frac{1}{3} z
\right)
}{
\left(
\begin{array}{ccc}
1&1&1
\end{array}
\right)
\left(
z_j + \frac{1}{3}z
\right)}}
=
\frac{z}{\left(\begin{array}{ccc}1&1&1 \end{array}\right)z}.
\end{equation}

We also see that multiplying both sides of this equation by $\left( \begin{array}{ccc}
1&1&1
\end{array}\right)$ gives $b_0+b_1+b_2=1$.

The solution to \eqref{eq:6.2} is
\begin{equation*}
\begin{split}
b_j&= \frac{\left(\begin{array}{ccc}
1&1&1
\end{array} \right)
\left( \frac{1}{6}z + \frac{1}{2}z_j \right)
}{
\left( \begin{array}{ccc}
1&1&1
\end{array}
\right)z},
\end{split}
\end{equation*}
as can be seen by substitution. The solution is unique because the construction of the $z_j+\frac{1}{3}z$ from the invertible matrix $M_w$ ensures that they are linearly independent.

Noting that $\left( \begin{array}{ccc}
1&1&1
\end{array} \right)z_j$ is the sum of the entries in column $j$ of $M_w$ and $\left( \begin{array}{ccc}
1&1&1
\end{array} \right)z$ is the sum of all entries of $M_w$. This yields \eqref{eq:6.1}.
\end{proof}
Later we will show that the $b_j$ values lie between $0$ and $\frac{2}{3}$; thus, the above theorem shows that we can relate the average value of the derivative on the whole cell to the weighted average value of the derivative at the vertices of the cell(one set of weights works for all energy measures). Since the $b_j$ values depend on the word $w$, from now on we will denote $b_j$ on $F_w SG$ as $b_j^{(w)}$.

\subsection{Characterization}
We now have coefficients $b_j^{(w)}$ for the cell $F_w SG$ to calculate the average of derivatives on the whole cell with the values on vertices of the cell.  Next, we show they can be calculated recursively from cells to subcells, or alternatively using the Kusuoka measure.

The following theorem gives the first way to calculate all the $b$'s by calculating them recursively.
\begin{thm}
\label{th:bRelation}
Using the previous notations, we have the following relationship from cell to subcells:
\begin{equation*}
%\begin{split}
\begin{array}{ccc}
b_0^{(w0)}=\frac{9b_0^{(w)}}{13b_0^{(w)}+b_1^{(w)}+b_2^{(w)}}
&b_1^{(w0)}=\frac{2b_0^{(w)}+2b_1^{(w)}-b_2^{(w)}}{13b_0^{(w)}+b_1^{(w)}+b_2^{(w)}}
&b_2^{(w0)}=\frac{2b_0^{(w)}-b_1^{(w)}+2b_2^{(w)}}{13b_0^{(w)}+b_1^{(w)}+b_2^{(w)}}
\\
b_0^{(w1)}=\frac{2b_0^{(w)}+2b_1^{(w)}-b_2^{(w)}}{b_0^{(w)}+13b_1^{(w)}+b_2^{(w)}}
&b_1^{(w1)}=\frac{9b_1^{(w)}}{b_0^{(w)}+13b_1^{(w)}+b_2^{(w)}}
&b_2^{(w1)}=\frac{-b_0^{(w)}+2b_1^{(w)}+2b_2^{(w)}}{b_0^{(w)}+13b_1^{(w)}+b_2^{(w)}}
\\
b_0^{(w2)}=\frac{2b_0^{(w)}-b_1^{(w)}+2b_2^{(w)}}{b_0^{(w)}+b_1^{(w)}+13b_2^{(w)}}
&b_1^{(w2)}=\frac{-b_0^{(w)}+2b_2^{(w)}+2b_1^{(w)}}{b_0^{(w)}+b_1^{(w)}+13b_2^{(w)}}
&b_2^{(w2)}=\frac{9b_2^{(w)}}{b_0^{(w)}+b_1^{(w)}+13b_2^{(w)}}
\end{array}
%\end{split}
\end{equation*}
\end{thm}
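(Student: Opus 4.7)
The approach is to convert the claimed recursion into a routine linear-algebra computation using the matrix identity $M_{wi} = M_w M_i$, which follows from the definition $M_w = M_{w_1}\cdots M_{w_m}$ and the observation that appending the letter $i$ to $w$ corresponds to right-multiplication by $M_i$.

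Introduce the shorthand $c_j(w) := (1,1,1)\, M_w\, e_j$ for the sum of column $j$ of $M_w$ and $S(w) := (1,1,1)\, M_w\, (1,1,1)^T$ for the sum of all its entries. Then the formula from the preceding theorem reads
\begin{equation*}
b_j^{(w)} = \frac{1}{6} + \frac{c_j(w)}{2 S(w)}, \qquad \text{equivalently} \qquad \frac{c_j(w)}{S(w)} = 2 b_j^{(w)} - \frac{1}{3}.
\end{equation*}
Since $(1,1,1)\, M_{wi} = (c_0(w), c_1(w), c_2(w))\, M_i$, the column sums $c_j(wi)$ can be read off directly from the $j$-th column of $M_i$. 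For $i=0$ this gives $15\, c_0(w0) = 9 c_0(w) + 2 c_1(w) + 2 c_2(w)$, $15\, c_1(w0) = 2 c_1(w) - c_2(w)$, $15\, c_2(w0) = -c_1(w) + 2 c_2(w)$, and hence $15\, S(w0) = 3(3 c_0(w) + c_1(w) + c_2(w))$.

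Substituting these into $b_j^{(w0)} = \tfrac{1}{6} + \tfrac{c_j(w0)}{2 S(w0)}$ and then replacing each $c_j(w)$ by $S(w)(2 b_j^{(w)} - 1/3)$, the common factor $S(w)$ cancels between numerator and denominator. The remaining constants are absorbed using the normalization $b_0^{(w)} + b_1^{(w)} + b_2^{(w)} = 1$: for instance, the denominator proportional to $3 c_0 + c_1 + c_2$ becomes proportional to $12 b_0^{(w)} + 1 = 13 b_0^{(w)} + b_1^{(w)} + b_2^{(w)}$, matching the stated denominator. The numerators collapse similarly to yield the three expressions for $b_0^{(w0)}, b_1^{(w0)}, b_2^{(w0)}$ in the first row of the table. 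The rows $i=1$ and $i=2$ follow identically by cyclic symmetry of $M_0, M_1, M_2$ under permutation of the indices $0,1,2$.

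The main obstacle is purely bookkeeping rather than conceptual: nine separate algebraic simplifications must be carried out, and one must consistently invoke $b_0 + b_1 + b_2 = 1$ to reduce the resulting expressions to the compact quotients displayed in the theorem.
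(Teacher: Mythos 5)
Your proposal is correct and follows essentially the same route as the paper: both express $b_j^{(w)}$ through the column sums of $M_w$, use $M_{w i}=M_w M_i$ (i.e.\ right-multiplication of the row vector of column sums by $M_i$) to pass to the subcell, and then eliminate the common factor and the constants via the normalization $b_0^{(w)}+b_1^{(w)}+b_2^{(w)}=1$. The only difference is cosmetic: the paper introduces $a_j=2\bigl(b_j^{(w)}-\tfrac{1}{6}\bigr)$, which is exactly your quantity $2b_j^{(w)}-\tfrac{1}{3}$.
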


\begin{proof}
We know that, on $F_w SG$,
$$b_j^{(w)} = \frac{1}{6} + \frac{1}{2}\frac{c_j^{(w)}}{c_0^{(w)}+c_1^{(w)}+c_2^{(w)}}$$
where $\left( \begin{array}{ccc} c_0^{(w)}&c_1^{(w)}&c_2^{(w)} \end{array}\right) = \left( \begin{array}{ccc} 1&1&1 \end{array}\right) M_w$.
Put $a_j = 2(b_j^{(w)} - \frac{1}{6})$. We see that $a_j(c_0^{(w)}+c_1^{(w)}+c_2^{(w)})=c_j^{(w)}$. Since
$$\left( \begin{array}{ccc} c_0^{(w)}&c_1^{(w)}&c_2^{(w)} \end{array}\right)M_0 = \frac{1}{15}\left( \begin{array}{ccc} 9c_0^{(w)}+2c_1^{(w)}+2c_2^{(w)}& 2c_1^{(w)}-c_2^{(w)}& 2c_2^{(w)}-c_1^{(w)} \end{array}\right),$$
we have
\begin{align*}
b_0^{(w0)} &= \frac{1}{6}+\frac{1}{2}\frac{9c_0^{(w)}+2c_1^{(w)}+2c_2^{(w)}}{9c_0^{(w)}+3c_1^{(w)}+3c_2^{(w)}}\\ &= \frac{1}{6}+\frac{1}{2}\frac{9a_0+2a_1+2a_2}{9a_0+3a_1+3a_2}.
\end{align*}
Similarly, we have
$$b_1^{(w0)} = \frac{1}{6}+\frac{1}{2}\frac{2a_1-a_2}{9a_0+3a_1+3a_2}$$
and
$$b_2^{(w0)} = \frac{1}{6}+\frac{1}{2}\frac{2a_2-a_1}{9a_0+3a_1+3a_2}.$$
With some computation from the definition of the $a$ values and using the fact that $\sum_j b_j^{(w)}=1$, we can relate $b_j^{(w)}$ and $b_j^{(w0)}$ explicitly as in the statement of the theorem.

Observe that multiplying different $M_j$'s only differs by some permutation of subscript and entries, we have the rest of the results.
\end{proof}
The next theorem presents the second way to calculate $b_j^{(w)}$.
\begin{thm}
\label{th:SameRatio}
The distance of $b_j^{(w)}$ from $1/3$ is proportional to how ``skewed" the Kusuoka measure is on the cell $F_w F_j SG$ relative to $F_w SG$. Specifically,
$$\frac{1}{5} \left( b_j^{(w)} - \frac{1}{3} \right) = \frac{1}{4} \left( \frac{\nu(F_w F_j SG)}{\nu (F_w SG)} - \frac{1}{3} \right).$$
\end{thm}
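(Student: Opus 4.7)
The plan is to reduce both sides of the identity to the same rational expression in the column sums of $M_w$, using formula \eqref{eq:6.1} for the left side and the matrix identity from Theorem \ref{thm:MMatrices} for the right side.

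Let $c_j^{(w)}$ denote the sum of column $j$ of $M_w$, i.e.\ the $j$-th entry of $(1,1,1)M_w$, and write $S_w = c_0^{(w)}+c_1^{(w)}+c_2^{(w)}$ for the sum of all entries of $M_w$. From \eqref{eq:6.1} a short algebraic manipulation gives
$$
\tfrac{1}{5}\left(b_j^{(w)} - \tfrac{1}{3}\right) = \tfrac{1}{5}\left(\tfrac{1}{2}\cdot\tfrac{c_j^{(w)}}{S_w} - \tfrac{1}{6}\right) = \tfrac{3c_j^{(w)} - S_w}{30\,S_w},
$$
which is the target form for the left-hand side.

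For the right-hand side, I would use that $\nu = \nu_0+\nu_1+\nu_2$ and that $\nu_i(SG)=2$ for $i=0,1,2$, so Theorem~\ref{thm:MMatrices} (applied iteratively to the word $w$ and then to $j$) gives
$$
\nu(F_w SG) = (1,1,1)\,M_w\,(2,2,2)^T = 2S_w, \qquad \nu(F_w F_j SG) = 2\,(1,1,1)\,M_w M_j\,(1,1,1)^T.
$$
The factor $(1,1,1)M_w = (c_0^{(w)},c_1^{(w)},c_2^{(w)})$ reduces this to computing $M_j(1,1,1)^T$, the row sums of $M_j$. Directly from the definitions of $M_0,M_1,M_2$ one checks $M_0(1,1,1)^T = \tfrac{1}{15}(9,3,3)^T$ (and the analogous cyclic permutations for $M_1,M_2$). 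Taking $j=0$ for definiteness,
$$
\frac{\nu(F_w F_0 SG)}{\nu(F_w SG)} = \frac{(c_0^{(w)},c_1^{(w)},c_2^{(w)})\cdot\tfrac{1}{15}(9,3,3)^T}{S_w} = \frac{3c_0^{(w)} + c_1^{(w)} + c_2^{(w)}}{5\,S_w} = \frac{2c_0^{(w)} + S_w}{5\,S_w}.
$$
Subtracting $\tfrac{1}{3}$ and multiplying by $\tfrac{1}{4}$ yields exactly $\tfrac{3c_0^{(w)}-S_w}{30\,S_w}$, which matches the left-hand side. Cases $j=1,2$ follow by the same computation with the cyclic permutation of row sums.

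I do not expect a real obstacle: the identity is essentially a repackaging of the two formulas already in the paper (\eqref{eq:6.1} and the matrix recursion for $M_w$), glued together by the observation that the row sums of each $M_j$ are the ``symmetric'' vector $(3/5,1/5,1/5)$ up to cyclic reordering. The only thing worth double-checking is the sign/normalization bookkeeping when converting $\nu(F_w F_j SG)$ into matrix form, since a single misplaced factor of $2$ or $\tfrac{1}{15}$ would shift the final constant away from $\tfrac{1}{4}$ on the right and $\tfrac{1}{5}$ on the left.
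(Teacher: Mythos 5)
Your proof is correct and follows essentially the same route as the paper: both sides are reduced to the column sums of $M_w$ via \eqref{eq:6.1} and the recursion of Theorem \ref{thm:MMatrices}; your use of the row sums $M_j(1,1,1)^T=\frac{1}{15}(9,3,3)^T$ is the same identity the paper exploits by writing $(1,0,0)^T$ as $\frac{5}{4}\bigl((6/5,2/5,2/5)^T-(2/5,2/5,2/5)^T\bigr)$, since $M_j(2,2,2)^T=(\nu_0(F_jSG),\nu_1(F_jSG),\nu_2(F_jSG))^T$. The normalization bookkeeping you flag does check out.
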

\begin{proof}
By \eqref{eq:6.1} we have
$$b_j^{(w)}=\frac{1}{6}+\frac{
\left(\begin{array}{ccc}1&1&1\end{array}\right)z_j}{
\left(\begin{array}{ccc}1&1&1\end{array}\right)z
}.$$
Note that $\left(\begin{array}{ccc}1&1&1\end{array}\right)z=\nu(F_wSG)$. For simplicity take $j=0$. Then

%We first simplify the term
%$$
%\frac{
%\left(
%\begin{array}{ccc}
%1&1&1
%\end{array}
%\right)
%M_w
%\left(
%\begin{array}{c}
%1\\
%0\\
%0
%\end{array}
%\right)
%}{
%\left(
%\begin{array}{ccc}
%1&1&1
%\end{array}
%\right)
%M_w
%\left(
%\begin{array}{c}
%1\\
%1\\
%1
%\end{array}
%\right)
%}
%=
%\frac{
%\left(
%\begin{array}{ccc}
%1&1&1
%\end{array}
%\right)
%M_w
%\left(
%\begin{array}{c}
%2\\
%0\\
%0
%\end{array}
%\right)
%}{
%\left(
%\begin{array}{ccc}
%1&1&1
%\end{array}
%\right)
%M_w
%\left(
%\begin{array}{c}
%2\\
%2\\
%2
%\end{array}
%\right)
%}.
%$$
%
%Note that since $2=\nu_0(SG)=\nu_1(SG)=\nu_2(SG)$, the denominator is just $\nu(F_w SG)$. Next, we see that

\begin{equation*}
\begin{split}
%&\left(
%\begin{array}{ccc}
%1&1&1
%\end{array}
%\right)
%M_w
%\left(
%\begin{array}{c}
%2\\
%0\\
%0
%\end{array}
%\right)\\
\left(\begin{array}{ccc}1&1&1\end{array}\right)z_0
= &\frac{5}{4}
\left(
\begin{array}{ccc}
1&1&1
\end{array}
\right)
M_w
\left(
\begin{array}{c}
4/5\\
0\\
0
\end{array}
\right)\\
=&
\frac{5}{4}
\left(
\left(
\begin{array}{ccc}
1&1&1
\end{array}
\right)
M_w
\left(
\begin{array}{c}
6/5\\
2/5\\
2/5
\end{array}
\right)
-
\left(
\begin{array}{ccc}
1&1&1
\end{array}
\right)
M_w
\left(
\begin{array}{c}
2/5\\
2/5\\
2/5
\end{array}
\right)
\right) \\
=& \frac{5}{4} \left( \nu (F_w F_0 SG) - \frac{1}{5} \nu(F_w SG) \right)
\end{split}
\end{equation*}
where the last equality comes from noting that $6/5=\nu_0(F_0 SG)$ and $2/5=\nu_1(F_0 SG)=\nu_2(F_0 SG)$. More generally we have

$$b_j^{(w)} = \frac{5}{4} \left( \frac{2}{15} + \frac{\nu(F_w F_j SG) - \frac{1}{5}\nu (F_w SG)}{\nu (F_w SG)}\right) = \frac{5}{4} \left( \frac{-1}{15} + \frac{\nu(F_w F_j SG)}{\nu (F_w SG)}\right)$$
which gives
$$\frac{1}{5} \left( b_j^{(w)} - \frac{1}{3} \right) = \frac{1}{4} \left( \frac{\nu(F_w F_j SG)}{\nu (F_w SG)} - \frac{1}{3} \right).$$
\end{proof}
As a consequence of the theorem, the bounds for $(b_j^{(w)}-\frac{1}{3})$ also give bounds for how skewed the Kusuoka measure is in a cell. We now provide some bounds for the coefficients $b_j^{(w)}$.

\begin{thm}
a) The infimum of $\{b_j^{(w)}\}$ over all words $w$ is 0.\\
b) The supremum of $\{b_j^{(w)}\}$ over all words $w$ is $\frac{2}{3}$.
\end{thm}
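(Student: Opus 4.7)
The plan is to convert the question into one about the ratio $\rho_w^{(j)} := \nu(F_w F_j SG)/\nu(F_w SG)$ via Theorem \ref{th:SameRatio}, which rearranges to $b_j^{(w)} = \tfrac{5}{4}\rho_w^{(j)} - \tfrac{1}{12}$. Thus $b_j^{(w)} = 0$ corresponds to $\rho_w^{(j)} = 1/15$ and $b_j^{(w)} = 2/3$ to $\rho_w^{(j)} = 3/5$, so it suffices to show $\inf_w \rho_w^{(j)} = 1/15$ and $\sup_w \rho_w^{(j)} = 3/5$.

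First I would establish the bounds $0 \le b_j^{(w)} \le 2/3$ for all $w$. Using the identity $(1,1,1)M_0 = \tfrac{1}{15}(13,1,1)$ (and the analogous identities for $M_1, M_2$), a short calculation yields
\[
\rho_w^{(j)} \;=\; \frac{12}{15}\cdot\frac{\nu_j(F_w SG)}{\nu(F_w SG)} + \frac{1}{15}.
\]
Positivity of $\nu_j$ immediately gives $\rho_w^{(j)} \ge 1/15$. For the upper bound, I would apply Theorem 3.5 to $\nu_h = \nu_j$ (the harmonic function being $h_j$, so the coefficients are $(a,b,c) = e_j$ with $a+b+c = 1$) to obtain $\tfrac{d\nu_j}{d\nu} \le 2/3$ pointwise; averaging over $F_w SG$ then gives $\nu_j(F_w SG)/\nu(F_w SG) \le 2/3$ and hence $\rho_w^{(j)} \le 3/5$, i.e., $b_j^{(w)} \le 2/3$.

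For tightness of the supremum, I would take $w = j^m$ (WLOG $j = 0$) and pass to the limit via diagonalization of $M_0$. The eigenvalues are $\{3/5,\, 1/5,\, 1/15\}$ with eigenvectors $(4,1,1)^T$, $(0,1,-1)^T$, $(0,1,1)^T$; the key observation is that $(1,1,1)^T = \tfrac{1}{4}(4,1,1)^T + \tfrac{3}{4}(0,1,1)^T$, with no $(0,1,-1)^T$ component. Applying $M_0^m$ and reading off the first coordinate versus the coordinate sum gives $\nu_0(F_0^m SG)/\nu(F_0^m SG) \to 2/3$, so $b_0^{(0^m)} \to 2/3$; the cases $j = 1, 2$ follow by the threefold symmetry of $SG$.

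For tightness of the infimum, I would invoke Lemma \ref{lemma3.4}. Since $h_j$ is symmetric on $SG$ with respect to $q_j$, there is a sequence of cells converging to an appropriate midpoint along which $\nu_j$ decays like $(1/15)^m$, while by Lemma \ref{lemma3.3} the Kusuoka measure $\nu$ on the same cells decays like $(3/5)^m$. Concretely for $j = 0$, along $F_2 F_1^m SG$ converging to $F_2 q_1 = F_1 q_2$, one obtains $\nu_0/\nu = \Theta((1/9)^m) \to 0$, hence $\rho^{(0)}_{2\,1^m} \to 1/15$ and $b_0^{(2\,1^m)} \to 0$. The main obstacle I anticipate is the eigenvalue bookkeeping for $M_0$ in the supremum argument: one must verify that the initial vector $(1,1,1)^T$ has a nonzero $3/5$-component and track the precise limiting ratio of first entry to entry sum in the leading eigenvector $(4,1,1)^T$. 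Once this is settled, the rest of the argument reduces to direct invocations of Theorem 3.5 and Lemmas \ref{lemma3.3}, \ref{lemma3.4}.
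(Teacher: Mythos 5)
Your reduction to the ratio $\rho_w^{(j)}=\nu(F_wF_jSG)/\nu(F_wSG)$ via Theorem \ref{th:SameRatio} is the right move (and is what the paper does), but the identity you build everything on, $\rho_w^{(j)}=\frac{12}{15}\frac{\nu_j(F_wSG)}{\nu(F_wSG)}+\frac{1}{15}$, is false. The self-similar identity $\nu(F_jA)=\frac{1}{15}\nu(A)+\frac{12}{15}\nu_j(A)$ coming from $(1,1,1)M_j=\frac{1}{15}(13,1,1)$ (permuted) applies to \emph{prepending} the letter $j$, i.e.\ to $\nu(F_jF_wSG)$, whereas $b_j^{(w)}$ is governed by the \emph{appended} cell $F_wF_jSG$; since the $M_i$ do not commute, these differ. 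Concretely, in \eqref{eq:6.1} the $b_j^{(w)}$ are controlled by the \emph{column} sums of $M_w$, while $\nu_j(F_wSG)$ is twice a \emph{row} sum. For $w=(1,2)$ one computes $M_1M_2=\frac{1}{225}\left(\begin{smallmatrix}2&2&-1\\-9&18&18\\-4&5&20\end{smallmatrix}\right)$, so $\nu_1(F_wSG)/\nu(F_wSG)=\frac{9}{17}$ and your formula predicts $\rho_w^{(1)}=\frac{125}{255}$, whereas the true value is $\nu(F_1F_2F_1SG)/\nu(F_1F_2SG)=\frac{101}{255}$ (equivalently $b_1^{(w)}=\frac{7}{17}$, not $\frac{9}{17}$). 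Consequently neither bound is ``immediate'': positivity of $\nu_j$ and the pointwise bound $\frac{d\nu_j}{d\nu}\le\frac{2}{3}$ control the row sums, not the column sums you need. The paper instead obtains $0\le b_j^{(w)}\le\frac{2}{3}$ from the nontrivial inequalities $\lim_m(\frac{5}{3})^m\nu(F_wF_0^mSG)\ge 0$ and $\lim_m(\frac{5}{3})^m\nu(F_{w1}F_2^mSG)\ge 0$, which repackage Lemma \ref{lemma3.1} ($14x-y-z\ge 0$) applied inside the cell $F_wSG$.

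The error also sinks your infimum construction. Along $w=2\,1^m$ the column-sum vector $(1,1,1)M_2M_1^m=\frac{1}{15}(1,1,13)M_1^m$ is dominated by the $\frac{3}{5}$-left-eigenvector $(0,1,0)$ of $M_1$, so $c_0^{(w)}/\Sigma^{(w)}\to 0$ and $b_0^{(2\,1^m)}\to\frac{1}{6}$, not $0$; the pointwise vanishing $\frac{d\nu_0}{d\nu}(F_2q_1)=0$ from Lemma \ref{lemma3.4} does not translate into $b_0^{(w)}\to 0$. Attaining $b_0^{(w)}\approx 0$ requires the \emph{local} energy measure of the cell (the measure $\sum_i\nu_{h_i\circ F_w}$, suitably normalized) to degenerate toward that of a single harmonic function skew-symmetric about the $q_0$-corner, which is exactly the equality case of Lemma \ref{lemma3.1}; this is what the paper's non-atomicity/sharpness argument is tracking. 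Your supremum argument with $w=0^m$ does survive (powers of a single matrix make the row/column distinction harmless there, and the limit $\frac{2}{3}$ agrees with the paper's explicit $b_0^{(0^m)}=\frac{2}{3}-\frac{2}{3(9^m+1)}$), but as written both bounds and the infimum sharpness rest on the incorrect identity.
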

\begin{proof}
We first prove a). We know that
\begin{equation*}
\frac{1}{15}\left(
\begin{array}{ccc}
14 &-1&-1
\end{array}
\right)E_w\left(
\begin{array}{c}
2\\
2\\
2
\end{array}
\right)
=
\frac{1}{15}
\lim_m \left(\frac{5}{3}\right)^m \nu(F_w F_0^m)\\
\geq 0
\end{equation*}
and the inequality is sharp over all $w$ because every energy measure is non-atomic.
So
\begin{equation*}
\begin{split}
&\frac{1}{15}
\left(
\begin{array}{ccc}
14 &-1&-1
\end{array}
\right)E_w\left(
\begin{array}{c}
2\\
2\\
2
\end{array}
\right)\\
=&
\left(
\begin{array}{ccc}
1 &1&1
\end{array}
\right)(E_0-\frac{1}{15}I)E_w\left(
\begin{array}{c}
2\\
2\\
2
\end{array}
\right)\\
=&
\left(
\begin{array}{ccc}
1&1&1
\end{array}
\right)E_0 E_w \left( \begin{array}{c}
2\\
2\\
2
\end{array} \right)
-
\frac{1}{15}
\left(
\begin{array}{ccc}
1&1&1
\end{array}
\right)E_w \left( \begin{array}{c}
2\\
2\\
2
\end{array} \right)\\
\geq& 0
\end{split}
\end{equation*}
and hence
\begin{equation*}
\begin{split}
&\left(
\begin{array}{ccc}
1&1&1
\end{array}
\right)E_0 E_w \left( \begin{array}{c}
2\\
2\\
2
\end{array} \right)
-
\frac{1}{5}
\left(
\begin{array}{ccc}
1&1&1
\end{array}
\right)E_w \left( \begin{array}{c}
2\\
2\\
2
\end{array} \right)\\
\geq&
-
\frac{2}{15}
\left(
\begin{array}{ccc}
1&1&1
\end{array}
\right)E_w \left( \begin{array}{c}
2\\
2\\
2
\end{array} \right)
\end{split}
\end{equation*}
which implies
\begin{equation*}
\frac{
\nu(F_w F_0 SG) - \frac{1}{5}\nu (F_w SG)
}{
\nu(F_w SG)
}
\geq -\frac{2}{15}
\end{equation*}
and so finally we get that
\begin{equation*}
b_0^{(w)}=\frac{5}{4}\left(\frac{2}{15} + \frac{\nu(F_w F_0 SG)-\frac{1}{5}\nu(F_w SG)}{\nu(F_w SG)} \right)\geq 0.
\end{equation*}
Since the inequality is sharp,
%prove that inequality is sharp
 zero is the infimum, completing a).\\
To prove b), we see that
\begin{equation*}
\begin{split}
\left(
\begin{array}{ccc}
-2&3&3
\end{array}
\right)E_w \left(
\begin{array}{c}
2\\
2\\
2
\end{array}
\right)&=
\frac{1}{5}
\left(
\begin{array}{ccc}
-1&-1&14
\end{array}
\right)E_1 E_w \left(
\begin{array}{c}
2\\
2\\
2
\end{array}
\right)\\
&=
\frac{1}{5}\lim_m \left( \frac{5}{3} \right)^m \nu (F_{w,1} F_2^m) \geq 0
\end{split}
\end{equation*}
and this inequality is sharp.
So
\begin{equation*}
\begin{split}
5\nu(F_w F_0 SG)-3\nu(F_w SG) &=
\left(
\begin{array}{ccc}
1& 1& 1
\end{array}
\right) (5E_0 - 3I)E_w \left(
\begin{array}{c}
2\\
2\\
2
\end{array}
\right)\\ &= \left(
\begin{array}{ccc}
2 -3 -3
\end{array}
\right) E_w \left(
\begin{array}{c}
2\\
2\\
2
\end{array}
\right)\leq 0.
\end{split}
\end{equation*}
Whence
$$
\frac{5}{2}(\nu(F_w F_0 SG)-\frac{1}{5}\nu(F_w SG)) \leq \nu(F_w SG)
$$
which implies
$$
\frac{5}{2}\frac{\nu(F_w F_0 SG)-\frac{1}{5}\nu(F_w SG)}{\nu(F_w SG)}\leq 1
$$
and so finally we have
$$
b_0^{(w)}=\frac{1}{6}+\frac{5}{4}\frac{\nu(F_w F_0 SG)-\frac{1}{5}\nu(F_w SG)}{\nu(F_w SG)}\leq \frac{2}{3}.
$$
Since the inequality is sharp,
%prove sharpness of this inequality
this is the supremum. We have b).
\end{proof}
The previous results concern the bounds for a single value $b_j^{(w)}$.It is natural to consider $(b_0^{(w)}, b_1^{(w)}, b_2^{(w)})$ as a vector in $\mathbb{R}^3$, lying in the plane $\{(x,y,z):x+y+z=1\}$. Surprisingly, these vectors all lie in the disk centered at $(\frac{1}{3},\frac{1}{3},\frac{1}{3})$ of radius $\frac{1}{\sqrt{6}}$.
\begin{thm}
\label{thm:bValueCircle}
For all words $w$,
$$\sum\left(b_j^{(w)}-\frac{1}{3} \right)^2 < \frac{1}{6}$$ and this inequality is sharp.
\end{thm}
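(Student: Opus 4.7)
The plan is to reduce the squared distance to a statement purely about the Kusuoka measure, then apply the cone identity for energy measures of harmonic functions together with the triangle inequality.

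First I would use Theorem \ref{th:SameRatio} to rewrite everything in terms of the Kusuoka ratios $p_j := \nu(F_w F_j SG)/\nu(F_w SG)$. Since $b_j^{(w)} - \tfrac13 = \tfrac{5}{4}\bigl(p_j - \tfrac13\bigr)$ and $\sum_j p_j = 1$, we get
$$\sum_{j=0}^2 \left(b_j^{(w)} - \tfrac{1}{3}\right)^2 = \tfrac{25}{16}\sum_{j=0}^2\left(p_j - \tfrac{1}{3}\right)^2 = \tfrac{25}{16}\Bigl(\sum_j p_j^2 - \tfrac{1}{3}\Bigr),$$
so the claim is equivalent to showing $\sum_j \nu(F_w F_j SG)^2 < \tfrac{11}{25}\,\nu(F_w SG)^2$ strictly.

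Next I would exploit the cone identity from the proof of Theorem 4.1, extended self-similarly. For each $k\in\{0,1,2\}$ set
$$v_k := \bigl(\nu_k(F_w F_0 SG),\, \nu_k(F_w F_1 SG),\, \nu_k(F_w F_2 SG)\bigr).$$
Applying the identity to the harmonic function $h_k\circ F_w$ gives $\|v_k\|^2 = \tfrac{11}{25}(\mathbf{1}^T v_k)^2 = \tfrac{11}{25}\,\nu_k(F_w SG)^2$. Since $\nu=\nu_0+\nu_1+\nu_2$, the vector $(\nu(F_w F_i SG))_i$ equals $v_0+v_1+v_2$, and the Euclidean triangle inequality gives
$$\Bigl(\sum_i \nu(F_w F_i SG)^2\Bigr)^{1/2} \leq \sum_k \|v_k\| = \sqrt{\tfrac{11}{25}}\sum_k \nu_k(F_w SG) = \sqrt{\tfrac{11}{25}}\,\nu(F_w SG),$$
which yields the non-strict version on squaring.

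For strictness, equality in the triangle inequality would force $v_0,v_1,v_2$ to be mutually proportional. Writing $v_k = E_w \tilde v_k$ with $\tilde v_k = (\nu_k(F_0 SG), \nu_k(F_1 SG), \nu_k(F_2 SG))$, the three vectors $\tilde v_0 = (6/5,2/5,2/5)$, $\tilde v_1 = (2/5,6/5,2/5)$, $\tilde v_2 = (2/5,2/5,6/5)$ are linearly independent, and from the diagonalization in \eqref{eq:DiagE} the eigenvalues of $E_w$ (a product of $E_i$'s with nonzero eigenvalues $1/15, 3/5, 1/5$) are all nonzero, so $E_w$ is invertible and $v_0,v_1,v_2$ remain linearly independent. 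Hence strict inequality holds. For sharpness, I would take $w = 0^m$: by \eqref{eq:LimitE}, $(5/3)^m E_0^m$ converges to a rank-one matrix whose range is spanned by $(3,1,1)^T$, so after normalization each $v_k$ becomes asymptotically parallel to $(3,1,1)$ as $m\to\infty$. In the limit $(p_0,p_1,p_2)\to(3/5,1/5,1/5)$, which gives $\sum_j(b_j^{(w)} - \tfrac13)^2 \to (1/3)^2 + 2(1/6)^2 = 1/6$ without attaining it.

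The main obstacle is the strictness argument: checking that the three distribution vectors $v_0,v_1,v_2$ never become exactly proportional along any finite word requires the invertibility of $E_w$ and the initial independence of the $\tilde v_k$; once this is set up, the rest is a linear algebra bookkeeping. Everything else is a routine translation via Theorem \ref{th:SameRatio} and the triangle inequality.
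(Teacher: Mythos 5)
Your proof is correct, but it takes a genuinely different route from the paper's. The paper works directly with the column sums $c_j^{(w)}$ of $M_w$ via \eqref{eq:6.1}, reduces the claim to the positivity of $c_0^{(w)}c_1^{(w)}+c_1^{(w)}c_2^{(w)}+c_2^{(w)}c_0^{(w)}$, and proves that by induction on $|w|$, computing explicitly that this quadratic form is multiplied by exactly $\frac{9}{225}$ under each $M_i$. You instead pass through Theorem \ref{th:SameRatio} to restate the claim as $\sum_j \nu(F_wF_jSG)^2 < \frac{11}{25}\nu(F_wSG)^2$, then invoke the cone identity $x^2+y^2+z^2=\frac{11}{25}(x+y+z)^2$ from the proof of the positivity theorem in Section 4 (applied to each $h_k\circ F_w$, which is legitimate since the identity is homogeneous and $\nu_{h_k}(F_wF_iSG)=(5/3)^{|w|}\nu_{h_k\circ F_w}(F_iSG)$), and get the non-strict bound from the Euclidean triangle inequality applied to $v_0+v_1+v_2$. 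Your strictness argument is sound: equality would force the $v_k$ to be nonnegatively proportional to a common direction, which is impossible because the $\tilde v_k$ are linearly independent and every $E_i$ has nonzero eigenvalues $\frac{1}{15},\frac{3}{5},\frac{1}{5}$, so $E_w$ is invertible. Both proofs establish sharpness with $w=0^m$; your limit $(p_0,p_1,p_2)\to(\frac35,\frac15,\frac15)$ agrees with the paper's explicit formula $b_0^{(0^m)}=\frac23-\frac{2}{3(3^{2m}+1)}$. What your approach buys is conceptual economy: it exposes the bound as nothing more than the statement that the Kusuoka measure's subcell distribution is a sum of three non-parallel points on the cone of single-harmonic-function energy measures, tying Theorem \ref{thm:bValueCircle} directly to Section 4. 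What the paper's induction buys is quantitative information: the invariant $c_0^{(w)}c_1^{(w)}+c_1^{(w)}c_2^{(w)}+c_2^{(w)}c_0^{(w)}$ decays by the exact factor $\frac{1}{25}$ per letter, which measures how fast the vectors $b^{(w)}$ approach the boundary circle (relevant to Conjecture \ref{conj2}).
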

\begin{proof}
Let $\left(
\begin{array}{ccc}
1& 1& 1
\end{array}
 \right) M_w =
 \left(
\begin{array}{ccc}
c_0^{(w)}& c_1^{(w)}& c_2^{(w)}
\end{array}
 \right).$ Then we have from \eqref{eq:6.1} that
 $$
 b_j^{(w)} - \frac{1}{3}= \frac{1}{2} \frac{c_j^{(w)}}{c_0^{(w)}+c_1^{(w)}+c_2^{(w)}} - \frac{1}{6}
 $$
and so
\begin{equation*}
\begin{split}
\sum \left(b_j^{(w)}-\frac{1}{3}\right)^2&= \sum \left(\frac{1}{2} \frac{c_j^{(w)}}{c_0^{(w)}+c_1^{(w)}+c_2^{(w)}} - \frac{1}{6} \right)^2\\
&= \frac{1}{12}+ \frac{1}{4}\sum \frac{(c_j^{(w)})^2}{(c_0^{(w)}+c_1^{(w)}+c_2^{(w)})^2}-\frac{1}{6}\sum \frac{c_j^{(w)}}{c_0^{(w)}+c_1^{(w)}+c_2^{(w)}}\\
&= \frac{1}{4}\sum \frac{(c_j^{(w)})^2}{(c_0^{(w)}+c_1^{(w)}+c_2^{(w)})^2}- \frac{1}{12}.
\end{split}
\end{equation*}

So we need to show that
$$
\frac{1}{4}\sum \frac{(c_j^{(w)})^2}{(c_0^{(w)}+c_1^{(w)}+c_2^{(w)})^2} \leq \frac{1}{4}
$$
for which it suffices to show that
$$
c_0^{(w)} c_1^{(w)}+c_1^{(w)} c_2^{(w)}+c_0^{(w)} c_2^{(w)}>0
$$
which we will prove by induction on the length of $w$. When $|w|=1$, $c_0^{(w)} c_1^{(w)}+c_1^{(w)} c_2^{(w)}+c_0^{(w)} c_2^{(w)}= \frac{13}{15} \frac{1}{15}+\frac{13}{15} \frac{1}{15}+\frac{1}{15} \frac{1}{15}>0$.

Now suppose it is true for $w$ with $|w|=m$. We see that

\begin{equation*}
\begin{split}
 \left(
\begin{array}{ccc}
c_0^{(w0)}& c_1^{(w0)}& c_2^{(w0)}
\end{array}
 \right)&=
  \left(
\begin{array}{ccc}
c_0^{(w)}& c_1^{(w)}& c_2^{(w)}
\end{array}
 \right)M_0\\
 &= \frac{1}{15} \left(
 \begin{array}{ccc}
 9c_0^{(w)}+2c_1^{(w)}+2c_2^{(w)}&2c_1^{(w)}-c_2^{(w)} &2c_2^{(w)}-c_1^{(w)}
 \end{array}
 \right)
\end{split}
\end{equation*}

and so

\begin{equation*}
\begin{split}
15^2(c_0^{(w0)} c_1^{(w0)}+c_1^{(w0)} c_2^{(w0)}+c_0^{(w0)} c_2^{(w0)})
&= 9 c_0^{(w)}(c_1^{(w)}+c_2^{(w)})+ 2(c_1^{(w)})^2+2(c_2^{(w)})^2 \\
&+4c_1^{(w)} c_2^{(w)} +5c_1^{(w)} c_2^{(w)} -2(c_1^{(w)})^2-2(c_2^{(w)})^2\\
&=9(c_0^{(w)} c_1^{(w)}+c_1^{(w)} c_2^{(w)}+c_0^{(w)} c_2^{(w)})\\
&> 0.
\end{split}
\end{equation*}

As for the sharpness of the inequality, consider a word $w$ of length $m$ consisting of only the letter $0$. We have that
$$b_0^{(w)}=\frac{2}{3}-\frac{2}{3(3^{2m}+1)}, b_1^{(w)}=  b_2^{(w)}=\frac{1}{3(3^{2m}+1)}+\frac{1}{6}$$
and so as $m \to \infty$ we have $\lim_m \sum(b_j^{(w)}-\frac{1}{3})^2 = \frac{1}{6}$. In fact this is true for any sequence of words approaching an infinite word.
\end{proof}
\begin{rem}
\label{Atransformation}
In Theorem \ref{th:bRelation}, we saw that from $F_w SG$ to $F_{w0} SG$, the properties of $b_j^{(w)}$ were translated to the properties of the rational transformation $A_0$
$$
\left(\begin{array}{c}a_0\\ a_1\\ a_2 \end{array} \right) \mapsto
\left(\begin{array}{c}\frac{9a_0+2a_1+2a_2}{9a_0+3a_1+3a_2}\\ \frac{2a_1-a_2}{9a_0+3a_1+3a_2}\\ \frac{2a_2-a_1}{9a_0+3a_1+3a_2}\end{array}\right)$$
and we have $A_1$ and $A_2$ for the maps from $F_w SG$ to $F_{w1}SG$ and $F_{w2}SG$. Thus it is worthwhile to look at the properties of $a_j$. First, it is easy to see that $\sum{a_j} = 1$. Also, the sum of squares of $a_j$ is strictly less than $1$. To see this, using $a_j = 2\left(b_j^{(w)}-\frac{1}{6}\right)$, we have
$$\sum{\left(\frac{1}{2}a_j-\frac{1}{6}\right)^2}<\frac{1}{6}$$
so
$$\sum{\left(a_j-\frac{1}{3} \right)^2}<\frac{2}{3}.$$
Since $\sum{a_j}=1$, expanding the above gives
$$\sum{a_j^2}<1.$$
Finally, in terms of the ratios of Kusuoka measure, we have
$$\frac{\nu(F_{wj}SG)}{\nu(F_wSG)}=\frac{2}{5}\left(a_j+\frac{1}{2}\right).$$
\end{rem}

\section{Energy Distribution}
Let $b^{(w)}=(b_0^{(w)}, b_1^{(w)}, b_2^{(w)})$. We have seen in Theorem \ref{th:SameRatio} how the vector $b^{(w)}$ relates to the splitting of
Kusuoka measure in the cell $F_w SG$ when it subdivides into three subcells at the next level. Theorem \ref{th:bRelation} gives a recursive algorithm to compute the $b^{(w)}$ vectors. In this section we investigate the distribution of the collection of all $b^{(w)}$ vectors as $w$ varies over all words of fixed length $m$. We may write
$b^{w0}=B_0(b^{w})$, $b^{(w1)}=B_1(b^{(w)})$ and $b^{(w2)}=B_2(b^{(w)})$
where the $B_j$ are the rational maps given in Theorem \ref{th:bRelation}. We
may regard $\{B_0, B_1, B_2\}$ as an iterated function system acting on the open disk described in Theorem \ref{thm:bValueCircle}.\\

\begin{figure}
  \centering
    \includegraphics[height=0.77\textwidth]{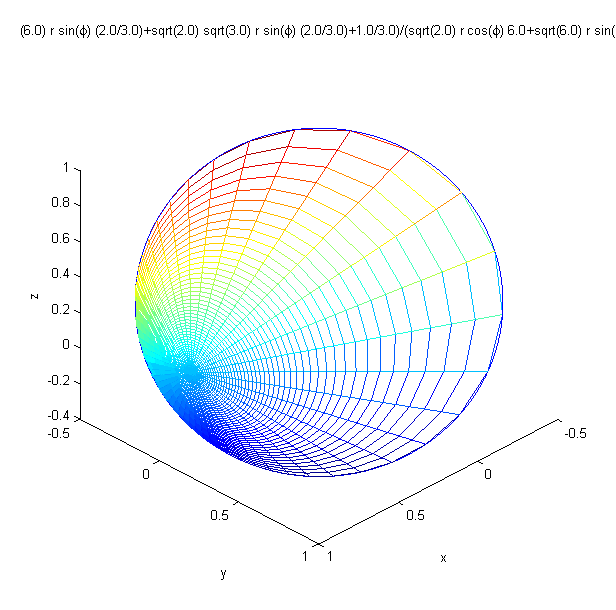}
  \caption{The map $B_0$}
  \label{fig:diskWithoutGrid}
\end{figure}

\begin{figure}
  \centering
    \includegraphics[height=0.58\textwidth]{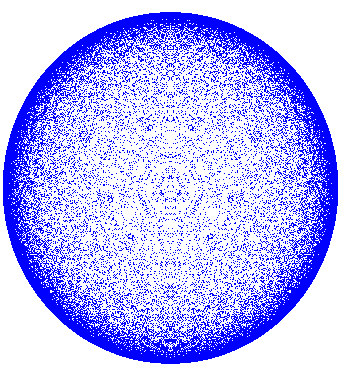}
  \caption{Points of $\{b^{(w)}:|w|=14\}$}
    \label{fig:blvl14}
\end{figure}

In Figure \ref{fig:diskWithoutGrid} we illustrate the geometric structure of $B_0$ by showing the image under $B_0$ of a uniform polar coordinate grid on the disk. $B_1$ and $B_2$ are similar, but are rotated through angles $\frac{2\pi}{3}$ and $\frac{-2\pi}{3}$.\\

To obtain $\{b^{(w)}:|w|=m\}$ we apply all $3^m$ iterates of $\{B_0, B_1, B_2\}$ to the initial vector $(\frac{1}{3}, \frac{1}{3}, \frac{1}{3})$. In Figure \ref{fig:blvl14} we show the result for $m=12$. To better understand this distribution of points in the disk we examine its angular and radial projections.\\

Introduce polar coordinates $(r,\theta)$ for the disk with $r<\frac{1}{\sqrt{6}}$(the origin corresponds to the vector $b=(\frac{1}{3}, \frac{1}{3}, \frac{1}{3})$). The angular distribution at level $m$ is $$P_m(A)=3^{-m}\#\{\text{points with }\theta\in A\}.$$
By symmetry it suffices to understand the distribution for $A\subseteq[0,\frac{2\pi}{3}]$. In Figure \ref{fig:AngDist} we show histograms (100 slices) of $P_m$ on $[0,\frac{2\pi}{3}]$ for $m=11$ and $m=13$; the values are normalized so that the average over all the values is $1$.

\begin{conj}
\label{conj1}
Let $P_m$ be defined as above. The limit of $P_m$ as $m\to \infty$ is an absolutely continuous $(B_0, B_1, B_2)$-invariant measure.
\end{conj}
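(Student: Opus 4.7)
\smallskip

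\noindent\textbf{Proof proposal.} The plan is to lift the statement to the full disk $\overline{D}$ of radius $1/\sqrt{6}$ in the plane $\{(x,y,z):x+y+z=1\}$, of which the angular distribution $P_m$ is the push-forward under the polar-angle map. Set $Q_m=3^{-m}\sum_{|w|=m}\delta_{b^{(w)}}$ and define the Markov operator on probability measures
$$T\mu=\tfrac{1}{3}\sum_{j=0}^2(B_j)_*\mu.$$
Theorem \ref{thm:bValueCircle} gives $B_j(\overline{D})\subseteq\overline{D}$, while Theorem \ref{th:bRelation} together with the equal branching probabilities shows $Q_{m+1}=TQ_m$ with $Q_0=\delta_{(1/3,1/3,1/3)}$. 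Since the denominators $13b_i+b_j+b_k=12b_i+1$ in Theorem \ref{th:bRelation} never vanish on $\overline{D}$, each $B_j$ is continuous there, so $T$ is Feller. By weak-$\ast$ compactness of $P(\overline{D})$, the Cesàro averages of $\{T^m Q_0\}$ have cluster points that are $T$-invariant; pushing forward under the polar-angle map produces $T$-invariant cluster points of $\{P_m\}$, which handles existence.

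To promote cluster-point existence to genuine convergence, I would show that $\{B_0,B_1,B_2\}$ contracts on average in a suitable metric. The Euclidean metric is ruled out because each $B_j$ has a boundary fixed point ($(2/3,1/6,1/6)$, $(1/6,2/3,1/6)$, or $(1/6,1/6,2/3)$ respectively, computed by direct substitution into the formulas of Theorem \ref{th:bRelation}). The natural candidate is a hyperbolic-type metric on the open disk under which the $B_j$'s behave like Möbius maps with an attracting boundary fixed point; if one can verify the mean-contraction condition $\mathbb{E}_\omega\log\mathrm{Lip}(B_\omega)<0$ for $\omega$ uniform on $\{0,1,2\}$, then a Barnsley--Elton-type theorem yields weak-$\ast$ convergence of $T^m Q_0$ to a unique $T$-invariant $\mu$, and hence convergence of $P_m$ to the angular projection of $\mu$.

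The main obstacle, and where the real difficulty lies, is absolute continuity of $\mu$. I would attack it by computing the Jacobians $DB_j$ from the rational formulas in Theorem \ref{th:bRelation} and studying the transfer operator
$$\mathcal{L}f=\sum_{j=0}^2|\det DB_j^{-1}|\,f\circ B_j^{-1}$$
on the appropriate branches: absolute continuity of $\mu$ is equivalent to $\mathcal{L}$ admitting a fixed point in $L^1(\overline{D})$, which could be obtained from a Lasota--Yorke-type inequality together with quasi-compactness of $\mathcal{L}$ on a space of functions of bounded variation. A complementary Fourier-analytic route is to iterate the self-similarity $\widehat{\mu}(\xi)=\tfrac{1}{3}\sum_j\widehat{(B_j)_*\mu}(\xi)$ and exploit the threefold rotational symmetry of the IFS to obtain $|\widehat\mu(\xi)|$ decay sufficient for $\mu\in L^2$. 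Either route demands delicate estimates of $B_j$ near its boundary fixed point — precisely the region where Figures \ref{fig:diskWithoutGrid} and \ref{fig:blvl14} suggest the measure concentrates most irregularly — and this is the step I expect to dominate the difficulty of the problem.
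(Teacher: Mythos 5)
The first thing to note is that the statement you are addressing is Conjecture \ref{conj1}: the paper does not prove it, and offers only numerical evidence (Figures \ref{fig:AngDist} and \ref{1sixth800slicesNormalized}) together with a reduction. Namely, using Theorem \ref{thm:BCircleAngle} it computes the boundary restrictions $g_j$ of the $B_j$ and derives the functional equation
$$f(t)=\frac{1}{3}\sum_{j=0}^2\frac{f(g_j^{-1}(t))}{g_j'(g_j^{-1}(t))},\qquad \frac{1}{g_j'(g_j^{-1}(t))}=\frac{3}{5-4\cos\left(t-\tfrac{2\pi j}{3}\right)},$$
that the conjectured density must satisfy. Your proposal is likewise not a proof: every decisive step (the mean-contraction estimate, the Lasota--Yorke inequality, the Fourier decay) is deferred with ``I would show'' or ``if one can verify,'' so at best you have reproduced the setup of an open problem. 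Minor points: the invariance relation for a density under an IFS with equal weights should carry the factor $\tfrac13$ that your transfer operator omits, and by Theorem \ref{thm:BCircleAngle}(iii) each $B_j$ has \emph{two} boundary fixed points (for $B_0$ they are $(2/3,1/6,1/6)$ and $(0,1/2,1/2)$), one attracting and one repelling, which already rules out a global contraction argument of the kind you sketch.

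More seriously, the absolute-continuity part of your plan targets the wrong measure. You propose to show that the invariant measure of the planar system $\{B_0,B_1,B_2\}$ on the disk $\overline{U}$ is absolutely continuous with respect to two-dimensional Lebesgue measure (via $|\det DB_j^{-1}|$, or via $L^2$ Fourier decay). But Theorem \ref{thm:BCircleAngle} shows each $B_j$ maps the boundary circle onto itself, Theorem \ref{thm:GeoOfMaps} shows points are pushed toward that circle, and Conjecture \ref{conj2} asserts that the radial marginal converges to the delta measure at $r=\tfrac{1}{\sqrt{6}}$; the limiting planar measure is therefore expected to be supported on the circle and hence \emph{singular} with respect to area, so the 2D transfer-operator and Fourier routes cannot succeed as stated. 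What Conjecture \ref{conj1} actually asks for is absolute continuity of the angular marginal with respect to arc length, i.e.\ an $L^1$ fixed point of the one-dimensional operator displayed above --- exactly where the paper stops. A correct attack would work directly with the circle system $\{g_0,g_1,g_2\}$; there one can check, for instance, that $\prod_{j=0}^2 g_j'(\theta)=27/(65+16\cos 3\theta)\leq 27/49<1$ for every $\theta$, so the circle IFS contracts on average and has a unique invariant probability measure, and the genuinely open question --- untouched by your proposal --- is whether that unique measure has a density.
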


Similarly, the radial distribution at level $m$ is $$Q_m(A)=3^{-m}\#\{\text{points with }r\in A\}.$$
In Figure \ref{fig:RadDist} we show histograms (300 bins) of $Q_m$ on $[0,\frac{1}{\sqrt{6}}]$, for $m=10$ and $m=14$; the values are the ratios over the total number of $b$ vectors.
\begin{conj}
\label{conj2}
Let $Q_m$ be defined as above. The limit
of $Q_m$ as $m\to \infty$ is the delta measure on the boundary $r=\frac{1}{\sqrt{6}}$.
\end{conj}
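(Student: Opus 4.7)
The plan is to introduce a single Lyapunov functional that measures distance to the boundary circle, show it decays multiplicatively along the IFS, and conclude by a concentration estimate. I work with the coordinates $a_j=2(b_j-\tfrac{1}{6})$ on the simplex $\{\sum a_j=1\}$, as in the remark after Theorem 6.5, and set
$$
\phi(a)\;:=\;1-\sum_{j=0}^2 a_j^2 \;=\; 2\sum_{i<j} a_i a_j .
$$
A short calculation shows $\phi(a^{(w)})=\tfrac{2}{3}-4(r^{(w)})^2$, so proving $Q_m\to \delta_{1/\sqrt 6}$ is equivalent to showing $\phi(a^{(w)})\to 0$ in probability under the uniform $3^{-m}$-measure on length-$m$ words.

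The central algebraic step is the multiplicative identity
$$
\phi\bigl(A_i(a)\bigr)\;=\;\frac{\phi(a)}{(1+2a_i)^2},\qquad i=0,1,2,
$$
which I would verify by direct expansion of the rational maps $A_i$ recorded after Theorem 6.5; the simplex constraint $\sum a_j=1$ is essential for the cancellation that produces such a clean form. Iterating gives
$$
\log\phi\bigl(a^{(w_1\cdots w_m)}\bigr)\;=\;\log\tfrac{2}{3}\;-\;2\sum_{k=1}^{m}\log\!\bigl(1+2\,a_{w_k}^{(w_1\cdots w_{k-1})}\bigr),
$$
starting from $\phi(a^{(\emptyset)})=2/3$.

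A symmetric-function expansion yields $\prod_{i=0}^2(1+2a_i)=3+2\phi(a)+8\,a_0a_1a_2$, and a Lagrange-multiplier analysis on the admissible disk $\{\sum a_j=1,\ \sum a_j^2\le 1\}$ shows $a_0a_1a_2\ge -4/27$, with equality at the three permutations of $(-1/3,2/3,2/3)$; since $\phi\ge 0$ on the disk, this gives the uniform lower bound $\prod_i(1+2a_i)\ge 49/27>1$. Taking $w_1,w_2,\ldots$ i.i.d.\ uniform in $\{0,1,2\}$ and conditioning on the past,
$$
\mathbb{E}\!\left[\log(1+2\,a_{w_k}^{(\mathrm{past})})\,\Big|\,\mathcal F_{k-1}\right]\;=\;\tfrac{1}{3}\log\!\prod_{i=0}^2(1+2\,a_i^{(\mathrm{past})})\;\ge\;\tfrac{1}{3}\log\tfrac{49}{27}\;>\;0 .
$$

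Since each increment $\log(1+2a_{w_k})$ is uniformly bounded by $\log 3$, an Azuma--Hoeffding bound on the centered martingale gives, for some $c>0$,
$$
P\!\left(\sum_{k=1}^m\log(1+2a_{w_k}^{(\mathrm{past})}) \;<\; \tfrac{m}{6}\log\tfrac{49}{27}\right)\;\le\;2e^{-cm},
$$
and inserting this into the iterated identity for $\phi$ gives $\phi(a^{(w)})\to 0$ at an exponential rate with probability tending to $1$, which is precisely Conjecture 7.2. The main obstacle is verifying the identity $\phi(A_i a)=\phi(a)/(1+2a_i)^2$: the final form is remarkably clean, but the cancellation is delicate and hinges on combining $\sum a_j = 1$ with $\phi(a)=2(a_0a_1+a_0a_2+a_1a_2)$. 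Everything else is a standard law-of-large-numbers estimate once that identity is in hand.
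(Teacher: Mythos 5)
There is no proof in the paper to compare against: the statement is posed as Conjecture \ref{conj2} and supported only by the histograms in Figure 7.4, so your argument is not an alternative route but, as far as I can tell, an actual resolution of the conjecture. I checked the step you flagged as the main obstacle and it goes through: on the simplex $\sum a_j=1$ the denominator of $A_0$ is $9a_0+3a_1+3a_2=3(1+2a_0)$ and the numerator triple is $(7a_0+2,\,2a_1-a_2,\,2a_2-a_1)$, whence
\begin{equation*}
9(1+2a_0)^2-\bigl[(7a_0+2)^2+(2a_1-a_2)^2+(2a_2-a_1)^2\bigr]
=9\Bigl(1-\sum a_j^2\Bigr)-4(a_0-1)^2+4(a_1+a_2)^2,
\end{equation*}
and the last two terms cancel because $a_1+a_2=1-a_0$; this is exactly $\phi(A_0a)=\phi(a)/(1+2a_0)^2$, and the cases $i=1,2$ follow by permuting indices. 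The identity $\phi=\frac{2}{3}-4r^2$ is also correct (it is the $a$-coordinate version of Theorem \ref{thm:bValueCircle}), so $\phi\to 0$ in probability is indeed equivalent to $Q_m\to\delta_{1/\sqrt6}$. The remaining ingredients check out as well: on the closed disk each $a_i\in[-\frac13,1]$, so $1+2a_i\in[\frac13,3]$, the logarithms are defined and the martingale increments are bounded by $2\log 3$; the minimum of $a_0a_1a_2$ over the closed disk is $-\frac{4}{27}$ (the only interior critical value is $\frac1{27}$, and on the boundary circle the product equals $\frac{2}{27}(2\cos\theta+1)^2(\cos\theta-1)\ge-\frac{4}{27}$), giving $\prod_i(1+2a_i)\ge\frac{49}{27}$ since $\phi\ge0$ there. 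Azuma--Hoeffding then yields that all but an exponentially small fraction of words of length $m$ satisfy $\phi(a^{(w)})\le\frac{2}{3}(27/49)^{m/3}$, which proves the conjecture with an explicit exponential rate. Two small points worth making explicit when you write this up: state that the iterates stay in the open disk (Theorem \ref{thm:BCircleAngle} guarantees this, so $\log\phi$ never hits $-\infty$), and note that the positivity $\prod_i(1+2a_i)>1$ is uniform over the disk, which is what makes the drift bound uniform in the past.
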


We can say more about the mappings. Recall that we have defined
$$B_0(x,y,z)=\left( \frac{9x}{13x+y+z}, \frac{2x+2y-z}{13x+y+z}, \frac{2x-y+2z}{13x+y+z}\right);$$
in Theorem \ref{th:bRelation}. For each word $w$, we have $B_0(b^{(w)})=b^{(w0)}$.
\begin{figure}
    \begin{minipage}[t]{0.5\linewidth}
        \centering
        \includegraphics[width=2.41in]{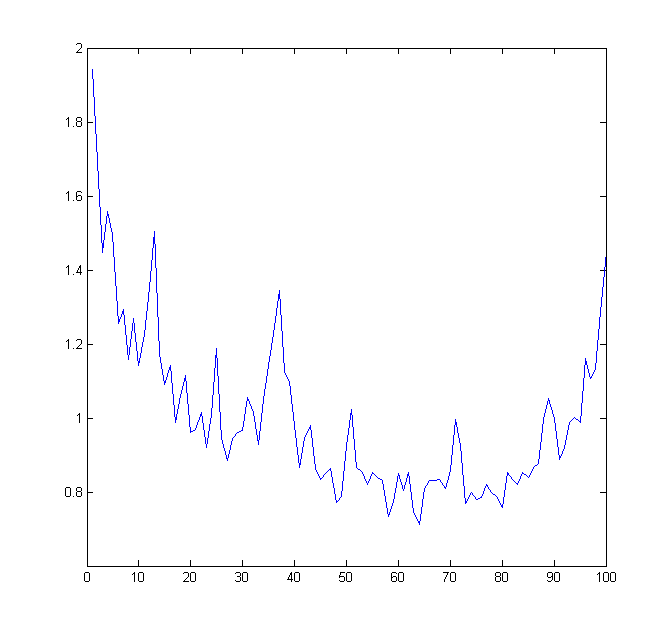}
        \subcaption{Level 11}
        \label{lvl11slice100}
    \end{minipage}%
    \begin{minipage}[t]{0.5\linewidth}
        \centering
        \includegraphics[width=2.45in]{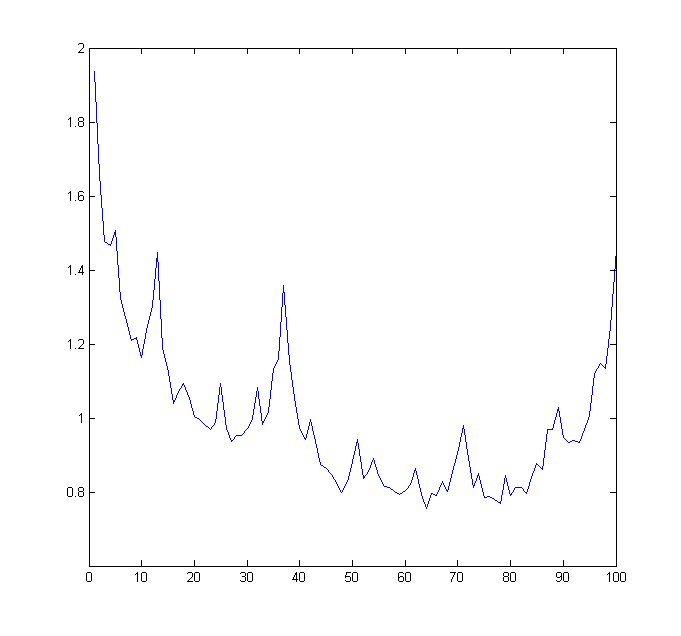}
        \subcaption{Level 13}
        \label{lvl13slice100}
    \end{minipage}
    \caption{Angular Distribution}\label{fig:AngDist}
\end{figure}

\begin{figure}
    \begin{minipage}[t]{0.5\linewidth}
        \centering
        \includegraphics[width = 2.2 in]{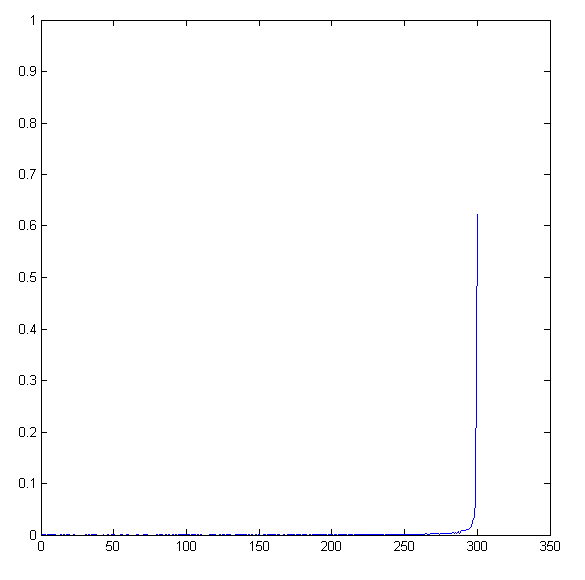}
        \subcaption{Level 10}
        \label{lvl10rad}
    \end{minipage}%
    \begin{minipage}[t]{0.5\linewidth}
        \centering
        \includegraphics[width = 2.45 in]{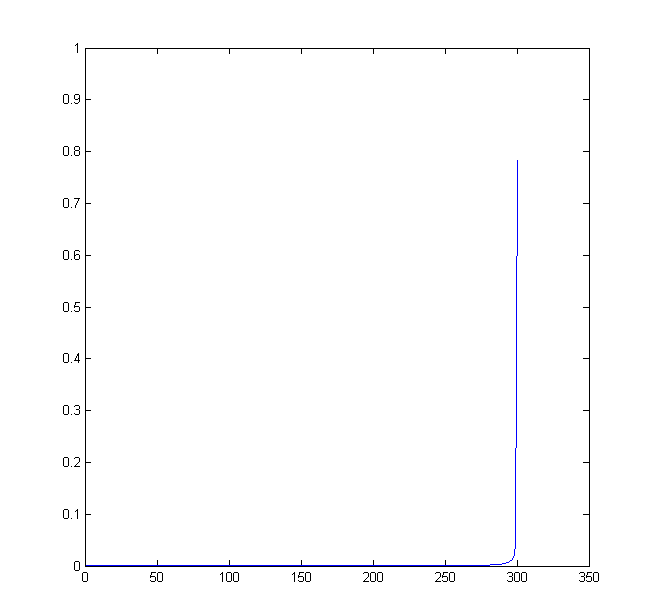}
        \subcaption{Level 14}
        \label{lvl14rad}
    \end{minipage}
    \caption{Radial Distribution}\label{fig:RadDist}
\end{figure}
We first introduce the ``polar coordinates" on the disk centered at $(\frac{1}{3}, \frac{1}{3}, \frac{1}{3})$ on the plane $\{x+y+z=1\}$. We know that $(\cos(\theta+\frac{\pi}{6}), \sin(\theta+\frac{\pi}{6}), 0)$ is a parametrization of the unit circle lying on the $x-y$ plane. Put
$$A = \left(
\begin{array}{ccc}
\frac{1}{\sqrt{2}}&\frac{1}{\sqrt{6}}&\frac{1}{\sqrt{3}}\\[1pt]
\frac{-1}{\sqrt{2}}&\frac{1}{\sqrt{6}}&\frac{1}{\sqrt{3}}\\[1pt]
0&\frac{-2}{\sqrt{6}}&\frac{1}{\sqrt{3}}
\end{array}
\right)$$
which maps the unit circle lying on the $x-y$ plane to the unit circle lying on the plane $\{x+y+z=0\}$. It follows that a parametrization of the circle of radius $r$ centered at $(\frac{1}{3}, \frac{1}{3}, \frac{1}{3})$ on the plane $\{x+y+z=1\}$ is
\begin{equation}
\label{eq:parametrizaton}
\left(
\begin{array}{c}
\frac{1}{3}\\[1pt]
\frac{1}{3}\\[1pt]
\frac{1}{3}
\end{array}
\right)+ A
\left(\begin{array}{c}
r\cos(\theta+\frac{\pi}{6})\\
r\sin(\theta+\frac{\pi}{6})\\
0
\end{array}
\right)=
\left(
\begin{array}{c}
\frac{1}{3}\\[1pt]
\frac{1}{3}\\[1pt]
\frac{1}{3}
\end{array}
\right)+r \left(
\begin{array}{c}
\frac{2}{\sqrt{6}}\cos\theta\\[3pt] \frac{1}{\sqrt{2}}\sin\theta-\frac{1}{\sqrt{6}}\cos\theta\\[3pt] \frac{-1}{\sqrt{2}}\sin\theta-\frac{1}{\sqrt{6}}\cos\theta \end{array} \right). \end{equation}
With this parametrization, the image of radius $r$ under $B_0$ is
\begin{equation}
\label{eq:RthetaMap}
\left(
\begin{array}{c} \frac{1}{3}\\[1pt] \frac{1}{3}\\[1pt] \frac{1}{3}
\end{array}
\right)
+
\frac{1}{6}\left(
\begin{array}{c}
\frac{10\sqrt{6}r\cos\theta+8}{4\sqrt{6}r\cos\theta+5}\\[3pt] \frac{9\sqrt{2}r\sin\theta-5\sqrt{6}r\cos\theta-4}{4\sqrt{6}r\cos\theta+5}\\[3pt] \frac{-9\sqrt{2}r\sin\theta-5\sqrt{6}r\cos\theta-4}{4\sqrt{6}r\cos\theta+5} \end{array} \right).\end{equation} Thus if we take $r=\frac{1}{\sqrt{6}}$, we see that the image of the boundary circle is
\begin{equation}
\label{eq:MapedBoundary}
\left(\begin{array}{c}
\frac{1}{3}\\[1pt]
\frac{1}{3}\\[1pt]
\frac{1}{3}
\end{array}
\right) +
\frac{1}{6}\left( \begin{array}{c}
\frac{10\cos\theta+8}{4\cos\theta+5}\\[3pt]
\frac{3\sqrt{3}\sin\theta-5\cos\theta-4}{4\cos\theta+5}\\[3pt]
\frac{-3\sqrt{3}\sin\theta-5\cos\theta-4}{4\cos\theta+5}
\end{array}
\right).
\end{equation}

Each $B_j$ maps the disk onto itself injectively. We first show that the mappings map the disk into itself. By \eqref{eq:parametrizaton} and \eqref{eq:RthetaMap}, we need to see, by considering $B_0$, whether to each pair $(r,\theta)$, there exists a unique solution $(\gamma, \alpha)$ satisfying
$$
\left(\begin{array}{c}
\frac{10\sqrt{6}r\cos\theta+8}{4\sqrt{6}r\cos\theta+5}\\[3pt]
\frac{9\sqrt{2}r\sin\theta-5\sqrt{6}r\cos\theta-4}{4\sqrt{6}r\cos\theta+5}\\[3pt]
\frac{-9\sqrt{2}r\sin\theta-5\sqrt{6}r\cos\theta-4}{4\sqrt{6}r\cos\theta+5}
\end{array}\right)=
\left(\begin{array}{c}
2\gamma\sqrt{6}\cos\alpha\\[3pt]
3\gamma\sqrt{2}\sin\alpha-\gamma\sqrt{6}\cos\theta\\[3pt]
-3\gamma\sqrt{2}\sin\alpha-\gamma\sqrt{6}\cos\theta
\end{array}\right).
$$
The above reduces to
\begin{equation}
\label{eq:PtToPtOnTheDisk}
\begin{split}
&\gamma\cos\alpha=\frac{1}{\sqrt{6}}\frac{5\sqrt{6}r\cos\theta+4}{4\sqrt{6}r\cos\theta+5}\\
&\gamma\sin\alpha=\frac{3r\sin\theta}{4\sqrt{6}r\cos\theta+5}.
\end{split}
\end{equation}
In particular,
\begin{equation}
\label{eq:Gamma-R}
\gamma^2-\frac{1}{6}=\frac{9\left(r^2-\frac{1}{6}\right)}{(4\sqrt{6}r\cos\theta+5)^2}
\end{equation}
so if $r\leq\frac{1}{\sqrt{6}}$, then $\gamma \leq \frac{1}{\sqrt{6}}$ and there is a solution $(\gamma, \alpha)$ satisfying \eqref{eq:PtToPtOnTheDisk}. But by $\eqref{eq:PtToPtOnTheDisk}$, we see that actually $(r,\theta)$ is also completely determined by $(\gamma, \alpha)$; it follows that the $B_j$ maps the disk onto itself injectively.\\

When we restrict the map on the boundary circle, we see from \eqref{eq:Gamma-R} that the circle is mapped onto itself and using the notations as before,
\begin{equation}
\label{eq:PolarBoundaryMap}
\begin{split}
&\cos\alpha=\frac{5\cos\theta+4}{4\cos\theta+5}\\
&\sin\alpha=\frac{3\sin\theta}{4\cos\theta+5}.
\end{split}
\end{equation}
These show that $\alpha$ corresponds to exactly one $\theta$; the map $g_j$, referring to the restriction of $B_j$ to the boundary circle, given by $g_j(\theta)=\alpha$ is well-defined. Differentiating both equations in \eqref{eq:PolarBoundaryMap}, we have $\frac{d g_0(\theta)}{d\theta}=\frac{3}{4\cos\theta+5}$. It follows that
$$g_0(\theta)=\int_0^\theta \frac{3dt}{4\cos t+5}=2\tan^{-1}\left(\frac{1}{3}\tan\frac{\theta}{2}\right)$$
and
\begin{equation*}
g_0^{-1}(\alpha)=2\tan^{-1}\left(3\tan\frac{\alpha}{2}\right).
\end{equation*}
Since $g_1$ and $g_2$ differ from $g_0$ a rotation, we have
\begin{equation*}
\begin{split}
g_1(\theta)=2\tan^{-1}\left(\frac{1}{3}\tan\left(\frac{\theta}{2}-\frac{\pi}{3}\right)\right)+\frac{2\pi}{3}\\
g_1^{-1}(\alpha)=2\tan^{-1}\left(3\tan\left(\frac{\alpha}{2}-\frac{\pi}{3}\right)\right)+\frac{2\pi}{3}\\
g_2(\theta)=2\tan^{-1}\left(\frac{1}{3}\tan\left(\frac{\theta}{2}+\frac{\pi}{3}\right)\right)-\frac{2\pi}{3}\\
g_2^{-1}(\alpha)=2\tan^{-1}\left(3\tan\left(\frac{\alpha}{2}+\frac{\pi}{3}\right)\right)-\frac{2\pi}{3}.
\end{split}
\end{equation*}
We have the following theorem.
\begin{thm}
\label{thm:BCircleAngle}
The mappings $B_j$ $(j=0,1,2)$ map the disk $\overline{U}$ centered at $(\frac{1}{3}, \frac{1}{3}, \frac{1}{3})$ of radius $\frac{1}{\sqrt{6}}$ onto itself injectively. Further,\\
\begin{itemize}
\item[(i)] the mappings map the boundary circle $C$ centered at $(\frac{1}{3}, \frac{1}{3}, \frac{1}{3})$ of radius $\frac{1}{\sqrt{6}}$ onto itself; and
\item[(ii)] If $\Psi$ is the function in \eqref{eq:parametrizaton} which associates each $(\frac{1}{\sqrt{6}},\theta)\in\R$ to $C$, there are differentiable maps $g_j$ satisfying $B_j(\Psi(\frac{1}{\sqrt{6}},\theta))=\Psi (\frac{1}{\sqrt{6}}, g_j(\theta))$ given by
    \begin{equation*}
    \begin{split}
    g_0(\theta)&=2\tan^{-1}\left(\frac{1}{3}\tan\frac{\theta}{2}\right)\\
    g_1(\theta)&=2\tan^{-1}\left(\frac{1}{3}\tan\left(\frac{\theta}{2}-\frac{\pi}{3}\right)\right)+\frac{2\pi}{3}\\
    g_2(\theta)&=2\tan^{-1}\left(\frac{1}{3}\tan\left(\frac{\theta}{2}+\frac{\pi}{3}\right)\right)-\frac{2\pi}{3}
    \end{split}
    \end{equation*}
\item[(iii)] Each $B_j$ has exactly two fixed points on the circle. The fixed points are, in ``polar coordinates", $\Psi(\frac{1}{\sqrt{6}},0)$, $\Psi(\frac{1}{\sqrt{6}},\pi)$ for $B_0$, $\Psi(\frac{1}{\sqrt{6}},\frac{2\pi}{3})$, $\Psi(\frac{1}{\sqrt{6}},\frac{-\pi}{3})$ for $B_1$ and $\Psi(\frac{1}{\sqrt{6}},\frac{-2\pi}{3})$, $\Psi(\frac{1}{\sqrt{6}},\frac{\pi}{3})$ for $B_2$.
\end{itemize}
\end{thm}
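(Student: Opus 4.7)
The plan is to consolidate the polar-coordinate computation already carried out in the paragraphs immediately preceding the statement; essentially the whole theorem can be read off from the three equations \eqref{eq:PtToPtOnTheDisk}, \eqref{eq:Gamma-R} and \eqref{eq:PolarBoundaryMap}.

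First, for the claim that each $B_j$ carries $\overline U$ bijectively onto itself, I would use the cyclic symmetry of $(M_0,M_1,M_2)$ to reduce to the case $B_0$. Rescaling via $u=\sqrt{6}\,r\cos\theta$, $v=\sqrt{6}\,r\sin\theta$, $U=\sqrt{6}\,\gamma\cos\alpha$, $V=\sqrt{6}\,\gamma\sin\alpha$, the system \eqref{eq:PtToPtOnTheDisk} reads
\[
U=\frac{5u+4}{4u+5},\qquad V=\frac{3v}{4u+5},
\]
and the denominator $4u+5$ is bounded below by $1$ on $\overline U$, so the first equation is a monotonic Möbius bijection of $[-1,1]$ to itself and the second is then linear in $v$. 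This produces an explicit inverse. Equation \eqref{eq:Gamma-R}, which rewrites as $U^2+V^2\le 1$ iff $u^2+v^2\le 1$ (with equality on the boundaries), then guarantees that the map sends $\overline U$ onto $\overline U$ and preserves the boundary circle. Part (i) is just the equality case of this.

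For (ii), the strategy is to differentiate \eqref{eq:PolarBoundaryMap} implicitly and substitute $\sin\alpha=\frac{3\sin\theta}{4\cos\theta+5}$ to obtain $g_0'(\theta)=\frac{3}{4\cos\theta+5}>0$, so $g_0$ is an orientation-preserving diffeomorphism of the boundary circle. Since $g_0(0)=0$, integrating with the Weierstrass substitution $u=\tan(\theta/2)$ gives $g_0(\theta)=2\tan^{-1}\bigl(\tfrac{1}{3}\tan(\theta/2)\bigr)$. Conjugating by the rotations through $\pm 2\pi/3$ yields the formulas for $g_1$ and $g_2$.

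For (iii), I would hunt for fixed points of $g_0$ on the circle by restricting to $\theta\in(-\pi,\pi)$, on which both sides of $g_0(\theta)=\theta$ lie in $(-\pi,\pi)$; the equation then reduces to $\tan(\theta/2)=\tfrac{1}{3}\tan(\theta/2)$, whose unique solution is $\theta=0$. The only remaining fixed point on the circle is $\theta=\pi$, obtained by taking $\tan(\theta/2)\to\pm\infty$ in the formula and noting that the two one-sided limits agree modulo $2\pi$. Applying the $\pm 2\pi/3$ rotations gives the fixed points of $g_1$ and $g_2$ stated. The chief potential pitfall is handling branches of $\tan^{-1}$ when counting fixed points on the circle rather than on an arc, and keeping track of signs when checking that the inverse map defined above actually lands back inside $\overline U$; otherwise the argument is a routine consolidation of the formulas already displayed in the text.
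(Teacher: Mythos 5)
Your consolidation of the preceding computations matches the paper's own proof for the disk bijection and for parts (i) and (ii): the paper likewise reads these off from \eqref{eq:PtToPtOnTheDisk}, \eqref{eq:Gamma-R} and \eqref{eq:PolarBoundaryMap}, obtains $g_0'(\theta)=\frac{3}{4\cos\theta+5}$ by differentiating \eqref{eq:PolarBoundaryMap}, and integrates to get $g_0(\theta)=2\tan^{-1}\left(\frac{1}{3}\tan\frac{\theta}{2}\right)$; your rescaled form $U=\frac{5u+4}{4u+5}$, $V=\frac{3v}{4u+5}$ is exactly the paper's expression for $B_0$ in Theorem \ref{thm:GeoOfMaps}, and the sign identity $U^2+V^2-1=\frac{9(u^2+v^2-1)}{(4u+5)^2}$ together with the explicit inverse is the same mechanism the paper uses for injectivity, surjectivity, and preservation of the boundary circle.

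The one place you diverge is (iii), and there is a gap there relative to what the paper proves. The paper treats the two circle fixed points as immediate from the formula for $g_0$ and devotes its entire displayed proof to showing that $B_0$ has \emph{no} fixed point in the open disk: a fixed point $\Psi(r,\theta)$ with $r<\frac{1}{\sqrt{6}}$ would force $\cos^2\theta=\frac{1}{6r^2}>1$ from the first coordinate of \eqref{eq:RthetaMap}. Your argument counts fixed points of $g_0$ on the boundary circle only and never addresses the interior, so if (iii) is read, as the paper's own proof indicates it should be, as ``exactly two fixed points in $\overline{U}$, both lying on the circle,'' your proof is incomplete. The repair is one line in your own coordinates: a fixed point satisfies $U=u$, i.e.\ $\frac{5u+4}{4u+5}=u$, hence $u^2=1$, hence $u=\pm1$ and therefore $v=0$, so every fixed point of $B_0$ in $\overline{U}$ already lies on the boundary circle, where your count applies.
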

\begin{proof}
We have already proved (i) and (ii). For (iii), we only need to prove for $B_0$ and the other two follow. By (ii), $\Psi(\frac{1}{\sqrt{6}},0)$, $\Psi(\frac{1}{\sqrt{6}},\pi)$ are the only fixed points on the boundary circle. So it suffices to show that there is no fixed point in the open disk. Suppose, on the contrary, we have such a fixed point, say $\Psi(r, \theta)$. Then we must have
$$\frac{1}{6}\frac{10\sqrt{6}r\cos\theta+8}{4\sqrt{6}r\cos\theta+5}=\frac{2r}{\sqrt{6}}\cos\theta$$
which shows, since $r<\frac{1}{\sqrt{6}}$, $$\cos^2\theta=\frac{1}{6r^2}>1$$
but this contradicts the fact that $|\cos\theta|\leq 1$.
\end{proof}
By (ii), We can regard $B_j$ and $g_j$ on the boundary circle as the same map under the parametrization \eqref{eq:parametrizaton}.\\

If we directly view the maps $B_j$ as maps on the unit disk, we have an easy description on how they act on the unit disk.
\begin{thm}
\label{thm:GeoOfMaps}
Using the notation $(r,\theta)$ and $(\gamma,\alpha)$ as before. Let $(x, y)=\\(\sqrt{6}r\cos\theta, \sqrt{6}r\sin\theta)$ and $(x',y')=(\sqrt{6}\gamma\cos\theta, \sqrt{6}\gamma\sin\theta)$ be on the unit disk. Then $B_0$ maps every vertical line to a vertical line and every point $(x,y)$ with $x>-\frac{1}{2}$ closer the boundary circle. In other words, if we denote $T$ to be the triangle with vertices $(1,0)$, $(-\frac{1}{2}, \frac{\sqrt{3}}{2})$, $(-\frac{1}{2}, \frac{-\sqrt{3}}{2})$, then every point in $T$ is mapped closer to the boundary by all of the three mappings $B_j$ and every point outside $T$ is mapped closer to the boundary by two of the mappings $B_j$.
\end{thm}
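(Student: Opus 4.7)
The plan is to convert the action of $B_0$ into the $(x,y)$ coordinates, read off a short formula for $x'$ and $y'$, compute the sign of $x'^2+y'^2-(x^2+y^2)$, and then deduce the triangle statement by an $S_3$-symmetry argument.

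I would start by multiplying both equations of \eqref{eq:PtToPtOnTheDisk} by $\sqrt 6$ to obtain
$$x'=\frac{5x+4}{4x+5},\qquad y'=\frac{3y}{4x+5}.$$
Since $x'$ depends only on $x$, $B_0$ carries each vertical line to a vertical line, which is the first assertion. Next, to test when the image is closer to the boundary I would use \eqref{eq:Gamma-R}; after multiplying by $6$, it reads $x'^2+y'^2-1=9(x^2+y^2-1)/(4x+5)^2$. Subtracting $x^2+y^2-1$ and using the factorization $9-(4x+5)^2=-8(2x+1)(x+2)$ gives
$$x'^2+y'^2-(x^2+y^2)=\frac{8(1-x^2-y^2)(2x+1)(x+2)}{(4x+5)^2}.$$
On the open disk, the factors $1-x^2-y^2$ and $x+2$ are positive, so the sign equals that of $2x+1$; hence $B_0$ pushes $(x,y)$ strictly closer to the boundary if and only if $x>-\tfrac12$.

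To obtain the analogous criteria for $B_1$ and $B_2$, I would invoke the $S_3$-symmetry. The matrices $M_0,M_1,M_2$ of Theorem \ref{thm:MMatrices} are conjugates of one another by cyclic permutation of the three coordinates on $\{x_0+x_1+x_2=1\}$, and under the parametrization \eqref{eq:parametrizaton} this permutation acts as rigid rotation by $\pm 2\pi/3$ about $(\tfrac13,\tfrac13,\tfrac13)$. Since the recursion $b^{(w)}\mapsto b^{(wi)}$ is equivariant under this symmetry, $B_1$ and $B_2$ are rotation conjugates of $B_0$; the half-plane $\{x>-1/2\}$ in which $B_0$ moves points outward is correspondingly rotated by $\pm 2\pi/3$ for $B_1$ and $B_2$.

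Finally I would read off the geometric picture. The line $\{x=-1/2\}$ is the edge of $T$ opposite the vertex $(1,0)$, and $\{x>-1/2\}$ is the side of that edge containing $(1,0)$; rotating by $\pm 2\pi/3$ yields the analogous half-planes bounded by the remaining two edges of $T$, each containing its opposite vertex. Since $T$ is the intersection of these three half-planes inside the disk, every point of $T$ is pushed outward by all three $B_j$. A point of the open disk outside $T$ lies in one of three curvilinear triangles bounded by one edge of $T$ and one arc of the unit circle; such a point sits on the wrong side of exactly one edge of $T$ and on the right side of the other two, so exactly two of the $B_j$'s move it closer to the boundary. The main obstacle is really the symmetry step rather than the algebra: one has to verify carefully that cyclic permutation of $(b_0,b_1,b_2)$ corresponds under \eqref{eq:parametrizaton} to $2\pi/3$-rotation in $(x,y)$ (a direct calculation, e.g.\ by checking that the radial directions $\theta=0,\,2\pi/3,\,-2\pi/3$ point toward $(1,0,0),\,(0,1,0),\,(0,0,1)$ respectively), so that the sign conventions align. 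Step two is then a short computation once $9-(4x+5)^2$ is factored.
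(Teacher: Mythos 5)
Your proposal is correct and follows essentially the same route as the paper: express $B_0$ in the $(x,y)$ coordinates as $\bigl(\frac{5x+4}{4x+5},\frac{3y}{4x+5}\bigr)$, read off the vertical-line statement, use \eqref{eq:Gamma-R} to see that the distance to the boundary circle strictly decreases exactly when $x>-\frac12$, and dispose of $B_1,B_2$ and the triangle $T$ by the $\frac{2\pi}{3}$-rotation symmetry. Your explicit factorization $9-(4x+5)^2=-8(2x+1)(x+2)$ and the spelled-out check that a point outside $T$ violates exactly one of the three half-plane conditions are just slightly more detailed versions of what the paper does.
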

\begin{proof}
The map $B_0$ on the unit disk is of the form
$$B_0(x,y)=(x',y')=\left(\frac{5x+4}{4x+5}, \frac{3y}{4x+5}\right)$$
Fix a constant $c\in [-1, 1]$. If $x=c$, then $x'=\frac{5c+4}{4c+5}$; so $B_0'$ maps every vertical line to a vertical line. Also, by \eqref{eq:Gamma-R}, we have
$$\gamma^2-\frac{1}{6}=\frac{9}{(4x+5)^2}\left(r^2-\frac{1}{6}\right);$$
thus if $x>-\frac{1}{2}$, $\frac{9}{(4x+5)^2}<1$ and hence
$$\gamma^2-\frac{1}{6}<\frac{9}{(4x+5)^2}\left(r^2-\frac{1}{6}\right).$$
The last assertion comes from the symmetry of $B_j$, since all of these mappings differ by a rotation of $\frac{2\pi}{3}$ and $\frac{-2\pi}{3}$ only.
\end{proof}

By Theorem \ref{thm:BCircleAngle}, we can say more about the measure mentioned in Conjecture \ref{conj1}. We call the measure $d\lambda(t)=f(t)dt$ for some $f$, $\lambda[a,b]=\lambda(B_j[a,b])$ for all $j=0,1,2$. Recall that it is $(B_0, B_1, B_2)$-invariant, hence
\begin{equation*}
\begin{split}
\int_a^b f &= \int_{g_i^{-1}[a,b]} f(t) dt\\
 &= \int_a^b f(g_i^{-1}(t))(g_i^{-1})'(t)dt\\
 &=\int_a^b \frac{f(g_i^{-1}(t))}{g_i'(g_i^{-1}(t))}dt\\
 &=\frac{1}{3}\sum_{j=0}^2 \int_a^b \frac{f(g_j^{-1}(t))}{g_j'(g_j^{-1}(t))}dt.
\end{split}
\end{equation*}
It follows that the function $f$ can be characterized by one satisfying the relation
$$f(t)=\frac{1}{3}\sum_{j=0}^2 \frac{f(g_j^{-1}(t))}{g_j'(g_j^{-1}(t))}.$$
If we consider $g_j$ separately, we have
\begin{equation*}
\begin{split}
\frac{1}{g_0'(g_0^{-1}(t))}&=\frac{4}{3}\cos\left(2\tan^{-1}\left(3\tan\frac{t}{2}\right)\right)+\frac{5}{3}=\frac{3}{5-4\cos t};\\
\frac{1}{g_1'(g_1^{-1}(t))}&=\frac{3}{5-4\cos \left(t-\frac{2\pi}{3}\right)};\\
\frac{1}{g_2'(g_2^{-1}(t))}&=\frac{3}{5-4\cos \left(t+\frac{2\pi}{3}\right)}.
\end{split}
\end{equation*}
%Now, sum them up gives
%$$\sum_{j=0}^2 \frac{1}{g_j'(g_j^{-1}(t))}=\frac{189}{130-32\cos 3t}$$
%which, expressed in Fourier Series, has the form
%$$\frac{189}{130-32\cos 3t}=\frac{3}{2}+\sum_{n=1}^\infty \frac{3}{8^n}\cos 3nt.$$

Finally, we give further experimental evidence for the conjectures. Conjecture \ref{conj2} states that the radial distribution is the delta measure at $r=\frac{1}{\sqrt{6}}$; the angular distribution on the whole disk should be the angular distribution on the boundary circle only. Pick three points $(0,1)$, $(-1, 0)$, $(\frac{1}{\sqrt{2}}, -\frac{1}{\sqrt{2}})$ from the three arcs mentioned in Theorem \ref{thm:GeoOfMaps} respectively and iterate them $14$ times by all of the three maps. We cut the whole circle into 800 bins and plot the histogram for the number of points (normalized so that the average over all the values is $1$) in each bin for bins covering one sixth of the circle in Figure \ref{1sixth800slicesNormalized}. This should be compared to Figure \ref{lvl13slice100} which gives the level 13 histogram.

\begin{figure}
        \centering
        \includegraphics[height = 3 in]{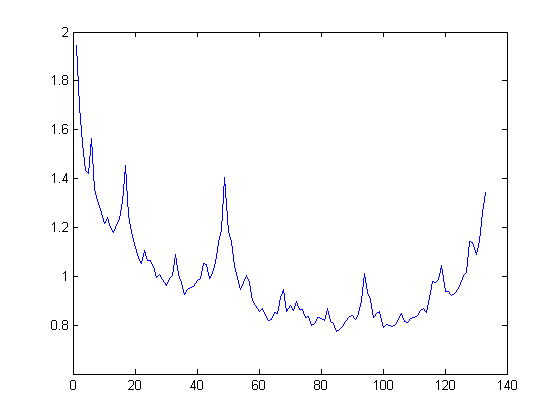}
        \caption{Angular Distribution on Boundary}
        \label{1sixth800slicesNormalized}
\end{figure}

\section{Acknowledgement}
We are grateful to the referee for many useful suggestions.

\end{document}